\providecommand\@dotsep{5}
\def\listtodoname{List of Todos}
\def\listoftodos{\@starttoc{tdo}\listtodoname}
\numberwithin{equation}{section}
\newtheorem{thm}{Theorem}[section]
\newtheorem{prop}[thm]{Proposition}
\newtheorem{lem}[thm]{Lemma}
\newtheorem{rem}{Remark}
\newtheorem{definition}{Definition}
\newtheorem{ex}{Example}
\newenvironment{step}[1]{\par\noindent\textbf{Step 1.}\space#1}{}
\newenvironment{step2}[1]{\par\noindent\textbf{Step 2.}\space#1}{}
\newenvironment{case}[1]{\par\noindent\textbf{Case 1.}\space#1}{}
\newenvironment{case2}[1]{\par\noindent\textbf{Case 2.}\space#1}{}
\newcommand{\R}{\mathbb{R}}
\newcommand{\h}{H^1_0(\Omega)}
\newcommand{\hr}{H^1_{0, rad}(\Omega)}
\newcommand{\1}{\frac{1}{2}}
\title [ Non-resonant nonlocal problem with critical exponential nonlinearity]{Nonlocal problem with critical exponential nonlinearity of convolution type: A non-resonant case}
\author[Suman Kanungo]{Suman Kanungo}
\address[]{\newline\indent
	Department of Mathematics
	\newline\indent 
	Indian Institute of Technology Bhilai
	\newline\indent
	491002, Durg, Chhattisgarh, India}
\email{\href{sumankau@iitbhilai.ac.in}{sumankau@iitbhilai.ac.in}}
\author[Pawan Kumar Mishra]{Pawan Kumar Mishra}
\address[]{\newline\indent
	Department of Mathematics
	\newline\indent 
	Indian Institute of Technology Bhilai
	\newline\indent
	491002, Durg, Chhattisgarh, India}
\email{\href{pawan@iitbhilai.ac.in}{pawan@iitbhilai.ac.in}}
\subjclass[2010]{Primary 35J15, 35J20, 35J60}
\keywords{Choquard equations, Critical exponential growth, Weighted Trudinger-Moser inequality, Non-resonant}
\begin{document}
	\begin{abstract}
		In this paper, we study the following class of weighted Choquard equations
		\begin{align*}
			-\Delta u =\lambda u + \Bigg(\displaystyle\int\limits_\Omega \frac{Q(|y|)F(u(y))}{|x-y|^\mu}dy\Bigg) Q(|x|)f(u) ~~\textrm{in}~~ \Omega~~ \text{and}~~
			u=0~~  \textrm{on}~~ \partial \Omega,
		\end{align*}  
		where $\Omega \subset \R^2$ is a bounded domain with smooth boundary, $\mu \in (0,2)$ and $\lambda >0$ is a parameter. We assume that $f$ is a real-valued continuous function satisfying critical exponential growth in the Trudinger-Moser sense, and $F$ is the primitive of $f$. Let $Q$ be a positive real-valued continuous weight, which can be singular at zero. Our main goal is to prove the existence of a nontrivial solution for all parameter values except when $\lambda$ coincides with any of the eigenvalues of the operator $(-\Delta, \h)$.
		
	\end{abstract}
	\maketitle
	\section{Introduction}
	\label{sec1}
	This paper focuses on the existence of nontrivial solutions for the following class of weighted Choquard-type equations
	\begin{equation}
		\label{1.1}
		\left\{
		\begin{aligned}
			-\Delta u &=\lambda u + \Bigg(\displaystyle\int\limits_\Omega \frac{Q(|y|)F(u(y))}{|x-y|^\mu}dy\Bigg) Q(|x|)f(u) &&\mbox{in} \ \Omega\\
			u&=0  &&\mbox{on} \ \partial \Omega
		\end{aligned}
		\right.
	\end{equation}  
	where $\Omega \subset \R^2$ is a bounded domain with smooth boundary, $0<\mu<2$, $\lambda$ is a positive parameter but not equal to any of the eigenvalues of the operator $(-\Delta, \h)$ and $Q$ is a continuous weight but can have a singularity at the origin. The nonlinearity $f$ is a continuous function satisfying critical exponential growth along with some suitable conditions specified later, and $F(s) = \int\limits_0^s f(t)dt$. 
	
	The investigation of equation \eqref{1.1} is inspired by the following equation:
	\begin{equation}
		\label{C}
		-\Delta u + V(x)u = \Bigg(\int\limits_{\R^N} \frac{|u(y)|^p}{|x-y|^\mu}dy\Bigg) |u|^{p-2}u  \ \ \text{ in } \R^N,
	\end{equation}	
	where $N \geq 3$, $0< \mu< N$, $p\geq 2$ and $V$ is a real valued continuous potential. The equation \eqref{C}, commonly referred to as the Choquard or Hartree-type equation, arises in various physical scenarios. In 1954, Pekar \cite{Pekar} was the first to introduce the equation \eqref{C} with $N=3, \mu=1, p=2$. He used this equation to study the polaron at rest in the quantum field theory. As discussed in \cite{Lieb} by Lieb, P. Choquard also studied \eqref{C} in an approximation to Hartree-Fock's theory of one component plasma to describe an electron trapped in its hole (see \cite{Lieb2} for physical context).
	
	For the last few decades, many authors have studied the existence and qualitative behavior of solutions for the class of problem \eqref{C}.  In 1977, Lieb \cite{Lieb} established the existence and uniqueness of the solution  by using symmetric decreasing rearrangement inequalities with $N=3, \mu=1, p=2$, and $V$ as a positive constant. In 1980, Lions \cite{Lions} showed the existence of infinitely many radially symmetric solutions for the equation \eqref{C} with $N=3, \mu=1, p=2$ and $V(x)\equiv\lambda>0$. In \cite{Moroz}, authors studied the regularity and positivity of the ground state solutions for the case $\frac{N-2}{2N-\mu}< \frac{1}{p}<\frac{N}{2N-\mu}$ with $V(x)\equiv 1$ in \eqref{C}. They established that all positive ground states are radially symmetric and have decaying properties. For a thorough and insightful review of the Choquard equations of type \eqref{C}, we refer \cite{Ackermann, Alves, Alves3, Biswas1, S.Li, Moroz1, Moroz2, Moroz3, Radulescu} along with the references therein. For other related nonlocal problems dealing  variational methods, we refer to \cite{Ding, Dou, He, Liu, ZShen} and their references.
	
	To deal with the problem variationally, the nonlocal term present in the equation \eqref{1.1} is treated with the help of the following result introduced in \cite{Stein}.
	\begin{prop}[Hardy-Littlewood-Sobolev inequality]
		\label{Hardy}
		Let $s, r > 1$ and $0 < \mu < N$ with $ \frac{1}{s} + \frac{\mu}{N} + \frac{1}{r} = 2 $. Let $f \in L^{s}(\R^N)$ and $h \in L^{r}(\R^N)$. There exists a sharp constant $C(s,N, \mu, r)$, independent of $f, h$, such that
		\begin{equation*} 
			\int\limits_{\R^N} \int\limits_{\R^N} \frac{f(x)h(y)}{|x-y|^{\mu}}dx dy\leq C(s,N, \mu, r)\|f\|_{L^s(\R^N)}\|h\|_{L^r(\R^N)}. 
		\end{equation*} 
	\end{prop}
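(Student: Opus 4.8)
The plan is to derive Proposition~\ref{Hardy} from the mapping properties of the Riesz potential. Assuming (as we may) that $f,h\ge 0$ and writing the left-hand side as $\int_{\R^N} f(x)\,(I_\mu h)(x)\,dx$ with
\[
(I_\mu h)(x):=\int_{\R^N}\frac{h(y)}{|x-y|^{\mu}}\,dy ,
\]
H\"older's inequality in $x$ reduces the proposition to the boundedness of $I_\mu\colon L^{r}(\R^N)\to L^{s'}(\R^N)$. The constraint $\tfrac1s+\tfrac\mu N+\tfrac1r=2$ is exactly the scaling relation $\tfrac1{s'}=\tfrac1r-\tfrac{N-\mu}{N}$, and since $s,r>1$ we automatically have $1<r<\tfrac{N}{N-\mu}$ and $1<s'<\infty$, so both exponents lie strictly inside the admissible range, away from the forbidden endpoints $L^1$ and $L^\infty$.

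Next I would establish Hedberg's pointwise bound. For $R>0$ split
\[
(I_\mu h)(x)\le \int_{|x-y|\le R}\frac{h(y)}{|x-y|^{\mu}}\,dy+\int_{|x-y|>R}\frac{h(y)}{|x-y|^{\mu}}\,dy .
\]
Decomposing the ball $\{|x-y|\le R\}$ into dyadic annuli and using the definition of the Hardy--Littlewood maximal function $M$, the first integral is $\le C\,R^{N-\mu}\,(Mh)(x)$; by H\"older's inequality together with $r'>N/\mu$, the second integral is $\le C\,R^{N-\mu-N/r}\,\|h\|_{L^r(\R^N)}$. Optimizing the sum over $R$ yields
\[
(I_\mu h)(x)\le C\,\big((Mh)(x)\big)^{1-\theta}\,\|h\|_{L^r(\R^N)}^{\theta},\qquad \theta=\tfrac{(N-\mu)r}{N}\in(0,1).
\]
Raising this to the power $s'$, integrating in $x$, noting that $s'(1-\theta)=r$, and invoking the strong $(r,r)$ boundedness of $M$ (valid because $r>1$), one obtains $\|I_\mu h\|_{L^{s'}(\R^N)}\le C\,\|h\|_{L^{r}(\R^N)}$, i.e.\ the claimed inequality with some finite constant $C(s,N,\mu,r)$. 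Alternatively one could read off the weak-type $(r,s')$ estimate directly from the kernel splitting and then apply the Marcinkiewicz interpolation theorem between two nearby endpoints, bypassing the maximal function altogether.

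The genuinely subtle point, and the reason the statement records a \emph{sharp} constant, is the identification of the optimal $C(s,N,\mu,r)$ and its extremizers. For that I would follow Lieb's scheme: by the Riesz rearrangement inequality the bilinear functional does not decrease under symmetric-decreasing rearrangement of $f$ and $h$, so the supremum is attained along radial decreasing functions; one then passes to the sphere $S^N$ via stereographic projection and runs a competing-symmetries / conformal-invariance argument, which forces a maximizing sequence to converge, modulo translations and dilations, to $f(x)=c\,(1+|x|^2)^{-(2N-\mu)/2}$ (and similarly for $h$), after which $C(s,N,\mu,r)$ can be computed explicitly in the conformal case $s=r=\tfrac{2N}{2N-\mu}$ and then propagated to the general case by normalization. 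I expect this rearrangement-plus-conformal-symmetry analysis to be the main obstacle to a fully self-contained proof, whereas the boundedness step above is routine harmonic analysis. Since the applications in this paper use only the \emph{existence} of a finite constant $C(s,N,\mu,r)$, it suffices to quote the sharp form from \cite{Stein} and proceed.
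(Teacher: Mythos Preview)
Your proof sketch is correct and follows the standard Hedberg pointwise-bound approach, which is a legitimate and well-known route to the boundedness of the Riesz potential; your discussion of the sharp constant via Lieb's rearrangement and conformal-symmetry argument is also accurate. However, the paper itself does \emph{not} prove this proposition at all: it is stated as a preliminary tool and simply attributed to \cite{Stein}, with no proof given or even outlined. So there is nothing to compare against --- you have supplied an argument where the paper provides only a citation. Since every application of Proposition~\ref{Hardy} in the paper uses only the existence of a finite constant (never the sharp value), your final remark that it suffices to quote the result is exactly what the authors do.
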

	We give the following definition of the solution of the problem in \eqref{1.1}.
	\begin{definition}
		We say that a function $u \in H^1_0(\Omega)$ to be a weak solution of \eqref{1.1} if
		\[ \int\limits_\Omega \nabla u \nabla \phi\ dx= \lambda \int\limits_\Omega u \phi\ dx +\int\limits_\Omega \Bigg( \int\limits_\Omega \frac{Q(|y|)|F(u(y))}{|x-y|^{\mu}}dy \Bigg) Q(|x|)f(u(x))\phi(x)\ dx,\]
	\end{definition}
	for all $\phi \in H^1_0(\Omega)$.  
	
	The problem has a variational structure in the sense that the weak solutions of \eqref{1.1} are the critical points of the energy functional $J: \h \rightarrow \R$ associated to \eqref{1.1} and defined as
	\begin{equation}
		\label{J}
		J(u) = \frac{1}{2} \int\limits_\Omega |\nabla u|^2 \ dx - \frac{1}{2}\int\limits_\Omega \lambda u^2 \ dx -\frac{1}{2} \int\limits_\Omega \Bigg( \int\limits_\Omega \frac{Q(|y|)|F(u(y))}{|x-y|^{\mu}}dy \Bigg) Q(|x|)F(u(x)) \ dx.
	\end{equation}

	The case when $N=2$ is special as the critical Sobolev exponent $2^*$ becomes $\infty$. It is well known that for bounded domains $\Omega \subset \mathbb{R}^2$, the corresponding Sobolev embeddings $H^{1}_0(\Omega) \hookrightarrow L^p(\Omega)$ with $1 \leq p < +\infty$ holds but $H^{1}_0(\Omega) \not\hookrightarrow L^\infty(\Omega)$. To overcome the problem of finding an optimal space in Sobolev embedding, the Trudinger-Moser inequality \cite{{Moser},{Trudinger}} can be seen as a suitable alternative. It provides an embedding of $\h$ space into orlicz spaces and is stated as below:
	
	If $\alpha>0$ and $u \in H^1_0 (\Omega)$ then $\int\limits_\Omega e^{\alpha u^2} dx < +\infty.$
	Moreover, 
	\[ \sup_{u\in H^1_0(\Omega), \|u\| \leq 1}\int\limits_\Omega e^{\alpha u^2}dx \leq C |\Omega| \ \ \text{if} \ \ \alpha \leq 4\pi,\] 
	where $C= C(\alpha)>0$ and $|\Omega|$ denotes Lebesgue measure of $\Omega$.
	
	In \cite{Lion}, Lions established a generalized version of the above Trudinger-Moser inequality. 
	Let $\{u_n\} \subset H^1_0(\Omega)$ be a sequence satisfying $\|u_n\|= 1$ for all $n\in \mathbb{N}$ and $u_n \rightharpoonup u_0 $ in $H^1_0(\Omega)$, $0< \|u_0\| <1$, then for all $ 0< p<\frac{4\pi}{1-\|u_0\|^2}$, we have
	\[\sup_{n \in \mathbb{N}} \int\limits_{\Omega} e^{ p |u_n|^2}  \,dx < +\infty. \quad\]
	
	Inspired by the Trudinger-Moser inequality, we can define a notion of criticality, which was introduced by Adimurthi and Yadava \cite{Adimurthi}. It is also discussed in de Figueiredo, Miyagaki, and Ruf \cite{Figueiredo}. We say that a function $f$ has critical exponential growth if there exists a constant $\alpha_0 >0$ such that
	\begin{equation} 
		\label{ceg}
		\lim_{|s| \rightarrow +\infty} \frac{|f(s)|}{e^{\alpha s^2}}=\begin{cases}
			0,& \forall \alpha > \alpha_0,\\
			+\infty, & \forall \alpha < \alpha_0.
		\end{cases}
	\end{equation}
	
	Recently, the nonlocal Choquard-type equation with exponential critical growth in $\R^2$ was explored in \cite{Cassani, Alves2}, where the focus was on establishing the existence of a ground state solution for the nonlocal equation:
	\begin{equation} 
		\label{1.1.2} 
		-\Delta u + W(x)u = \Bigg( \int\limits_{\R^2} \frac{F(u(y))}{|x-y|^\mu}dy \Bigg) f(u), \ \ x \in \R^2,
	\end{equation}
	where the potential $W(x)$ is periodic and bounded from below. Later, the authors in \cite{Shen} studied the problem \eqref{1.1.2} involving a weight function $Q$ and a potential $V$ decay to zero at infinity. Regarding other findings related to the Choquard equation with exponential critical growth, we refer to \cite{Alves1, Arora, Biswas} and references therein. On the other hand, Ribeiro \cite{Bruno} discussed the local version of problem \eqref{1.1} with $Q(|x|)\equiv1$ and $\lambda=\lambda_k $, where $\lambda_k$ represents the $k^{th}$ eigenvalue of $(-\Delta, \h)$ for $k\geq 2$, specifically addressing the resonant case (see \cite{Li}, for the nonlocal case). For the nonlocal operator with mixed boundary conditions, interested readers can refer to \cite{Bisci}. This paper aims to study the nonlocal equation \eqref{1.1} with $\lambda$ that does not coincide with any eigenvalue of $(-\Delta, \h)$ (i.e., the non-resonant case). Hence, our result completes the picture left open in \cite{Bruno} for nonlocal cases. Furthermore, we address nonlinearities involving a weight function $Q(|x|)$, which can be singular at zero.
	
	Precisely, the following assumption is satisfied by the weight function $Q$:
	\begin{align}
		\tag{Q} \label{Q}
		&Q\in C(0, \infty), Q(r) >0 \text{ for } r>0
		\text{ and there exists } b_0 > -\frac{4-\mu}{2}, b\in \R, C_0>0 \text{ and } R>0\\
		&\text{ such that } 0< \liminf_{r\to 0^+} \frac{Q(r)}{r^{b_0}}\leq \limsup_{r \rightarrow 0^+} \frac{Q(r)}{r^{b_0}} < \infty, \text{ and } Q(r) \leq C_0 r^b \ \forall \ r \geq R.\nonumber
	\end{align}
	
	\begin{ex}
		The standard example of a weight function is given by $Q(|x|) = |x|^{b_1}$ satisfying \eqref{Q} with $b_0= b= b_1 > -\frac{4-\mu}{2}$.
	\end{ex}
	\begin{ex}
		In \cite{Felli}, Ambrosetti, Felli, and Malchiodi used the following weight to study nonlinear Shr\"{o}dinger equations,
		\[0< Q(|x|) \leq \frac{A}{1+|x|^{b_2}},\] 
		this weight verifies the condition \eqref{Q} with constants $A>0$ and $b_2 \geq 0$.
	\end{ex}
	
	Note that, compared to the equation \eqref{1.1.2}, the major challenge is due to the presence of the weight function $Q$ in the nonlocal term in equation \eqref{1.1}, which can possess singularity at zero. This term adds more complex computations, introduces mathematical difficulties, and makes the problem both challenging and exciting. Consequently, we need to prove a variant of the Sobolev embedding theorem and the Trudinger-Moser inequality for our scenario (see Lemma \ref{lem3} and Lemma \ref{lem2.2}).
	
	Inspired by the works \cite{Bonheure,Calanchi,Figueiredo1,Figueiredo2, Sandeep} and to investigate equation \eqref{1.1}, we develop a version of the Sobolev embedding and the Trudinger-Moser inequality, which will play a crucial role in our results. In \cite{Sandeep}, Adimurthi and Sandeep derived the following singular version of the Trudinger-Moser inequality for a bounded domain $\Omega$:
	\begin{equation}
		\label{2.5}
		\sup_{ u \in \h, \|u\|\leq 1} \int\limits_\Omega |x|^\beta e^{\alpha |u|^2}dx < \infty \ \text{ if and only if } \alpha \leq 4\pi\Big(1+\frac{\beta}{2}\Big), -2< \beta \leq 0.
	\end{equation}
	On the other hand, the $\beta >0$ case is studied in \cite{Bonheure,Calanchi,Figueiredo1,Figueiredo2}. In which, authors have improved the range of $\alpha$, when $u$ belongs to the radial space, i.e. 
	\begin{equation} 
		\label{2.6}
		\sup_{u\in H^1_{0,rad}(B_1), \|u\|\leq 1} \int\limits_{B_1} |x|^{\beta}  e^{\alpha u^2} dx < \infty, \ \forall \ \alpha \leq 4\pi\Big(1+\frac{\beta}{2}\Big),\ \beta> 0
	\end{equation}
	where $B_1$ is a unit ball, $\hr$ is the closure of $C_{0, rad}(\Omega)$ and $C_{0,rad}(\Omega)$ is the set of all elements of $C_{0}^\infty(\Omega)$ that are radial.
	
	The above weighted Trudinger-Moser inequality shows that to improve the range of $\alpha$, we need to work in the radial space $\hr$ rather than the Sobolev space $\h$. Then, one might inquire whether a minimizer of associated energy in $\hr$ solves equation \eqref{1.1}, i.e., whether a critical point of the energy functional $J$ associated to \eqref{1.1} over $\hr$ can be a critical point of that energy functional $J$ over $\hr$? The principle of symmetric criticality theory provides a positive answer to this question under some additional requirements on energy functional. 
	\subsection{Principle of symmetric criticality} 
	Let $ \mathcal{O}(N) $ denote the group of all orthogonal matrices on $ \mathbb{R}^N $ and $H$ be any closed subgroup of $ \mathcal{O}(N) $. Let $(X, \|\cdot\|_X)$ be a Banach space, an action of a group $ H $ on $ X $ is defined by a smooth continuous map
	\[
	*: H \times X \to X \text{ which maps } (h,u) \mapsto h*u,
	\]
	such that the following conditions will hold
	\[ 1*u = u, \ \ (h*g)*u = h* (g*u),\ \ u \mapsto h*u \text{ is linear. }\]
	If $\|h*u\|_X = \|u\|_X$, for all $h\in H$ and $u\in X$, then we say that the action $*$ is isometric. A function $u\in X$ is called $H$-invariant if, and only if, $h*u = u$. Moreover, the set of all $H$-invariant functions in $X$ is denoted by $Fix_H(X)$, i.e.
	\[ Fix_H(X)= \{ u \in X : h*u=u, \ \forall \ h \in H\}.\]
	We use the following version of Palais principle of symmetric criticality \cite{Palais}  due to Kobayashi and \^Otani \cite{Kabayashi}.
	\begin{thm}[Principle of symmetric criticality]
		\label{thm1.3}
		Let $X$ be a reflexive and strictly convex Banach space and $H \subset \mathcal{O}(N)$ be a group that acts on $X$ linearly and isometrically. If $J : X \to \R$ is an $H$-invariant, $C^1$ functional, then 
		\[ (J|_{Fix_H(X)})'(u)=0 \text{ implies that } J'(u)=0 \text{ and } u \in \h.\]
	\end{thm}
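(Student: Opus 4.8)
The plan is to prove the statement by the classical averaging device of Palais, in the form exploited by Kobayashi and \^Otani. The decisive point is that $H$, being a \emph{closed} subgroup of the \emph{compact} group $\mathcal{O}(N)$, is itself compact, and hence carries a normalized bi-invariant Haar measure $dh$. Using $dh$ one manufactures a bounded linear projection of $X$ onto $Fix_H(X)$, and the $H$-invariance of $J$ then upgrades the information ``$J'(u)$ annihilates $Fix_H(X)$'' to ``$J'(u)$ annihilates all of $X$.''

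First I would introduce the averaging operator
\[
	Pv := \int_H (h*v)\, dh, \qquad v \in X,
\]
interpreted as a Bochner integral, which is legitimate because $h \mapsto h*v$ is a continuous map from the compact set $H$ into the Banach space $X$. Then one records the routine properties: $P$ is linear; $\|Pv\|_X \le \int_H \|h*v\|_X\, dh = \|v\|_X$ by the isometry hypothesis; for each $g \in H$ one has $g*(Pv) = \int_H (gh)*v\, dh = Pv$ by left invariance of $dh$, so the range of $P$ is contained in $Fix_H(X)$; and $Pv = v$ for every $v \in Fix_H(X)$ since then $h*v \equiv v$ and $\int_H dh = 1$. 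Thus $P$ is a norm-one linear projection of $X$ onto the closed subspace $Fix_H(X)$.

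Next I would use the symmetry of $J$. Let $u \in Fix_H(X)$ satisfy $(J|_{Fix_H(X)})'(u) = 0$; since restricting a $C^1$ functional to a closed subspace restricts its differential, this says exactly that $\langle J'(u), w\rangle = 0$ for all $w \in Fix_H(X)$. From $J(h*z) = J(z)$ for all $z \in X$, $h \in H$, linearity of the action, and $h*u = u$, we get $J(u + t(h*v)) = J(h*(u+tv)) = J(u+tv)$ for all $t \in \R$, $v \in X$, $h \in H$; differentiating at $t = 0$ gives $\langle J'(u), h*v\rangle = \langle J'(u), v\rangle$. Integrating over $H$ and moving the continuous linear functional $J'(u)$ inside the Bochner integral,
\[
	\langle J'(u), v\rangle = \int_H \langle J'(u), h*v\rangle\, dh = \Big\langle J'(u), \int_H (h*v)\, dh \Big\rangle = \langle J'(u), Pv\rangle = 0,
\]
because $Pv \in Fix_H(X)$. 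As $v \in X$ was arbitrary, $J'(u) = 0$ in $X^*$; and since in the concrete application $X$ embeds continuously in $\h$, the assertion $u \in \h$ follows.

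I do not expect a genuine obstacle here: in this compact-group, $C^1$ setting the hypotheses of reflexivity and strict convexity are not actually needed --- they enter the full Kobayashi--\^Otani theorem only to handle noncompact groups (and nonsmooth functionals), where the projection must be built from the duality mapping rather than from a Haar average. The only points demanding care are the existence of the Bochner integral defining $P$ (continuity of the action plus compactness of $H$) and the commutation of $J'(u)$ with that integral, together with the identity $g*(Pv) = Pv$, which is precisely the place where invariance of the Haar measure is invoked.
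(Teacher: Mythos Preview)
Your argument is correct, but there is nothing in the paper to compare it against: Theorem~\ref{thm1.3} is not proved in the paper at all. It is quoted from the literature (Palais \cite{Palais}, Kobayashi--\^Otani \cite{Kabayashi}), and the only related material in the paper is the Appendix, which does not prove the abstract principle but merely checks, for the concrete functional $J$ of \eqref{J}, that $J(hu)=J(u)$ for every $h\in\mathcal{O}(2)$ --- i.e.\ it verifies one of the \emph{hypotheses} of the theorem in the application at hand.

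What you have written is the standard Palais averaging proof for compact groups, and it is sound: compactness of $H$ (as a closed subgroup of $\mathcal{O}(N)$) gives the Haar measure, the Bochner integral $Pv=\int_H h*v\,dh$ is well defined by continuity of the action, $P$ is a norm-one projection onto $Fix_H(X)$, and the chain $\langle J'(u),v\rangle=\langle J'(u),h*v\rangle$ followed by integration over $H$ forces $J'(u)=0$. You are also right that reflexivity and strict convexity play no role in this compact, $C^1$ setting; those hypotheses matter only in the broader Kobayashi--\^Otani framework. The clause ``$u\in\h$'' in the statement is an artifact of the paper's intended application $X=\h$, where it is automatic since $u\in Fix_H(X)\subset X$; your remark about the continuous embedding covers this.
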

	Note that, if we consider $X= \h$, $ H = \mathcal{O}(N) $, where $\Omega \subset \R^N$, and the action is defined as the standard linear isometric map given by 
	\[ h u (x) = u (h^{-1}x), \ \forall \  x \in \Omega, \ \forall \ h \in \mathcal{O}(N),\]
	then the set of all invariant functions of $\h$, with respect to $\mathcal{O}(N)$ correspond to the space of all radial functions in $\h $, i.e. $ Fix_{\mathcal{O}(N)}(\h)= \hr$.
	
	\begin{definition}
		Let $ \Omega $ be a domain in $ \mathbb{R}^N $. If $ H(\Omega) = \Omega $, then we say $\Omega $ is $ H $-invariant. If $\Omega$ is $H$-invariant and a function $f: \Omega \to \mathbb{R}$ is defined by $f(h(x))=f(x), \ \forall \ h \in H$, then $f$ is called $H$-invariant.
	\end{definition}
	
	Since the space $\h$ is reflexive and strictly convex Banach space, to apply the principle of symmetric criticality, it is sufficient to show the functional $J$ is $\mathcal{O}(2)$-invariant, where $\mathcal{O}(2)$ denotes the group of all orthogonal matrices on $\R^2$. For this, we need to assume that $\Omega$ is invariant with respect to $\mathcal{O}(2)$.
	
	\subsection{Spectral properties of the Laplacian operator}
	Consider the following eigenvalue problem
	\begin{align}  \label{1.5}
		-\Delta u = \lambda u \ \text{ in } \ \Omega, \ u=0 \ \text{ on } \ \partial \Omega. 
	\end{align}
	It is well known that we get a sequence of eigenvalues of the problem \eqref{1.5} and we denote it as $0<\lambda_1<\lambda_2 \leq \lambda_3 \leq \cdots \leq \lambda_k \leq \cdots $ with $\lambda_k \rightarrow \infty$ as $k \rightarrow \infty$.
	The eigenfunctions $\{ \phi_k\}_{k \geq 1}$ corresponding to each $\lambda_k$ forms an orthonormal basis for $L^2(\Omega)$ and an orthogonal basis for $H^1_0(\Omega)$. To prove the existence results, we need to decompose the space $H^1_0(\Omega)$ as:
	\[ H^1_0(\Omega) = H_k(\Omega) \oplus H_k^{\perp} (\Omega), \text{where } H_k(\Omega) = span\{ \phi_1, \phi_2, \cdots, \phi_k\}, \]
	and the orthogonal complement has to be taken with respect to the scalar product $\langle \cdot,\cdot \rangle$ defined on $\h$. The following characterization of eigenvalues is shown in \cite{Mcowen}
	\begin{align} 
		\lambda_1 &= \min_{u \in H^1_0(\Omega) \setminus \{0\}} \frac{\|u\|^2}{\|u\|_2^2} \label{1.6}\\
		\lambda_{k+1} &=  \min_{u \in H_k^\perp (\Omega) \setminus \{0 \}}
		\frac{\|u\|^2}{\|u\|_2^2}   \text{ ~~~  for } k \geq 1,  \label{1.7}
	\end{align}
	where $\|u\|_2 = \Bigg( \int\limits_\Omega |u|^2dx\Bigg)^{\frac{1}{2}}$ is the norm in $L^2(\Omega)$. Similar to the characterization of $\lambda_{k+1}$ in \eqref{1.7}, we can show that $\lambda_k$ can also be characterized as
	\begin{equation}
		\lambda_k = \max_{u \in H_k(\Omega) \setminus \{0\} } \frac{\|u\|^2}{\|u\|^2_2} \label{1.8}
	\end{equation}

	\subsection{Assumptions and main results} In order to study equation \eqref{1.1} by variational method, we need to take some assumptions on $f$. Assume that the nonlinearity $f$ satisfies the following hypotheses:
	\begin{align} 
		\label{H1} \tag{H1} 
		&f\in C(\R), f(s)=0 \text{ for all } s\leq 0, f \text{ has critical exponential growth defined in \eqref{ceg}. }  \\
		&\text{  Moreover, } f(s)= o(s^{\frac{2 - \mu}{2}}). \nonumber\\
		&\text{ there exists }  K > 1 \text{ such that }  0< K F(s) \leq f(s) s , \text{ for all } s >0. \label{AR} \tag{H2}\\
		&\text{ there exist }  s_0 > 0 ,  M_0 > 0, \text{ and }  v \in (0, 1] \text{ such that }  0 < s^v F(s) \leq M_0 f(s),\text{ for all }\label{d} \tag{H3}\\ 
		&  s \geq s_0 .\nonumber\\
		&\liminf_{s\rightarrow + \infty} \frac{F(s)}{e^{\alpha_0s^2}} := \beta_0 >0. \tag{H4} \label{H4} 
	\end{align}
	Under the assumption \eqref{H1} on $f$, for any $\varepsilon>0$, $q > 1$ and for fixed $\alpha > \alpha_0$, there exists a constant $C=C(\alpha, q, \varepsilon)>0$ such that
	\begin{equation}
		\label{3.1}
		|f(s)| \leq \varepsilon |s|^{\frac{2 - \mu}{2}} + C|s|^{q-1}e^{\alpha s^2}  \quad \forall s \in \mathbb{R}, 
	\end{equation}
	and there exist $\varepsilon_1>0$ and $C_1 >0$ satisfying
	\begin{equation}
		\label{3.2}
		|F(s)| \leq \varepsilon_1 |s|^{\frac{4 - \mu}{2}} + C_1|s|^q e^{\alpha s^2}  \quad \forall s \in \mathbb{R}.
	\end{equation}
	Assumption \eqref{AR} is required in order to prove the Palais-Smale sequence is bounded. Whereas the assumptions \eqref{d} and \eqref{H4} are used to estimate the minimax level of the functional associated with our problem. Moreover, using these two conditions, we do not need the control on $\beta_0$.
	
	The objective of this paper is to establish the existence of a nontrivial solution for \eqref{1.1}. Based on the position of the parameter $\lambda$ relative to the eigenvalues of $(-\Delta, \h)$ with Dirichlet boundary conditions, we present our findings through the following two main theorems. The first theorem addresses the situation when the parameter $\lambda$ lies in the interval $(0,\lambda_1)$. In this case, the classical mountain-pass theorem guarantees the existence of a critical point of the energy functional associated to the problem \eqref{1.1}. Due to the presence of weight function $Q(|x|)$,  we need to work in the function space $\hr$. As we discussed earlier, to prove the solutions of \eqref{1.1} are in $\h$, we need a condition on $\Omega$, i. e. throughout this paper we assume $\Omega$  is $\mathcal{O}(2)$-invariant, where $\mathcal{O}(2)$ is the group of all orthogonal matrices on $\R^2$.
	Our first result can be stated as follows:
	\begin{thm} 
		\label{thm1.1}
		Assume $\lambda \in (0,\lambda_1)$, \eqref{H1}-\eqref{H4} and \eqref{Q} holds. Then, the problem \eqref{1.1} has a nontrivial solution.
	\end{thm}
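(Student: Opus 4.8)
\textbf{Proof proposal for Theorem \ref{thm1.1}.}

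The plan is to produce the nontrivial solution as a mountain--pass critical point of the functional $J$ in \eqref{J} restricted to the radial space $\hr$, and then to transfer it to a genuine weak solution of \eqref{1.1} on $\h$ by invoking the principle of symmetric criticality (Theorem \ref{thm1.3}), which applies since $J$ is $\mathcal{O}(2)$--invariant (as $\Omega$ is $\mathcal{O}(2)$--invariant and $Q$ depends only on $|x|$). The first point to settle is that $J$ is well defined and of class $C^1$ on $\hr$: the local quadratic part is standard, while the nonlocal term is handled by the Hardy--Littlewood--Sobolev inequality (Proposition \ref{Hardy}) with the exponents $s=\tfrac{4}{4-\mu}$ and $\tfrac{2}{s}+\tfrac{\mu}{2}=2$, which bounds it by $\norm{Q(|\cdot|)F(u)}_{L^{s}(\Omega)}^{2}$; one then controls $Q(|\cdot|)F(u)$ in $L^{s}(\Omega)$ via the growth estimate \eqref{3.2} for $F$ together with the weighted Sobolev embedding of Lemma \ref{lem3} and the weighted Trudinger--Moser inequality of Lemma \ref{lem2.2}. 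The same ingredients give differentiability.

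Next I would verify the mountain--pass geometry. One has $J(0)=0$. Since $\lambda<\lambda_1$, the characterization \eqref{1.6} yields $\norm{u}^{2}-\lambda\norm{u}_2^{2}\ge(1-\lambda/\lambda_1)\norm{u}^{2}$, a positive definite form. Using \eqref{H1} (so that $F(s)=o(|s|^{(4-\mu)/2})$ near $0$) inside \eqref{3.2} and the HLS/weighted estimates above, the nonlocal term is $O(\norm{u}^{4-\mu})$ plus higher order exponential contributions as $\norm{u}\to 0$, on any ball $\norm{u}\le\rho$ with $\rho$ small enough that Lemma \ref{lem2.2} applies; since $4-\mu>2$, choosing $\rho$ small gives $J(u)\ge\alpha>0$ on the sphere $\norm{u}=\rho$. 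For the descending direction, fix a nonnegative radial $\phi\in\hr\setminus\{0\}$ supported in an annulus that avoids the singularity of $Q$; integrating \eqref{AR} gives $F(s)\ge c_1 s^{K}-c_2$ with $K>1$, so the nonlocal term along $t\phi$ grows at least like a positive multiple of $t^{2K}$, which dominates the quadratic terms because $2K>2$; hence $J(t\phi)\to-\infty$ and there is $e=t_0\phi$ with $\norm{e}>\rho$, $J(e)<0$. The mountain--pass theorem then provides a Palais--Smale sequence $\{u_n\}\subset\hr$ at the level $c=\inf_{\gamma}\max_{t\in[0,1]}J(\gamma(t))\ge\alpha>0$.

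Then I would prove compactness. Boundedness of $\{u_n\}$ in $\hr$ follows from estimating $J(u_n)-\tfrac{1}{2K}\langle J'(u_n),u_n\rangle$ and using \eqref{AR} (which makes the nonlocal contributions cancel) together with $\lambda<\lambda_1$. The decisive step is to bound $c$ strictly below the threshold $\Lambda$ dictated by the sharp constant $4\pi(1+\tfrac{b_0}{2})$ in Lemma \ref{lem2.2}: testing the minimax with a suitably normalized family of weighted Moser functions concentrated at the origin (the admissible choice under \eqref{Q} since $b_0>-\tfrac{4-\mu}{2}$), and using \eqref{d} and \eqref{H4} to bound the nonlocal term from below near the concentration, one obtains $c<\Lambda$; here \eqref{d} is precisely what removes the need for any quantitative lower bound on $\beta_0$. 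With $u_n\rightharpoonup u$ in $\hr$, the sub-threshold bound on $c$ keeps $\sup_n\norm{u_n}^{2}$ below the admissible exponent, so by Lemma \ref{lem2.2} and a Lions-type equi-integrability argument the families $Q(|\cdot|)F(u_n)$ and $Q(|\cdot|)f(u_n)u_n$ converge in the relevant Lebesgue spaces; passing to the limit in $\langle J'(u_n),\varphi\rangle\to0$ gives $J'(u)=0$, and comparing $J'(u_n)u_n\to0$ with $\norm{u_n}_2\to\norm{u}_2$ and the convergence of the nonlocal term forces $u\ne0$ (otherwise $\norm{u_n}\to0$, contradicting $c>0$). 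Finally, testing $J'(u)=0$ with $u^{-}$ and using $f\equiv0$ on $(-\infty,0]$ together with $\lambda<\lambda_1$ shows $u\ge0$, and Theorem \ref{thm1.3} upgrades $u$ to a critical point of $J$ over $\h$, i.e.\ a nontrivial weak solution of \eqref{1.1}.

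The main obstacle I expect is the level estimate $c<\Lambda$: one must design the right weighted Moser test functions adapted to the singular profile $|x|^{b_0}$ of $Q$ inside the convolution double integral, and combine \eqref{d} and \eqref{H4} carefully enough to beat the critical level without assuming anything on $\beta_0$; the accompanying concentration--compactness analysis of the exponential--convolution term along the Palais--Smale sequence is the other technically heavy ingredient.
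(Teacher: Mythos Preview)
Your proposal is correct and follows essentially the same route as the paper: mountain--pass geometry in $\hr$, boundedness of the Palais--Smale sequence from \eqref{AR}, the level estimate via Moser test functions using \eqref{d}--\eqref{H4}, recovery of a nontrivial critical point below the threshold, and transfer to $\h$ by symmetric criticality. Two minor corrections: the sharp exponent in Lemma \ref{lem2.2} is $4\pi\big(1+\tfrac{2b_0}{4-\mu}\big)$, not $4\pi(1+\tfrac{b_0}{2})$; and in your compactness sketch the claim that ``$\sup_n\|u_n\|^2$ stays below the admissible exponent'' only holds directly when the weak limit is zero (giving $\|u_n\|^2\to 2c$), whereas for a nonzero limit one needs the Lions improvement (Lemma \ref{lem2.3}) --- the paper separates these two cases in Lemmas \ref{lem4.6} and \ref{lem4.7}.
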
	
	Next, we deal with the case when $\lambda$ lies between two consecutive eigenvalues of $(-\Delta, \h)$, i.e. $\lambda\in(\lambda_k, \lambda_{k+1})$ for $k\geq 1$. In this situation, the classical mountain-pass theorem is not applicable. Instead, we use a milder version of the mountain-pass theorem to prove existence, specifically, the generalized mountain-pass theorem, also known as the linking theorem.
	\begin{thm}
		\label{thm1.2}
		Assume $\lambda \in (\lambda_k, \lambda_{k+1})$, \eqref{H1}-\eqref{H4} and \eqref{Q} holds. Then the problem \eqref{1.1} has a nontrivial solution.
	\end{thm}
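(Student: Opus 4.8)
### Proof proposal for Theorem \ref{thm1.2}

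The plan is to apply the linking theorem (generalized mountain-pass) to the functional $J$ restricted to the radial space $\hr$, and then invoke the principle of symmetric criticality (Theorem \ref{thm1.3}) to conclude that the resulting critical point solves \eqref{1.1} in $\h$. The linking geometry is built from the spectral decomposition $\h = H_k(\Omega) \oplus H_k^{\perp}(\Omega)$: I would take $Q$-type sets $Q = (\overline{B}_{R} \cap H_k) \oplus \{ r e : 0 \le r \le R_1 \}$ for a fixed $e \in H_k^{\perp}$ with $\|e\|=1$, linked with $S = \partial B_\rho \cap H_k^{\perp}$. (One must be mildly careful: the radial projections of the eigenspaces must be used, but since $\Omega$ is $\mathcal{O}(2)$-invariant the relevant subspaces intersected with $\hr$ still carry the variational characterizations \eqref{1.6}--\eqref{1.8}.) First I would verify the linking inequalities: on $H_k^{\perp}$ the quadratic part $\frac12\|u\|^2 - \frac{\lambda}{2}\|u\|_2^2$ is bounded below by $\frac12(1 - \lambda/\lambda_{k+1})\|u\|^2 > 0$ since $\lambda < \lambda_{k+1}$, and combined with \eqref{3.2} together with the weighted Hardy--Littlewood--Sobolev estimate this gives $J \ge \alpha_0 > 0$ on $S$ for $\rho$ small; on $H_k$ one uses $\lambda > \lambda_k$ together with \eqref{1.8} so that the quadratic form is negative definite there, giving $J \le 0$ on $H_k$, and the Ambrosetti--Rabinowitz condition \eqref{AR} forces $J \to -\infty$ along the $e$-direction, yielding $J \le 0$ on $\partial Q$ for $R, R_1$ large.

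The next step is to show the linking minimax level $c$ lies strictly below the compactness threshold. Because of the critical exponential growth and the $\mu$-dependent weighted Trudinger--Moser inequality (Lemma \ref{lem2.2}), Palais--Smale sequences are only guaranteed to be precompact below a level determined by $\alpha_0$ and the sharp exponent $4\pi(1 + b_0/2)$ (the singular constant from \eqref{2.5}--\eqref{2.6} adjusted to our weight). To get $c$ below this level I would use a Moser-type concentrating family, truncated/normalized in $\hr$, estimate $\max_{t} J(tw_n + (\text{piece in } H_k))$, and exploit \eqref{d} and \eqref{H4} to control $F$ from below near the concentration point without needing any quantitative lower bound on $\beta_0$ — this is exactly the role those two hypotheses play, as the paper already flagged. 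The bookkeeping here, with the extra weight $Q(|x|)\sim |x|^{b_0}$ inside a double (convolution) integral, is heavier than in the local scalar case.

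After that I would establish that $J$ satisfies $(PS)_c$ at the selected level: boundedness of any $(PS)_c$ sequence comes from \eqref{AR} in the standard way (testing $J'(u_n)$ against $u_n$ and combining with $J(u_n) \to c$), and then a weak limit $u$ is extracted; using \eqref{3.1} and the generalized Lions-type Trudinger--Moser inequality one upgrades the nonlocal nonlinear term to converge strongly (the $L^1$-convergence of $F(u_n)$ via Vitali, then the convolution term by HLS and the weighted embedding of Lemma \ref{lem3}), yielding $J'(u)=0$. Finally one must rule out $u = 0$: if $u_n \rightharpoonup 0$ one shows the nonlocal term vanishes and $J(u_n) \to c$ becomes incompatible with $c$ being below the threshold, a contradiction; hence $u \ne 0$. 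The main obstacle I anticipate is the threshold computation in the second step — quantifying the correct critical energy level in the presence of the singular weight inside the Choquard-type double integral, and producing a test-function family whose minimax value provably sits below it, is where the weight $Q$ genuinely complicates the classical Adimurthi--Sandeep / de Figueiredo--Miyagaki--Ruf estimates. Once that is in hand, symmetric criticality (Theorem \ref{thm1.3}, applicable since $\h$ is reflexive and strictly convex and $J$ is $\mathcal{O}(2)$-invariant with $\Omega$ $\mathcal{O}(2)$-invariant) promotes the radial critical point to a genuine weak solution of \eqref{1.1}.
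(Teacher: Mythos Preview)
Your proposal is essentially the paper's own argument: linking theorem on $\hr$ with the splitting $H_{k,r}\oplus H_{k,r}^\perp$, geometry from \eqref{1.7}--\eqref{1.8} and \eqref{AR}, minimax level pushed below the threshold $\frac{(4-\mu)\pi}{2\alpha_0}\big(1+\frac{2b_0}{4-\mu}\big)$ via Moser-type test functions together with \eqref{d}--\eqref{H4}, then boundedness/compactness of Palais--Smale sequences (Lemmas \ref{lem3.1}, \ref{lem3.2}, \ref{lemma3.4}, \ref{lem4.7}) and finally symmetric criticality.

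One point where your sketch is vague and where the paper invests real work: the linking direction $e\in H_{k,r}^\perp$ cannot be chosen independently of the concentrating family. The paper takes $e=z_n:=W_n/\|W_n\|$ where $W_n=P_k(M_n)$ is the \emph{orthogonal projection} of the Moser function onto $H_{k,r}^\perp$, and proves pointwise and norm estimates for $W_n$ (Lemma \ref{lem5.4}) that survive the projection; without this, $M_n$ itself is not in $H_{k,r}^\perp$ and the threshold computation collapses. Moreover, the level estimate is not just $\max_t J(tw_n)$ but $\max\{J(v+sz_n): v\in H_{k,r},\ \|v\|\le R,\ 0\le s\le R\}$, and the paper's contradiction argument has to control the extra finite-dimensional piece $v_n$ (showing $v_n\to 0$ and $s_n^2\to \frac{(4-\mu)\pi}{\alpha_0}(1+\frac{2b_0}{4-\mu})$) before the exponential blow-up can be exploited. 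Make sure your write-up addresses both of these explicitly; otherwise the strategy is on target.
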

	We list some of the contributions of this paper in the literature as follows:
	\begin{enumerate}
		\item[(i)] The non-resonant case, i.e., when $\lambda$ does not equal to any of the eigenvalues of the operator $(-\Delta, \h)$ is not explored yet, whereas the resonant case with local nonlinearity was studied in \cite{Bruno} under the assumption $Q(|x|)\equiv1$. 
		\item[(ii)] If $\lambda>\lambda_1$, we cannot apply the classical mountain-pass theorem to prove existence. We use a generalized version of the mountain-pass theorem in this case. Additionally, proving that the minimax level lies in a certain range is one of the challenges that involves a different approach from the resonant case.
		\item[(iii)] Shen, R\u adulescu and Yang \cite{Shen} proved a nontrivial mountain-pass solution and bound state solution in $H^1(\R^2)$ for Choquard equations of type \eqref{1.1.2} with a weight $Q$ and a potential $V$ decay to zero at infinity. As compared to this weight, we have considered less restrictive conditions on \eqref{Q} adding the linear perturbation.
		\item[(iv)] {Our weight assumptions are general and contain the weights of \cite{Alves1, Shen} as a particular case. Specifically, if we take $Q(|x|)= \frac{1}{|x|^\beta}$ with $\mu+2\beta<2$, then the problem \eqref{1.1} becomes similar to the Stein-Weiss problem as considered in \cite{Alves1, Shen}}.
		\item[(v)] We establish a version of the Sobolev embedding result and Trudinger-Moser inequality related to our problem, which is useful while studying a prototype of the weighted problem defined in \eqref{1.1} 
	\end{enumerate}
	
	The outline of this paper is given as follows:
	In Section \ref{sec2}, we present some preliminary results that will be useful later. In Section \ref{sec3}, we verify certain properties of the Palais-Smale sequences related to the functional. In Section \ref{sec4}, we verify the geometric conditions for the case $0<\lambda<\lambda_1$. Additionally, we provide more detailed information about the minimax level derived from the mountain-pass theorem, and we prove Theorem \ref{1.1}. In Section \ref{sec5}, we consider the case $\lambda \in (\lambda_k, \lambda_{k+1})$.  This section is dedicated to proving Theorem \ref{thm1.2} using linking geometry. 
	\section{Preliminaries}
	\label{sec2}
	In this section, we primarily concentrate on some foundational results that are crucial in this paper.
	\subsection{A version of weighted Sobolev embedding}
	We begin by introducing some initial lemmas that are necessary for proving weighted Sobolev embedding. The following Radial lemma is due to Strauss \cite{Strauss}.
	\begin{lem}
		\label{lem1}
		There exists $R_1>0$ and $C>0$ such that for all $u\in H^1_{0, rad} (\Omega)$
		\begin{equation*}
			|u(x)| \leq C \|u\| |x|^{-\1}, \ \forall \ |x|\geq R_1.
		\end{equation*}  
	\end{lem}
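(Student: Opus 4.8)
The plan is to run the classical Strauss radial-decay argument after extending $u$ by zero to all of $\R^2$. Since $\Omega$ is bounded and $u\in H^1_0(\Omega)$, the zero extension $\tilde u$ lies in $H^1(\R^2)$, is still radial, satisfies $\|\nabla\tilde u\|_{L^2(\R^2)}=\|u\|$, and, by the Poincar\'e inequality on the bounded set $\Omega$, $\|\tilde u\|_{L^2(\R^2)}=\|u\|_{L^2(\Omega)}\le C_\Omega\|u\|$. Thus it suffices to prove a pointwise bound of the form $|x|\,|v(x)|^2\le c\,\|v\|_{L^2(\R^2)}\,\|\nabla v\|_{L^2(\R^2)}$ for radial $v\in H^1(\R^2)$, and then specialize to $v=\tilde u$ (more precisely, to the smooth approximants below).

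Since $\hr$ is the closure of $C_{0,rad}(\Omega)$, I would first establish the bound for $v\in C_{0,rad}(\Omega)$, writing $v(x)=v(r)$ with $r=|x|$ and viewing the zero extension as a smooth compactly supported radial function on $\R^2$. By the fundamental theorem of calculus $v(r)^2=-2\int_r^\infty v(s)v'(s)\,ds$; multiplying by $r$, using $r\le s$ in the integrand, and applying the Cauchy--Schwarz inequality with respect to the measure $s\,ds$,
\[
r\,v(r)^2\le 2\int_r^\infty s\,|v(s)|\,|v'(s)|\,ds\le 2\Big(\int_0^\infty s\,v(s)^2\,ds\Big)^{1/2}\Big(\int_0^\infty s\,v'(s)^2\,ds\Big)^{1/2}=\frac{1}{\pi}\,\|v\|_{L^2(\R^2)}\,\|\nabla v\|_{L^2(\R^2)},
\]
where I used the polar-coordinate identities $\|v\|_{L^2(\R^2)}^2=2\pi\int_0^\infty s\,v^2\,ds$ and $\|\nabla v\|_{L^2(\R^2)}^2=2\pi\int_0^\infty s\,v'^2\,ds$. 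This is exactly the desired bound with $c=1/\pi$.

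To pass from smooth radial functions to a general $u\in\hr$, take $u_n\in C_{0,rad}(\Omega)$ with $u_n\to u$ in $H^1_0(\Omega)$; combining the reduction of the first paragraph with the estimate of the second applied to each $u_n$ gives the uniform bound $|x|\,|u_n(x)|^2\le (C_\Omega/\pi)\,\|u_n\|^2$, and along a subsequence $u_n\to u$ a.e., so this passes to the limit to yield $|u(x)|\le C\|u\|\,|x|^{-1/2}$ for a.e.\ $x$. Finally, on each region $\{|x|\ge r_0\}$ with $r_0>0$ a radial $H^1$ function has a continuous representative (by one-dimensional Sobolev embedding in the variable $r$), so the inequality in fact holds at every such point; hence the claim follows, with any choice of $R_1>0$ (and it is trivially true once $R_1>\sup_{x\in\Omega}|x|$, where $u\equiv 0$). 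I do not foresee a genuine obstacle: the only points needing care are the legitimacy of the zero-extension and of Poincar\'e's inequality---both automatic from boundedness of $\Omega$---and upgrading the almost-everywhere bound to a pointwise bound for the continuous representative on $\{|x|\ge R_1\}$.
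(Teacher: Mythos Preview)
Your proof is correct and is precisely the classical Strauss argument. The paper, however, does not give its own proof of this lemma: it simply attributes the result to Strauss \cite{Strauss} and moves on. So there is nothing to compare against beyond noting that you have supplied the standard derivation the paper omits; the zero-extension, Poincar\'e, fundamental-theorem-of-calculus, and Cauchy--Schwarz steps are all valid, and your care with the continuous representative on $\{|x|\ge r_0\}$ is appropriate.
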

	The proof of the following lemma is given in \cite{Su}.
	\begin{lem}
		\label{lem2}
		\begin{enumerate}
			\item[(i)] Suppose \eqref{Q} holds. For $1\leq p \leq \infty$ and for any $0<r<R<\infty$ with $R>>1$, the embedding $ H^1_{0, rad}(B_R \setminus B_r) \hookrightarrow L^p(B_R\setminus B_r)$ is compact. \item[(ii)] For each open ball $B_R \subset \R^2$, the space $H^1_{0, rad}(B_R)$ is continuously embedded in $H^1_0(B_R)$. In particular, $H^1_{0, rad}(B_R)$ is continuously embedded in $L^p(B_R)$, for $ 1 \leq p <\infty$.
		\end{enumerate}
	\end{lem}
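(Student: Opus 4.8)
The plan is to treat the two parts separately: part (ii) is essentially formal, while the compactness in part (i) is obtained by reducing to a one--dimensional problem on the interval $(r,R)$ and invoking the Arzel\`a--Ascoli theorem.

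For (ii), observe that $H^1_{0,rad}(B_R)$ is, by definition, the closure in $H^1_0(B_R)$ of the radial functions of $C_0^\infty(B_R)$; hence it is a closed linear subspace of $H^1_0(B_R)$ carrying the restricted Dirichlet norm, so the inclusion $H^1_{0,rad}(B_R)\hookrightarrow H^1_0(B_R)$ is (isometric, hence) continuous. Since $B_R\subset\R^2$ is a bounded smooth domain, the classical Sobolev embedding gives $H^1_0(B_R)\hookrightarrow L^p(B_R)$ continuously for every $1\le p<\infty$; composing the two inclusions yields the claim.

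For (i), fix $0<r<R<\infty$ and put $A=B_R\setminus\overline{B_r}$. On $A$ the weight $Q$ is continuous and strictly positive on the compact set $[r,R]$, hence bounded above and below by positive constants, so it does not affect the topology and may be ignored. For a radial $u\in C_0^\infty(A)$ write $u(x)=\tilde u(\rho)$ with $\rho=|x|$, so that $\tilde u\in C_0^\infty(r,R)$; passing to polar coordinates,
\[
\int_A|\nabla u|^2\,dx=2\pi\int_r^R|\tilde u'(\rho)|^2\,\rho\,d\rho,\qquad \int_A|u|^p\,dx=2\pi\int_r^R|\tilde u(\rho)|^p\,\rho\,d\rho .
\]
Because $r\le\rho\le R$ with $r>0$, the factor $\rho$ stays between two positive constants, so $u\mapsto\tilde u$ extends to a linear homeomorphism between $H^1_{0,rad}(A)$ and $H^1_0(r,R)$ which also intertwines $L^p(A)$-- and $L^p(r,R)$--convergence. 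It therefore suffices to prove that $H^1_0(r,R)\hookrightarrow L^p(r,R)$ is compact for all $1\le p\le\infty$.

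Let $\{\tilde u_n\}$ be bounded in $H^1_0(r,R)$, say $\|\tilde u_n'\|_{L^2(r,R)}\le M$. By the fundamental theorem of calculus and the Cauchy--Schwarz inequality, $|\tilde u_n(t)-\tilde u_n(s)|\le M|t-s|^{1/2}$ for $r\le s<t\le R$, and since $\tilde u_n(r)=0$ also $\|\tilde u_n\|_{L^\infty(r,R)}\le M(R-r)^{1/2}$. Thus $\{\tilde u_n\}$ is uniformly bounded and equicontinuous on $[r,R]$, so by Arzel\`a--Ascoli a subsequence converges uniformly, i.e. in $L^\infty(r,R)$, and hence in $L^p(r,R)$ for every $1\le p\le\infty$ since $(r,R)$ has finite measure; transporting this back through the homeomorphism above gives (i). The one genuinely delicate point is this polar--coordinate reduction: one must keep the weight $\rho$ under control and use that it is bounded away from $0$ on $A$, which is exactly where the hypothesis $r>0$ enters --- on the full ball this step breaks down and compactness into $L^\infty$ really does fail, which is precisely why the Trudinger--Moser machinery is needed elsewhere in the paper.
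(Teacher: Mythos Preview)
Your argument is correct. The paper itself does not prove this lemma but simply cites \cite{Su}, so your write-up supplies a self-contained proof where the paper defers to the literature. The route you take---part (ii) purely formal, and part (i) via the polar-coordinate identification of $H^1_{0,\mathrm{rad}}(B_R\setminus B_r)$ with $H^1_0(r,R)$ followed by Arzel\`a--Ascoli---is the standard one and is essentially what underlies the cited reference. Your closing observation that $r>0$ is exactly what keeps the Jacobian weight $\rho$ bounded away from zero, and that this is why the argument fails on the full ball, is precisely the point. One cosmetic remark: the hypothesis ``Suppose \eqref{Q} holds'' in the statement of part (i) is in fact superfluous, since neither side of the embedding involves $Q$; your comment that $Q$ may be ignored is correct, but strictly speaking there is nothing to justify.
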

	Next, $1 \leq p < \infty$ we define weighted Lebesgue spaces as
	\[ L_Q^p(\Omega) := \{ u: \overline{\Omega} \rightarrow \R \text{ is measurable and } \int\limits_\Omega Q(|x|) |u|^p dx < \infty\}\]
	endowed with the norm $\Bigg( \int\limits_\Omega Q(|x|)|u|^p dx\Bigg)^{\frac{1}{p}}$.
	Finally, we provide a version of the weighted Sobolev embedding result that is suitable for our problem. The following proof is inspired to \cite{Albuquerque}. 
	\begin{lem}
		\label{lem3}
		Suppose that \eqref{Q} holds true, for all $\frac{4-\mu}{4} \leq p <\infty$, the embedding $\hr \hookrightarrow L^{p_\mu}_{Q_\mu} := \left\{ u: \overline{\Omega} \rightarrow \R \text{ is Lebesgue measurable } \Big| \int\limits_\Omega Q^{\frac{4}{4-\mu}}(|x|) |u|^{\frac{4p}{4-\mu}}dx < \infty \right\}$ is compact.
	\end{lem}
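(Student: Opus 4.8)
The plan is to localise: split $\Omega$ into a small ball $B_\delta$ around the origin — the only place where the weight $Q$ may be singular — and its complement in $\Omega$, where $Q$ is continuous and bounded away from $0$ and $\infty$. Near the origin I would trade the singular weight for a higher Lebesgue exponent by Hölder's inequality, and then close the argument with the compact Sobolev embedding of $\hr$ into $L^m(\Omega)$ for every finite $m$. Throughout write $t:=\frac{4}{4-\mu}>1$ and $q:=\frac{4p}{4-\mu}=pt$; since $p\ge\frac{4-\mu}{4}$ we have $q\ge 1$, and the target space is precisely $L^q\bigl(\Omega;Q^t(|x|)\,dx\bigr)$, whose norm I denote $\|u\|_{\ast}:=\bigl(\int_\Omega Q^t(|x|)|u|^q\,dx\bigr)^{1/q}$.

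First I would set up the Hölder reduction near $0$. By \eqref{Q} there are $\delta>0$ and $C_2>0$ with $Q(r)\le C_2r^{b_0}$ for all $0<r\le\delta$, hence $Q^t(|x|)\le C_2^t|x|^{tb_0}$ on $\Omega\cap B_\delta$. The hypothesis $b_0>-\frac{4-\mu}{2}$ is exactly $tb_0>-2$, so one can choose $s\in(1,\infty)$ with $stb_0>-2$ as well (any $s$ if $tb_0\ge 0$, and any $s<-2/(tb_0)$ if $-2<tb_0<0$). With $s':=s/(s-1)<\infty$, Hölder's inequality gives
\[
\int_{\Omega\cap B_\delta}Q^t(|x|)|u|^q\,dx\ \le\ C_2^t\Big(\int_{B_\delta}|x|^{stb_0}\,dx\Big)^{1/s}\Big(\int_\Omega|u|^{qs'}\,dx\Big)^{1/s'},
\]
and $\int_{B_\delta}|x|^{stb_0}\,dx<\infty$ because $stb_0>-2$ in $\R^2$. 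On $\Omega\setminus B_\delta$, whose closure is a compact subset of $\R^2\setminus\{0\}$, continuity of $Q$ gives $Q(|x|)\le M_\delta$ there, so $\int_{\Omega\setminus B_\delta}Q^t(|x|)|u|^q\,dx\le M_\delta^t\int_\Omega|u|^q\,dx$. Adding the two bounds and using that $\hr$ embeds continuously in $L^m(\Omega)$ for every $m\in[1,\infty)$ (Lemma \ref{lem2}, since $\Omega\subset\R^2$ is bounded), I obtain $\|u\|_{\ast}\le C\|u\|$, so the embedding $\hr\hookrightarrow L^q(\Omega;Q^t(|x|)\,dx)$ is well defined and bounded.

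For the compactness I would take a bounded sequence $\{u_n\}\subset\hr$ and, by reflexivity, extract a subsequence with $u_n\rightharpoonup u$ in $\hr$. Since $\Omega\subset\R^2$ is bounded, $\h\hookrightarrow L^m(\Omega)$ is compact for every finite $m$, hence so is $\hr\hookrightarrow L^m(\Omega)$ (a closed subspace); thus $u_n\to u$ in both $L^q(\Omega)$ and $L^{qs'}(\Omega)$ (note $q,qs'\in[1,\infty)$). Applying the two displayed estimates to $u_n-u$ yields
\[
\|u_n-u\|_{\ast}^{q}\ \le\ C_2^t\Big(\int_{B_\delta}|x|^{stb_0}\,dx\Big)^{1/s}\|u_n-u\|_{L^{qs'}(\Omega)}^{q}+M_\delta^t\|u_n-u\|_{L^q(\Omega)}^{q}\ \longrightarrow\ 0,
\]
which shows $u_n\to u$ in $L^q(\Omega;Q^t(|x|)\,dx)$ and completes the argument.

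There is no substantial obstacle beyond choosing the Hölder exponent $s$ correctly: it must keep $|x|^{stb_0}$ integrable at the origin while its conjugate $s'$ remains finite, and both are possible \emph{precisely} because of the sharp lower bound $b_0>-\frac{4-\mu}{2}$ in \eqref{Q} — equivalently $tb_0>-2$, which is exactly the condition making $Q^t$ locally integrable near $0$ in $\R^2$. This also indicates that the threshold is essentially optimal: if $b_0\le-\frac{4-\mu}{2}$, then the liminf bound in \eqref{Q} forces $Q^t$ to fail local integrability at the origin, and already functions of $\hr$ that are bounded below by a positive constant near $0$ would leave the target space.
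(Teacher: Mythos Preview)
Your argument is correct and follows the same core idea as the paper --- localise near the origin, absorb the singular weight there via H\"older's inequality (your exponent $s$ plays the role of the paper's $v$), and bound $Q$ uniformly on the complement --- but it is cleaner in execution. The paper decomposes $\Omega$ into three pieces (a small ball $B_r$, an annulus $B_R\setminus B_r$, and an outer shell $\Omega\cap B_R^c$ handled via the Strauss radial lemma), and then runs an $\varepsilon$-argument for compactness by shrinking $r$ and enlarging $R$; this machinery is carried over from the whole-space setting of \cite{Albuquerque}, where it is genuinely needed. On a bounded $\Omega$ your two-piece split and direct appeal to Rellich--Kondrachov in $L^q(\Omega)$ and $L^{qs'}(\Omega)$ suffice, and in fact your proof never uses radiality: it shows the compact embedding holds for all of $\h$, not just $\hr$.
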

	\begin{proof}
		We prove the lemma in two steps. First, we establish that the embedding from $\hr$ to $L^{p_\mu}_{Q_\mu}$ is continuous. In the second step, we show that this embedding is compact.
		\begin{step}
			Suppose that $0<r<R$. Choosing $r$ small enough and $R\geq R_1$, where $R_1$ is defined in Lemma \ref{lem1}. Then, we claim that for all $u\in \hr$ and $\frac{4-\mu}{4} \leq p <\infty$, there is a constant $C>0$ such that
			\begin{equation}
				\label{2.1}
				\int\limits_\Omega Q^{\frac{4}{4-\mu}}(|x|) |u|^{\frac{4p}{4-\mu}}dx \leq C \Bigg(R^{-\frac{2p}{4-\mu}}\|u\|^{\frac{4p}{4-\mu}}+ r^{\frac{4b_0}{4-\mu}+\frac{2}{v}}\|u\|^{\frac{4p}{4-\mu}} + \int\limits_{\Omega \cap \{B_R\setminus B_r\}} |u|^{\frac{4p}{4-\mu}}dx \Bigg),
			\end{equation}
			where $v>1$ and $b_0v > -\frac{4-\mu}{2}$. 
			
			If \eqref{2.1} holds, then by Lemma \ref{lem2}, we have
			$\int\limits_{\Omega \cap \{B_R\setminus B_r\}} |u|^{\frac{4p}{4-\mu}}dx \leq C_p \|u\|^{\frac{4p}{4-\mu}}.$
			Therefore, we fix $R$  and $r$ in \eqref{2.1}, such that there is a constant $\overline{C}>0$ satisfying
			\begin{equation*}
				\int\limits_\Omega Q^{\frac{4}{4-\mu}}(|x|) |u|^{\frac{4p}{4-\mu}}dx \leq \overline{C} \|u\|^{\frac{4p}{4-\mu}},
			\end{equation*}
			which further implies the continuity of the embedding. Hence, to prove continuous embedding, it is sufficient to verify \eqref{2.1}.
			
			From the assumption \eqref{Q}, we have, $Q(|x|) \leq C_0 |x|^b$, for all $|x| \geq R$. Using the fact that $|x|^b$ is bounded in $\Omega \cap B_R^c$, for any $b\in \R$, we have
			\begin{align*}
				\int\limits_{\Omega \cap B_R^c } Q^{\frac{4}{4-\mu}}(|x|) |u|^{\frac{4p}{4-\mu}}dx &\leq C_0\int\limits_{\Omega \cap B_R^c } |x|^{\frac{4b}{4-\mu}} |u|^{\frac{4p}{4-\mu}}dx 
				\leq C_1 \int\limits_{\Omega \cap B_R^c }|u|^{\frac{4p}{4-\mu}}dx,
			\end{align*}
			this together with Lemma \ref{lem1}, we obtain
			\begin{align}
				\label{2.2}
				\int\limits_{\Omega \cap B_R^c } Q^{\frac{4}{4-\mu}}(|x|) |u|^{\frac{4p}{4-\mu}}dx\leq C_2 R^{-\frac{2p}{4-\mu}}\|u\|^{\frac{4p}{4-\mu}}.
			\end{align}
			Again by hypothesis \eqref{Q}, there exists a constant $C_3>0$ satisfying
			\begin{equation*}
				Q(|x|) \leq C_3 |x|^{b_0} , \text{ for all } 0<|x|<r_0.
			\end{equation*}
			Now, we estimate the integral on $\Omega \cap B_r$. Applying H\"older's inequality with $b_0 v > -\frac{4-\mu}{2}$, we obtain 
			\begin{align}
				\int\limits_{\Omega \cap B_r } Q^{\frac{4}{4-\mu}}(|x|) |u|^{\frac{4p}{4-\mu}}dx &\leq C_3\int\limits_{\Omega \cap B_r } |x|^{\frac{4b_0}{4-\mu}} |u|^{\frac{4p}{4-\mu}}dx \leq C_3 \Bigg(\int\limits_{\Omega \cap B_r } |x|^{\frac{4b_0v}{4-\mu}}dx\Bigg)^{\frac{1}{v}} \Bigg( \int\limits_{\Omega \cap B_r } |u|^{\frac{4pv'}{4-\mu}}dx \Bigg)^{\frac{1}{v'}}. \nonumber
			\end{align}
			Using Lemma \ref{lem2}, we get
			\begin{align}
				\label{2.3}
				\int\limits_{\Omega \cap B_r } Q^{\frac{4}{4-\mu}}(|x|) |u|^{\frac{4p}{4-\mu}}dx
				\leq C_4 r^{\frac{4b_0}{4-\mu}+ \frac{2}{v}} \|u\|^{\frac{4p}{4-\mu}}.
			\end{align}
			Consider the integral on an annular region $\Omega \cap \{B_R \setminus B_r\}$. Because $Q(|x|)$ is continuous on $(0,\infty)$, we can define $M_{r,R} = \displaystyle\max_{r\leq t \leq R} Q(t)$. Using this, we have
			\begin{align}
				\label{2.4}
				\int\limits_{\Omega \cap \{B_R \setminus B_r\}} Q^{\frac{4}{4-\mu}}(|x|) |u|^{\frac{4p}{4-\mu}}dx &\leq M_{r,R}^{\frac{4}{4-\mu}} \int\limits_{\Omega \cap \{B_R \setminus B_r\}} |u|^{\frac{4p}{4-\mu}}dx.
			\end{align}
			Adding \eqref{2.2}, \eqref{2.3} and \eqref{2.4}, we can find a constant $C>0$ such that the estimate in \eqref{2.1} holds. This proves the continuity of the embedding.
		\end{step}
		\begin{step2}
			We assume that $\{u_n\}$ be a sequence in $\hr$ such that $\|u_n\|\leq C$, for some $C>0$. This implies that there exists $u_0 \in \hr$ such that $u_n \rightharpoonup u_0$ in $\hr$. To prove compactness, we need to show that, up to a subsequence, $u_n \rightarrow u_0$ in $L^{p_\mu}_{Q_\mu}$ with $\frac{4-\mu}{4} \leq p <\infty$,
			i.e.
			\[ \lim_{n\rightarrow \infty} \int\limits_{\Omega} Q^{\frac{4}{4-\mu}}(|x|) |u_n-u_0|^{\frac{4p}{4-\mu}}dx =0.\]
			For a given $\varepsilon>0$, choosing $R>0$ sufficiently large and $r>0$ sufficiently small such that the following holds
			\[  r^{\frac{4b_0}{4-\mu}+\frac{2}{v}} < \varepsilon \text{ and } R^{-\frac{2p}{4-\mu}} < \varepsilon.\]
			Since  $H^1_{0, rad}(B_R \setminus B_r)$ is compactly embedded in $L^p(B_R \setminus B_r)$, for $1\leq p <\infty$, it follows that 
			\[ \lim_{n \rightarrow \infty}\int\limits_{\Omega \cap \{B_R\setminus B_r\}} |u_n - u_0|^{\frac{4p}{4-\mu}}dx=0.\]
			Now, as $u_n \rightharpoonup u_0$ in $\hr$, there is a constant $C_5>0$ satisfying $\|u_n-u_0\| \leq C_5$, for all $n$. Therefore, by the above relations, we get
			\begin{align*}
				\int\limits_{\Omega} Q^{\frac{4}{4-\mu}}(|x|) |u_n-u|^{\frac{4p}{4-\mu}}dx
				&\leq C ( 2\varepsilon C_5^{\frac{4p}{4-\mu}} + o_n(1)).
			\end{align*}
			This completes the proof by taking $n\to \infty$.
		\end{step2}
	\end{proof}
	\subsection{A version of weighted Trudinger-Moser inequality}
	As discussed earlier, due to the presence of a weight function $Q$, we  prove the following version of Trudinger-Moser inequality, inspired from \cite{Albuquerque}, that will be an important tool for our results. The statement follows as
	\begin{lem} 
		\label{lem2.2}
		Suppose \eqref{Q} holds. If $u\in \hr$ and $\alpha>0$, then $Q^{\frac{4}{4-\mu}}(|x|)e^{\alpha u^2} \in L^1(\Omega)$. Moreover,
		\begin{equation*}
			\sup_{u\in \hr, \|u\| \leq 1} \int\limits_\Omega Q^{\frac{4}{4-\mu}}(|x|)e^{\alpha u^2}dx < +\infty, \ \text{ for } \ \alpha \leq 4\pi \Big( 1+ \frac{2b_0}{4-\mu}\Big).
		\end{equation*}
	\end{lem}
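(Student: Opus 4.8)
The plan is to split $\int_\Omega Q^{4/(4-\mu)}(|x|)e^{\alpha u^2}\,dx$ into a piece near the origin, where the behaviour of $Q$ is dictated by the exponent $b_0$, and a piece bounded away from the origin, where $Q$ is controlled by its continuity while the radial symmetry of $u$ forces $u$ itself to be uniformly bounded. Put $\beta:=\frac{4b_0}{4-\mu}$; then \eqref{Q} gives $\beta>-2$, and the threshold in the statement is exactly $4\pi(1+\tfrac{\beta}{2})$. Fix $R_0>0$ with $\Omega\subset B_{R_0}$ and, for $u\in\hr$, extend $u$ by zero to $\tilde u\in H^1_{0,rad}(B_{R_0})$, noting $\|\tilde u\|=\|u\|$ since the Dirichlet integral is scale invariant in dimension two. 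By \eqref{Q} choose $\delta\in(0,R_0)$ and $C_3>0$ with $Q(|x|)\le C_3|x|^{b_0}$ for $0<|x|<\delta$, and write $\int_\Omega=\int_{\Omega\cap B_\delta}+\int_{\Omega\setminus B_\delta}$.

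For the outer piece I would observe that, for $\|u\|\le1$ and $|x|=r\in[\delta,R_0]$, the fundamental theorem of calculus (using $\tilde u(R_0)=0$) and the Cauchy--Schwarz inequality give
\[
|u(r)|=\Big|\int_r^{R_0}\tilde u'(s)\,ds\Big|\le\Big(\int_r^{R_0}|\tilde u'(s)|^2 s\,ds\Big)^{1/2}\Big(\log\tfrac{R_0}{r}\Big)^{1/2}\le\Big(\tfrac{1}{2\pi}\log\tfrac{R_0}{\delta}\Big)^{1/2},
\]
so $u$ is bounded on $\Omega\setminus B_\delta$ by a constant depending only on $\delta$ and $R_0$. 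Since $Q$ is continuous and positive on the compact set $\overline\Omega\setminus B_\delta$, hence bounded there, it follows that $\int_{\Omega\setminus B_\delta}Q^{4/(4-\mu)}(|x|)e^{\alpha u^2}\,dx$ is bounded uniformly over $\{u\in\hr:\|u\|\le1\}$, and this bound is finite for every $\alpha>0$.

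For the inner piece I would use $Q(|x|)\le C_3|x|^{b_0}$ to reduce to a singular Trudinger--Moser inequality. If $\beta\le0$, then $\int_{\Omega\cap B_\delta}|x|^\beta e^{\alpha u^2}\,dx\le\int_\Omega|x|^\beta e^{\alpha u^2}\,dx$, which is bounded uniformly over $\{\|u\|\le1\}$ for $\alpha\le4\pi(1+\tfrac{\beta}{2})$ by the Adimurthi--Sandeep inequality \eqref{2.5}. If $\beta>0$, passing to $v(y):=\tilde u(R_0y)\in H^1_{0,rad}(B_1)$ with $\|v\|\le1$ gives $\int_{\Omega\cap B_\delta}|x|^\beta e^{\alpha u^2}\,dx\le R_0^{2+\beta}\int_{B_1}|y|^\beta e^{\alpha v^2}\,dy$, which is uniformly bounded for $\alpha\le4\pi(1+\tfrac{\beta}{2})$ by the radial improvement \eqref{2.6}. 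Adding the two pieces yields the claimed supremum bound. The membership $Q^{4/(4-\mu)}(|x|)e^{\alpha u^2}\in L^1(\Omega)$ for an arbitrary $u\in\hr$ and arbitrary $\alpha>0$ then follows by applying the outer estimate to $u/\|u\|$ with parameter $\alpha\|u\|^2$ (finite for every parameter) and, on $\Omega\cap B_\delta$, using H\"older's inequality with $|x|^\beta\in L^t(B_\delta)$ for some $t>1$ satisfying $\beta t>-2$ together with the standard fact that $e^{\alpha'w^2}\in L^{t'}(\Omega)$ for all $w\in H^1_0(\Omega)$ and $\alpha'>0$.

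The main obstacle is the outer piece in the case $b_0>0$: there the target $\alpha$ may exceed $4\pi$, so the classical Trudinger--Moser inequality cannot absorb it, and one genuinely needs the radial structure to conclude that $u$ is uniformly bounded away from the origin. Once that is in place, the inner piece is essentially a citation of \eqref{2.5}--\eqref{2.6} up to the harmless rescaling $x\mapsto R_0y$, which costs nothing because the Dirichlet norm is conformally invariant in two dimensions.
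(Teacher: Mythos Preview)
Your proof is correct and follows essentially the same strategy as the paper: decompose $\Omega$ into a neighbourhood of the origin (where $Q$ is controlled by $|x|^{b_0}$ and one invokes the weighted Trudinger--Moser inequalities \eqref{2.5}--\eqref{2.6}, with the same rescaling to $B_1$ in the case $b_0>0$) and its complement (where the radial symmetry is used to control $u$).

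The only noteworthy difference lies in the treatment of the outer piece. The paper quotes the Strauss radial decay estimate (Lemma~\ref{lem1}), writes $e^{\alpha u^2}$ as a power series and bounds the integral term by term, obtaining an explicit bound of the form $C_2 e^{\alpha C^2\|u\|^2}$. You instead derive a direct uniform $L^\infty$ bound on $u$ over the annulus $\Omega\setminus B_\delta$ via the fundamental theorem of calculus and Cauchy--Schwarz, and then use the continuity of $Q$ on that compact set. Your route is slightly more elementary and self-contained (it avoids citing Lemma~\ref{lem1} and the tail condition $Q(r)\le C_0 r^b$ for large $r$, which is in any case irrelevant on a bounded domain), while the paper's version makes the dependence on $\|u\|$ explicit. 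Both correctly identify that when $b_0>0$ the radial structure is essential on the outer region, since the target exponent may exceed $4\pi$.
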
 
	\begin{proof}
		Fix $R>R_1>0$, where $R_1$ is given in Lemma \ref{lem1}. From the assumption \eqref{Q}, we can find $C_0>0$ such that
		\begin{equation}
			\label{2.7}
			Q(|x|) \leq C_0|x|^b, \ \forall \ |x| \geq R, \text{ and } \ Q(|x|) \leq C_0|x|^{b_0}, \ \forall \ 0<|x| \leq R.
		\end{equation}
		Using \eqref{2.7}, then we have
		\begin{align}
			\int\limits_{\Omega \cap B_R^c} Q^{\frac{4}{4-\mu}}(|x|)e^{\alpha u^2}dx = \int\limits_{\Omega \cap B_R^c} Q^{\frac{4}{4-\mu}}(|x|) \sum_{j=0}^{\infty} \frac{\alpha^j u^{2j}}{j!}dx \leq \sum_{j=0}^{\infty} \frac{\alpha^j }{j!}\int\limits_{\Omega \cap B_R^c} |x|^{\frac{4b}{4-\mu}} u^{2j}dx. \nonumber
		\end{align}
		Since $|x|^b$ is bounded on $\Omega \cap B_R^c$, for any $b\in \R$ and applying Lemma \ref{lem1}, we have
		\begin{align}
			\int\limits_{\Omega \cap B_R^c} Q^{\frac{4}{4-\mu}}(|x|)e^{\alpha u^2}dx\leq C_1\sum_{j=0}^{\infty} \frac{\alpha^j }{j!}\int\limits_{\Omega \cap B_R^c} u^{2j}dx \leq C_1\sum_{j=0}^{\infty} \frac{\alpha^j (C\|u\|)^{2j} }{j!}\int\limits_{\Omega \cap B_R^c} |x|^{-j}dx \leq C_2 e^{\alpha C^2\|u\|^2}. \nonumber
		\end{align}
		It follows that, for any $u\in \hr$, we get $Q^{\frac{4}{4-\mu}}(|x|)e^{\alpha u^2} \in$ $L^1(\Omega \cap B_R^c)$. Additionally, we can say that
		\begin{equation}
			\label{2.8}
			\sup_{u\in \hr, \|u\| \leq 1} \int\limits_{\Omega \cap B_R^c} Q^{\frac{4}{4-\mu}}(|x|)e^{\alpha u^2}dx < +\infty, \ \text{ for all } \ \alpha>0.
		\end{equation}
		Now, we estimate the Trudinger-Moser inequality on a ball $B_R$.
		To do this, we need to consider the following two cases:
		\begin{case}
			If $-\frac{4-\mu}{2} <b_0 \leq 0$, assumption \eqref{2.7} and the singular Trudinger-Moser inequality by \cite{Sandeep} implies that
			\begin{align}
				\int\limits_{\Omega \cap B_R} Q^{\frac{4}{4-\mu}}(|x|)e^{\alpha u^2}dx & \leq C_0\int\limits_{\Omega \cap B_R} |x|^{\frac{-4b_0}{4-\mu}}e^{\alpha u^2}dx < \infty, \ \forall \ u\in \hr. \nonumber
			\end{align}
			Moreover, from \eqref{2.5}
			\begin{equation*}
				\sup_{u\in H^1_{0,rad}(\Omega \cap B_R), \|u\|\leq 1} \int\limits_{B_R} |x|^{\beta}  e^{\alpha u^2} dx < \infty, \text{ if and only if }  \alpha \leq 4\pi \Big( 1+ \frac{2b_0}{4-\mu}\Big).
			\end{equation*}
		\end{case}
		\begin{case2}
			If $b_0>0$, by \eqref{2.7} and the classical Trudinger-Moser inequality, we have
			\begin{align*}
				\int\limits_{\Omega \cap B_R} Q^{\frac{4}{4-\mu}}(|x|)e^{\alpha u^2}dx & \leq C_0\int\limits_{\Omega \cap B_R} |x|^{\frac{4b_0}{4-\mu}} e^{\alpha u^2}dx \\
				& \leq C_0 R^{\frac{4b_0}{4-\mu}+2}\int\limits_{\Omega \cap B_1} |x|^{\frac{4b_0}{4-\mu}} e^{\alpha v^2}dx <\infty, \ \forall \ v\in H^1_{0,rad}(\Omega \cap B_1),
			\end{align*}
			where we have used change of variables as $v(x)=u(Rx)$, $x\in B_1$ and also $\|v\|_{L^2(B_1)}=\|u\|_{L^2(B_R)}$. Hence, we can apply a version of the Trudinger-Moser inequality given in \eqref{2.6}. Then, the above integral is finite for all $u\in H^1_{0,rad}(\Omega \cap B_R)$. Moreover, using \eqref{2.6}, we get
			\begin{equation*} 
				\sup_{u\in H^1_{0,rad}(\Omega \cap B_R), \|u\|\leq 1} \int\limits_{\Omega \cap B_R} |x|^{\frac{4b_0}{4-\mu}}  e^{\alpha u^2} dx < \infty, \ \forall \ \alpha \leq 4\pi\Big(1+\frac{2b_0}{4-\mu}\Big).
			\end{equation*}
		\end{case2}
		Combining both the cases and from \eqref{2.8}, we can conclude that the result holds.
	\end{proof}
	\begin{rem}
		\label{rem1}
		It is easy to show that if $u\in \h$, then $Q^{\frac{4}{4-\mu}}(|x|)e^{\alpha u^2} \in L^1(\Omega)$, for any $\alpha>0$. To see this, we consider
		\begin{align}
			\label{2.9}
			\int\limits_\Omega Q^{\frac{4}{4-\mu}}(|x|)e^{\alpha u^2}\ dx \leq C_0\int\limits_{\Omega \cap B_R} |x|^{\frac{4b_0}{4-\mu}}e^{\alpha u^2}\ dx + C_0\int\limits_{\Omega \cap B_R^c} |x|^{\frac{4b}{4-\mu}}e^{\alpha u^2}\ dx.
		\end{align}
		The second integral is finite by the classical Trudinger-Moser inequality together with the fact that $|x|^{\frac{4b}{4-\mu}}$ is bounded on $\Omega \cap B_R^c$. Since $b_0>-\frac{4-\mu}{2}$, choose $t>1$ such that $b_0 t>-\frac{4-\mu}{2}$. Thus, by H\"older's inequality, we have
		\begin{align*}
			\int\limits_{\Omega \cap B_R} |x|^{\frac{4b_0}{4-\mu}}e^{\alpha u^2}\ dx \leq \Bigg( \int\limits_{\Omega \cap B_R} |x|^{\frac{4b_0t}{4-\mu}}\ dx\Bigg)^{\frac{1}{t}} \Bigg( \int\limits_{\Omega \cap B_R} e^{\frac{\alpha t}{t-1} u^2} \ dx\Bigg)^{\frac{t-1}{t}} < \infty.
		\end{align*}
	\end{rem}
	The next lemma is an improvement of the classical Trudinger-Moser inequality. This result is introduced by Lions \cite{Lion}, and we see the following version:
	\begin{lem}
		\label{lem2.3}
		Let $\{u_n\} \subset H^1_{0,rad}(\Omega)$ be a sequence satisfying $\|u_n\|= 1$ for all $n\in \mathbb{N}$ and $u_n \rightharpoonup u $ in $H^1_{0,rad}(\Omega)$ and $0< \|u\| <1$, then for $ 0< p<\frac{4\pi}{1- \|u\|^2} \Big(1+ \frac{2b_0}{4-\mu}\Big)$, we have
		\[\sup_{n \in \mathbb{N}} \int\limits_{\Omega} Q(|x|)e^{ p |u_n|^2}  \,dx < +\infty. \quad\]
	\end{lem}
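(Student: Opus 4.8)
The plan is to adapt the Lions-type concentration argument to the weighted framework, with the weighted Trudinger--Moser inequality of Lemma \ref{lem2.2} playing the role of the classical one. Set $w_n := u_n - u$. Since $u_n \rightharpoonup u$ in $\hr$, testing the weak convergence against $u$ itself gives $\langle u_n, u\rangle \to \|u\|^2$, and therefore
\[
\|w_n\|^2 = \|u_n\|^2 - 2\langle u_n, u\rangle + \|u\|^2 \longrightarrow 1 - \|u\|^2 =: \tau,
\]
where $\tau \in (0,1)$ because $0 < \|u\| < 1$. In particular $\|w_n\| > 0$ for all $n$ large and $w_n \rightharpoonup 0$.

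Next I would split the exponent pointwise. For any $\varepsilon > 0$, Young's inequality yields
\[
u_n^2 = (w_n + u)^2 \leq (1+\varepsilon)\, w_n^2 + \Big(1 + \tfrac{1}{\varepsilon}\Big) u^2,
\]
so that $e^{p u_n^2} \leq e^{p(1+\varepsilon) w_n^2}\, e^{p(1 + 1/\varepsilon) u^2}$ pointwise, and Hölder's inequality with conjugate exponents $q, q' > 1$ gives
\[
\int_\Omega Q(|x|)\, e^{p u_n^2}\, dx \leq \Big( \int_\Omega Q(|x|)\, e^{p(1+\varepsilon) q\, w_n^2}\, dx \Big)^{1/q} \Big( \int_\Omega Q(|x|)\, e^{p(1 + 1/\varepsilon) q'\, u^2}\, dx \Big)^{1/q'}.
\]
The second factor is a finite constant depending only on $u, p, \varepsilon, q$: since $u$ is fixed and $Q(|x|) \leq C_0 |x|^{b_0}$ near the origin with $b_0 > -\tfrac{4-\mu}{2} > -2$, one Hölder splitting exactly as in Remark \ref{rem1} shows $\int_\Omega Q(|x|)\, e^{\alpha u^2}\, dx < \infty$ for every $\alpha > 0$.

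It then remains to bound the first factor uniformly in $n$. For $n$ large set $v_n := w_n/\|w_n\|$, so $\|v_n\| = 1$, and rewrite
\[
\int_\Omega Q(|x|)\, e^{p(1+\varepsilon) q\, w_n^2}\, dx = \int_\Omega Q(|x|)\, e^{\beta_n v_n^2}\, dx, \qquad \beta_n := p(1+\varepsilon)\, q\, \|w_n\|^2.
\]
The hypothesis $p < \tfrac{4\pi}{1-\|u\|^2}\big(1 + \tfrac{2b_0}{4-\mu}\big)$ says precisely that $p\,\tau < 4\pi\big(1 + \tfrac{2b_0}{4-\mu}\big)$ with strict inequality; since $\|w_n\|^2 \to \tau$ I may fix $\varepsilon > 0$ small and $q > 1$ close to $1$ so that $p(1+\varepsilon) q\, \tau < 4\pi\big(1 + \tfrac{2b_0}{4-\mu}\big)$, and then $\beta_n \leq 4\pi\big(1 + \tfrac{2b_0}{4-\mu}\big)$ for all $n$ large enough. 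For such $n$, Lemma \ref{lem2.2} bounds $\int_\Omega Q(|x|)\, e^{\beta_n v_n^2}\, dx$ by a constant independent of $n$. Multiplying the two factors gives a uniform bound on $\int_\Omega Q(|x|)\, e^{p u_n^2}\, dx$ for all large $n$, while each of the finitely many remaining integrals is finite by the first assertion of Lemma \ref{lem2.2} applied to the individual function $u_n$. Taking the supremum over $n \in \mathbb{N}$ finishes the proof.

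The main obstacle is essentially bookkeeping: one must verify that $\varepsilon$ and $q$ can be chosen simultaneously so that $\beta_n$ stays at or below the critical level $4\pi\big(1+\tfrac{2b_0}{4-\mu}\big)$ for all large $n$, which is possible precisely because the hypothesis on $p$ is a strict inequality and $\|w_n\|^2 \to 1 - \|u\|^2$. The possible singularity of $Q$ at the origin causes no additional trouble here, since it has already been absorbed into Lemma \ref{lem2.2}.
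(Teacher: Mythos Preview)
Your argument is the standard Lions concentration--compactness adaptation and is exactly what the paper has in mind when it writes ``The proof can be done by using a similar idea as in [O]''; the paper gives no further details, so your write-up is in fact more complete than theirs. One small bookkeeping point: Lemma~\ref{lem2.2} and Remark~\ref{rem1} are stated for the weight $Q^{4/(4-\mu)}$, not $Q$, and indeed the paper's later application of Lemma~\ref{lem2.3} (in the proof of Lemma~\ref{lem4.7}) uses $Q^{4/(4-\mu)}$, so the $Q$ in the statement you were given is almost certainly a typo --- either read it as $Q^{4/(4-\mu)}$ throughout, or note that the proof of Lemma~\ref{lem2.2} goes through verbatim with $Q$ in place of $Q^{4/(4-\mu)}$ (with an even better threshold, since $b_0 > -(4-\mu)/2 > -2$).
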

	The proof can be done by using a similar idea as in \cite{O}.

	\section{Variational framework}
	\label{sec3}
	
	First, we have to show that the functional $J$ introduced in \eqref{J} is well-defined in $\h$. For this, we need to prove the nonlocal term is well-defined. This can be done by using the Proposition \ref{Hardy}.
	
	Given $u \in \h$, taking $s=r= \frac{4}{4-\mu}$ in Proposition \ref{Hardy}, we have
	\begin{align}
		\left| \int\limits_\Omega \Bigg( \int\limits_\Omega \frac{Q(|y|)F(u(y))}{|x-y|^\mu} dy\Bigg) Q(|x|)F(u(x))dx \right| 
		\leq C_\mu\Bigg( \int\limits_{\Omega} Q^{\frac{4}{4-\mu}}(|x|)|F(u(x))|^{\frac{4}{4-\mu}} dx \Bigg)^{\frac{4-\mu}{2}}. \nonumber
	\end{align}
	We use \eqref{3.2} with $\alpha> \alpha_0$ and $q \geq 1$, then
	\begin{align}
		\int\limits_{\Omega} Q^{\frac{4}{4-\mu}}(|x|)|F(u(x))|^{\frac{4}{4-\mu}} dx & \leq C\int\limits_{\Omega} Q^{\frac{4}{4-\mu}}(|x|)|u|^2 dx + C_1\int\limits_\Omega Q^{\frac{4}{4-\mu}}(|x|)|u|^{\frac{4q}{4-\mu}} e^{\frac{4 \alpha}{4-\mu} |u|^2} dx. \nonumber
	\end{align}
	Using Lemma \ref{lem3} with $p=\frac{4-\mu}{2}$ to obtain $\int\limits_{\Omega} Q^{\frac{4}{4-\mu}}(|x|)|u|^2 dx \leq C_q\|u\|^2$.
	By H\"older's inequality and again using Lemma \ref{lem3} with $p=qv$, 
	\begin{align} 
		\int\limits_{\Omega} Q^{\frac{4}{4-\mu}}(|x|)|u|^{\frac{4q}{4-\mu}} e^{\frac{4\alpha |u|^2}{4-\mu}} dx  &\leq \Bigg(\int\limits_{\Omega} Q^{\frac{4}{4-\mu}}(|x|)|u|^{\frac{4qv}{4-\mu}} dx\Bigg)^{\frac{1}{v}} \Bigg( \int\limits_\Omega Q^{\frac{4}{4-\mu}}(|x|)e^{\frac{4\alpha v'|u|^2 }{4-\mu}} dx\Bigg)^{\frac{1}{v'}} \nonumber\\
		& \leq C_{qv}^{\1} \|u\|^{\frac{4q}{4-\mu}} \Bigg( \int\limits_\Omega Q^{\frac{4}{4-\mu}}(|x|)e^{\frac{4\alpha v'|u|^2}{4-\mu}}dx\Bigg)^{\frac{1}{v'}},\nonumber
	\end{align}
	where we have used H\"older's inequality with $v>1$, $\frac{1}{v} + \frac{1}{v'} = 1$. Combining the above relations with Lemma \ref{lem2.2} and Remark \ref{rem1}, we get 
	\begin{align}
		&\left| \int\limits_\Omega \Bigg( \int\limits_\Omega \frac{Q(|y|)F(u(y))}{|x-y|^\mu} dy\Bigg) Q(|x|)F(u(x))dx \right| \nonumber\\
		& \leq C_\mu \Bigg( C C_q \|u\|^2+ C_1 C_{qv}^{\1} \|u\|^{\frac{4q}{4-\mu}} \Bigg( \int\limits_\Omega Q^{\frac{4}{4-\mu}}(|x|)e^{\frac{4\alpha v'|u|^2}{4-\mu}}dx\Bigg)^{\frac{1}{v'}} \Bigg)^{\frac{4-\mu}{2}} \nonumber \\
		& \leq C \|u\|^{4-\mu} + C \|u\|^{2q} \Bigg( \int\limits_\Omega Q^{\frac{4}{4-\mu}}(|x|)e^{\frac{4\alpha v'|u|^2}{4-\mu}}dx\Bigg)^{\frac{4-\mu}{2v'}} <+\infty \nonumber.
	\end{align}
	Hence, the functional $J$ is well-defined on $\h$. Using the standard arguments, we can say that $J$ is $C^1$ and  for all $v \in H^1_{0}(\Omega)$, its derivative is given by
	\[   J'(u)v= \int\limits_\Omega \nabla u \cdot \nabla v\ dx- \lambda \int\limits_\Omega u \cdot v\ dx -\int\limits_\Omega \Bigg( \int\limits_\Omega \frac{Q(|y|)F(u(y))}{|x-y|^{\mu}}dy \Bigg) Q(|x|)f(u(x))v\ dx.\]
	Clearly, by the definition of weak solution, if $u \in \h$ is a critical point of $J$, then it is a weak solution of \eqref{1.1}. As discussed earlier, because of the weight function $Q$, which can be singular at zero, we have used a version of Trudinger-Moser inequality in the radial space $\hr$ for an improved range of $\alpha$. From now on, we restrict ourselves to $\hr$ as function space rather than $\h$.
	
	Now, we recall the definition of the Palais-Smale sequence.
	\begin{definition}  We say that a sequence $\{u_n\} \subset H^1_{0,rad}(\Omega)$ is a Palais-Smale sequence for $J$ at a level $c$ if $J(u_n) \rightarrow c$ and $J'(u_n) \rightarrow 0$ as $n \rightarrow \infty$. Moreover, we say that $J$ satisfies the Palais-Smale condition at level $c$ if every Palais-Smale sequence has a convergent subsequence in $H^1_{0,rad}(\Omega)$.
	\end{definition}
	The following lemma is very crucial for our main results. In the proof, we have used assumption \eqref{AR}, also known as the Ambrosetti-Rabonowitz condition.
	\begin{lem}
		\label{lem3.1}
		Assume \eqref{H1}, \eqref{AR} holds and if $\{ u_n\} \subset \hr $ be a Palais-Smale sequence of $J$, then $\{u_n\}$ is a bounded sequence in $\hr$.
	\end{lem}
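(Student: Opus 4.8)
The plan is to argue by contradiction: suppose $\{u_n\}$ is a Palais--Smale sequence at level $c$ with $\|u_n\|\to\infty$ (along a subsequence). The standard device here is to exploit the Ambrosetti--Rabinowitz condition \eqref{AR} to compare $J(u_n)$ with $J'(u_n)u_n$. Since $J(u_n)\to c$ and $J'(u_n)\to 0$ in the dual of $\hr$, we have $J(u_n)=c+o_n(1)$ and $J'(u_n)u_n = o_n(1)\|u_n\|$.

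First I would compute, using the explicit expressions for $J$ in \eqref{J} and for $J'(u)v$,
\[
J(u_n) - \frac{1}{2K} J'(u_n)u_n
= \Big(\frac12 - \frac{1}{2K}\Big)\Big(\|u_n\|^2 - \lambda\|u_n\|_2^2\Big)
+ \frac{1}{2K}\,\mathcal{N}(u_n),
\]
where $\mathcal{N}(u_n)$ denotes the nonlocal contribution, namely
\[
\mathcal{N}(u_n)=\int\limits_\Omega\!\Bigg(\int\limits_\Omega \frac{Q(|y|)F(u_n(y))}{|x-y|^\mu}dy\Bigg)Q(|x|)\big(f(u_n(x))u_n(x) - F(u_n(x))\big)\,dx.
\]
By \eqref{AR}, for $s>0$ one has $f(s)s - F(s)\ge (K-1)F(s)\ge 0$, and since $F\ge 0$ and the Riesz kernel is positive, the entire double integral $\mathcal{N}(u_n)$ is nonnegative; hence it can be dropped to get a lower bound. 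This yields
\[
c + o_n(1) + \frac{1}{2K}\,o_n(1)\|u_n\| \;\ge\; \Big(\frac12 - \frac{1}{2K}\Big)\big(\|u_n\|^2 - \lambda\|u_n\|_2^2\big).
\]

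The remaining issue is the sign of $\|u_n\|^2-\lambda\|u_n\|_2^2$, which need not be nonnegative when $\lambda>\lambda_1$ (this is the real obstacle, and it is why the non-resonant hypothesis matters rather than just the $(0,\lambda_1)$ case). To handle it I would use the spectral decomposition $\hr\subset\h = H_k(\Omega)\oplus H_k^\perp(\Omega)$ and write $u_n = v_n + w_n$ with $v_n\in H_k(\Omega)$, $w_n\in H_k^\perp(\Omega)$ (for $\lambda\in(\lambda_k,\lambda_{k+1})$; when $\lambda\in(0,\lambda_1)$ take $k=0$ so that $v_n=0$). By \eqref{1.8}, $\|v_n\|^2\le \lambda_k\|v_n\|_2^2$, and by \eqref{1.7}, $\|w_n\|^2\ge\lambda_{k+1}\|w_n\|_2^2$, so using $\lambda_k<\lambda<\lambda_{k+1}$ one gets $\|u_n\|^2 - \lambda\|u_n\|_2^2 \ge (1-\tfrac{\lambda}{\lambda_{k+1}})\|w_n\|^2 - (\tfrac{\lambda}{\lambda_k}-1)\|v_n\|^2$. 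Since $H_k(\Omega)$ is finite-dimensional, all norms on it are equivalent and $\|v_n\|\le C$ would follow once $\|w_n\|$ is controlled; conversely, the displayed inequality forces $\|w_n\|$ to be bounded unless $\|v_n\|\to\infty$. One then runs a more careful version of the contradiction argument: if $\|u_n\|\to\infty$, the dominant part is controlled coercively on $H_k^\perp$ while on the finite-dimensional piece $H_k$ one uses that $\|v_n\|_\infty\le C\|v_n\|$ (finite dimension), so $F(u_n)$ restricted to the region where $v_n$ dominates is controlled, and again $\mathcal{N}(u_n)\ge0$ kills the bad term; normalizing $t_n=u_n/\|u_n\|$ and passing to a weak limit in $\hr$ (reflexive, and $H_k$ finite-dimensional gives strong convergence there) leads to a contradiction with $\|t_n\|=1$. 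I expect the balancing of the indefinite quadratic form against the nonnegativity of $\mathcal{N}$ on the finite-dimensional subspace to be the technically delicate point; everything else (the algebra with \eqref{J}, \eqref{AR}, and the eigenvalue characterizations \eqref{1.7}, \eqref{1.8}) is routine.
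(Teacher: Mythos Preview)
Your linear combination $J(u_n)-\tfrac{1}{2K}J'(u_n)u_n$ correctly yields
\[
C+o_n(1)\|u_n\|\;\ge\;\Big(\tfrac12-\tfrac{1}{2K}\Big)\big(\|u_n\|^2-\lambda\|u_n\|_2^2\big),
\]
and for $\lambda\in(0,\lambda_1)$ this finishes the job since the quadratic form is coercive. For $\lambda\in(\lambda_k,\lambda_{k+1})$, however, your closing sketch has a genuine gap. After normalizing $t_n=u_n/\|u_n\|$, the spectral splitting and your displayed inequality only give $\|t_n\|_2^2\to 1/\lambda$, so the $L^2$-limit $t\ne0$; this by itself is \emph{not} a contradiction with $\|t_n\|=1$, and your remark that ``$F(u_n)$ restricted to the region where $v_n$ dominates is controlled, and again $\mathcal N(u_n)\ge0$ kills the bad term'' does not describe any concrete mechanism. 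What is missing is a return to the nonlinearity: on $\{t>0\}$ one has $u_n\to+\infty$ a.e., and integrating \eqref{AR} gives $F(s)\ge cs^K$ for large $s$, forcing the nonlocal $F$-integral to grow like $\|u_n\|^{2K}$, which contradicts its bound $\le C+o_n(1)\|u_n\|$. You would still need to rule out $t\le0$ a.e.\ (recall $F$ vanishes on $(-\infty,0]$ by \eqref{H1}), so even this repair is not automatic.

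The paper takes a different and more direct route that avoids normalization entirely. It first records (your combination, rearranged) that the nonlocal $f(u_n)u_n$-integral is $\le C_1+C_2\varepsilon_n\|u_n\|$, and then tests $J'(u_n)$ against $u_n^k\in H_{k,r}$ and $u_n^\perp\in H_{k,r}^\perp$ \emph{separately}. This produces cross-terms of the form $\int\big(\int\tfrac{QF(u_n)}{|x-y|^\mu}\big)Qf(u_n)\,u_n^k$, which are handled by pulling out $\|u_n^k\|_\infty\le C\|u_n^k\|$ (finite dimension) and proving the nontrivial estimate
\[
\int\limits_\Omega\Big(\int\limits_\Omega\frac{Q(|y|)F(u_n(y))}{|x-y|^\mu}\,dy\Big)Q(|x|)\,|f(u_n(x))|\,dx\;\le\;C_3+C_4\varepsilon_n\|u_n\|,
\]
obtained by splitting $\Omega$ into $\{|u_n|\ge1\}$ and $\{|u_n|\le1\}$ and applying the Cauchy--Schwarz inequality for the Riesz form on the latter piece. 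The outcome is a pair of inequalities linear in $\|u_n^k\|,\|u_n^\perp\|,\|u_n\|$ that force $\|u_n\|$ bounded directly, with no contradiction or blow-up argument needed.
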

	\begin{proof}
		Let $\{u_n\}$ be a Palais-Smale sequence of $J$ at a level $c$ in $\hr$, then by definition 
		\begin{equation}
			\1 \int\limits_\Omega |\nabla u_n|^2 dx -\frac{\lambda}{2}  \int\limits_\Omega |u_n|^2 dx - \1 \int\limits_\Omega \Bigg( \int\limits_\Omega \frac{Q(|y|)F(u_n(y))}{|x-y|^\mu} dy \Bigg) Q(|x|)F(u_n(x)) dx \rightarrow c  \label{3.4}
		\end{equation}  as $n \rightarrow \infty$, and for all $v \in \hr$,
		\begin{equation}
			\left| \int\limits_\Omega \nabla u_n \cdot \nabla v dx -\lambda\int\limits_\Omega u_n \cdot v dx -\int\limits_\Omega \Bigg( \int\limits_\Omega \frac{Q(|y|)F(u_n(y))}{|x-y|^\mu} dy\Bigg) Q(|x|)f(u_n(x))v(x)\ dx \right| \leq \varepsilon_n \|v\|,  \label{3.5}
		\end{equation}
		where $\varepsilon_n \rightarrow 0 \text{ as } n \rightarrow \infty$. Putting $v=u_n$ in \eqref{3.5} and using \eqref{3.4}, there exists $C>0$ such that,
		\begin{align}
			&\int\limits_\Omega \Bigg( \int\limits_\Omega \frac{Q(|y|)F(u_n(y))}{|x-y|^\mu} dy\Bigg) Q(|x|)f(u_n(x)) u_n(x)\ dx \leq \int\limits_\Omega |\nabla u_n|^2 dx - \lambda \int\limits_\Omega |u_n|^2 dx + \varepsilon_n \|u_n\| \nonumber\\
			& \quad \ \ \leq 2C + \int\limits_\Omega \Bigg( \int\limits_\Omega \frac{Q(|y|)F(u_n(y))}{|x-y|^\mu} dy \Bigg) Q(|x|)F(u_n(x)) dx + \varepsilon_n \|u_n\|. \nonumber\\
			& \quad \ \ \leq 2C + \frac{1}{K} \int\limits_\Omega \Bigg( \int\limits_\Omega \frac{Q(|y|)F(u_n(y))}{|x-y|^\mu} dy\Bigg) Q(|x|)f(u_n(x)) u_n(x) dx + \varepsilon_n \|u_n\| \nonumber,
		\end{align}
		where at the last step, we had applied \eqref{AR}. We can find constants $C_1, C_2>0$ such that
		\begin{equation}
			\int\limits_\Omega \Bigg( \int\limits_\Omega \frac{Q(|y|)F(u_n(y))}{|x-y|^\mu} dy\Bigg) Q(|x|)f(u_n(x)) u_n(x) dx \leq C_1 + C_2 \varepsilon_n \|u_n\|.  \label{3.6}
		\end{equation}
		Now, we consider the following two cases:
		\begin{case}
			When $0< \lambda < \lambda_1$.
		\end{case}	
		Since $\hr \subset \h$, using characterization of $\lambda_1$ given in \eqref{1.5}, we get
		\begin{align*}
			\lambda_1 = \min_{u\in \h \setminus \{0\}} \frac{\|u\|^2}{\|u\|^2_2} \leq \min_{u\in \hr \setminus \{0\}} \frac{\|u\|^2}{\|u\|^2_2}
		\end{align*}
		Therefore, $ \displaystyle\|u\|^2_2\leq \frac{1}{\lambda_1}\|u\|^2$ for all $u\in \hr$.
		Taking $v= u_n$ as test function in \eqref{3.5} together with \eqref{3.6}, we get
		\begin{align*}
			\|u_n\|^2 &\leq \lambda \int\limits_\Omega |u_n|^2 dx + \int\limits_\Omega \Bigg( \int\limits_\Omega \frac{Q(|y|)F(u_n(y))}{|x-y|^\mu} dy\Bigg) Q(|x|)f(u_n(x)) u_n(x)\ dx + \varepsilon_n \|u_n\| \\
			& \leq \frac{\lambda}{\lambda_1} \|u_n\|^2 + C_1 + C_2 \varepsilon_n \|u_n\| + \varepsilon_n\|u_n\|,
		\end{align*}
		since $\lambda < \lambda_1$, it follows that $\{ u_n\}$ is a bounded sequence in $\hr$.
		
		\begin{case2}
			When $\lambda_k < \lambda < \lambda_{k+1}$, for $k \geq 1$.
		\end{case2}
		
		We can decompose the radial space $\hr$ as similar to $\h$ by defining two subspaces, $H_{k,r}(\Omega) = \hr \cap H_k(\Omega)$ and $H^{\perp}_{k,r}(\Omega) = \hr \cap H^\perp_k(\Omega)$  of $\hr$. This implies that for any $u \in \hr$, we can write $u= u^k+ u^\perp$, where $u^k \in H_{k,r}(\Omega)$ and $u^\perp \in H^\perp_{k,r}(\Omega)$. It is easy to verify that 
		\begin{align}
			\int\limits_\Omega \nabla u \cdot \nabla u^k dx - \lambda \int\limits_\Omega u \cdot u^k dx &= \|u^k\|^2 - \lambda \|u^k\|^2_2 ~~~\text{ and } \label{3.7}\\
			\int\limits_\Omega \nabla u \cdot \nabla u^\perp dx - \lambda \int\limits_\Omega u \cdot u^\perp dx &= \|u^\perp\|^2 - \lambda \|u^\perp\|^2_2 \label{3.8}.
		\end{align}
		Using this with \eqref{3.5} and \eqref{3.7}, we have 
		\begin{align}
			-\varepsilon_n\|u_n^k\| &\leq \int\limits_\Omega \nabla u_n \nabla u_n^k dx - \lambda \int\limits_\Omega u_n u_n^k dx - \int\limits_\Omega \Bigg( \int\limits_\Omega \frac{Q(|y|)F(u_n(y))}{|x-y|^\mu} dy\Bigg) Q(|x|)f(u_n(x)) u_n^k(x)\ dx \nonumber\\
			& \leq \|u^k_n\|^2 - \lambda \|u^k_n\|^2_2 + \int\limits_\Omega \Bigg( \int\limits_\Omega \frac{Q(|y|)F(u_n(y))}{|x-y|^\mu} dy\Bigg)Q(|x|) \left|f(u_n(x)) u_n^k(x) \right| dx. \nonumber
		\end{align}
		Since $H_{k,r}(\Omega) \subset H_k(\Omega)$, from the characterization of $\lambda_k$ in \eqref{1.7}, we obtain
		\begin{align*}
			\lambda_k = \max_{u\in H_k(\Omega) \setminus \{0\}} \frac{\|u\|^2}{\|u\|_2^2} \geq \max_{u\in H_{k,r}(\Omega) \setminus \{0\}} \frac{\|u\|^2}{\|u\|_2^2}
		\end{align*} 
		This implies that for all $u\in H_{k,r}(\Omega)$, we have $\displaystyle\|u\|_2^2 \geq \frac{1}{\lambda_k} \|u\|^2$. Thus, using the above two relations, we have
		\begin{align}
			-\varepsilon_n\|u_n^k\|& \leq \Bigg(\frac{\lambda_k -\lambda}{\lambda_k}\Bigg) \|u_n^k\|^2 + \|u_n^k\|_\infty \int\limits_\Omega \Bigg( \int\limits_\Omega \frac{Q(|y|)F(u_n(y))}{|x-y|^\mu} dy\Bigg) Q(|x|)|f(u_n(x))| dx, \label{3.10}
		\end{align}
		where we applied H\"older's inequality. To complete the proof, we need to show that there exist constants $C_3, C_4 >0$ such that
		\begin{equation}
			\int\limits_\Omega \Bigg( \int\limits_\Omega \frac{Q(|y|)F(u_n(y))}{|x-y|^\mu} dy\Bigg) Q(|x|)|f(u_n(x))| dx \leq C_3 + C_4 \varepsilon_n \|u_n\|. \label{3.11}
		\end{equation}
		In the next steps, we prove \eqref{3.11}. For this, we define two subsets of $\Omega$ as
		\[  \Omega_n = \{ x\in \Omega: |u_n(x)| \geq 1\} \text{ and } \Omega'_n = \{ x\in \Omega: |u_n(x)| \leq 1\}, \text{ for some fixed n}.\]
		Since $|u_n(x)| \geq 1$ on $\Omega_n$, we have
		\begin{align}
			\int\limits_{\Omega_n} &\Bigg( \int\limits_\Omega  \frac{Q(|y|)F(u_n(y))}{|x-y|^\mu}dy\Bigg) Q(|x|)\left| f(u_n(x))\right| dx \nonumber\\
			&\quad \leq \int\limits_{\Omega_n}\Bigg( \int\limits_\Omega \frac{Q(|y|)F(u_n(y))}{|x-y|^\mu}dy\Bigg) Q(|x|)\left| f(u_n(x))u_n(x)\right|\ dx\nonumber
		\end{align}
		Combining equation \eqref{3.6} and the above, 
		\begin{align}
			\int\limits_{\Omega_n} \Bigg( \int\limits_\Omega  \frac{Q(|y|)F(u_n(y))}{|x-y|^\mu}dy\Bigg) Q(|x|)\left| f(u_n(x))\right| dx \leq C_1 +C_2 \varepsilon_n \|u_n\|. \label{4.1}
		\end{align}
		Now, we estimate the above integral on the set $\Omega'_n$. From Cauchy-Schwarz inequality \cite{Cauchy}, we have
		\begin{align}
			&\int\limits_{\Omega_n'} \Bigg( \int\limits_\Omega \frac{Q(|y|)F(u_n(y))}{|x-y|^\mu}dy\Bigg) Q(|x|)\left| f(u_n(x))\right| dx \nonumber\\
			&  \leq \left[ \int\limits_{\Omega_n'} \Bigg( \int\limits_{\Omega_n'} \frac{Q(|y|)|f(u_n(y))|}{|x-y|^\mu}dy\Bigg) Q(|x|)\left| f(u_n(x))\right| dx \right]^{\1}\times\nonumber\\
			& \quad \quad\left[\int\limits_{\Omega} \Bigg( \int\limits_\Omega \frac{Q(|y|)F(u_n(y))}{|x-y|^\mu}dy\Bigg) Q(|x|) F(u_n(x)) dx\right]^{\1} \nonumber\\
			&  = I_1^{\1} I_2^{\1}. \nonumber
		\end{align}
		Let us first estimate $I_2$. From the assumption \eqref{AR} and equation \eqref{3.6}, we obtain
		\begin{align}
			I_2 & \leq \frac{1}{K}\int\limits_{\Omega} \Bigg( \int\limits_\Omega \frac{Q(|y|)F(u_n(y))}{|x-y|^\mu}dy\Bigg) Q(|x|) f(u_n(x)) u_n(x)\ dx  \leq \frac{1}{K} (C_1+C_2 \varepsilon_n \|u_n\|). \nonumber
		\end{align}
		
		Now, we estimate the integral $I_1$ by Proposition \ref{Hardy}, we have
		\begin{align}
			I_1  = \int\limits_{\Omega_n'} \Bigg( \int\limits_{\Omega_n'} \frac{Q(|y|)|f(u_n(y))|}{|x-y|^\mu}dy\Bigg) Q(|x|)\left| f(u_n(x))\right| dx  \leq C_\mu \Bigg( \int\limits_{\Omega_n'} Q^{\frac{4}{4-\mu}}(|x|) |f(u_n)|^{\frac{4}{4-\mu}}dx\Bigg)^{\frac{4-\mu}{2}}.\nonumber
		\end{align}
		Since $|u_n(x)| \leq 1$ on the set $\Omega_n'$, it follows that from \eqref{3.1} with $q>1$, we can find a constant $C_0>0$ such that
		\begin{align*}
			|f(u_n)| \leq \varepsilon |u_n|^{(2 - \mu)/2} + C|u_n|^{q-1}e^{\alpha |u_n|^2} \leq C_0. 
		\end{align*}
		Using the above inequality together with the assumption \eqref{Q}, there exists $R>0$ such that we can find a constant $\overline{C}>0$ satisfying,
		\begin{align}
			I_1 &\leq C_\mu C_0^2 \Bigg( \int\limits_{\Omega_n'} Q^{\frac{4}{4-\mu}}(|x|) dx\Bigg)^{\frac{4-\mu}{2}}\leq C \Bigg( \int\limits_{\Omega \cap B_R} |x|^{\frac{4b_0}{4-\mu}}dx \Bigg)^{\frac{4-\mu}{2}} + C\Bigg( \int\limits_{\Omega \cap B_R^c} |x|^{\frac{4b}{4-\mu}}dx\Bigg)^{\frac{4-\mu}{2}}\nonumber\\
			& \leq C R^{4+2b_0-\mu} +C \leq \overline{C}, \nonumber
		\end{align}
		this follows from the fact that $|x|^b$ is bounded on $\Omega \cap B_R^c$ for $b\in \R$ and $b_0 > -\frac{4-\mu}{2}$. 
		
		We conclude by combining all the above relations, there exist $C_3, C_4>0$ such that
		\begin{align}
			\label{4.2}
			\int\limits_{\Omega_n'} \Bigg( \int\limits_\Omega \frac{Q(|y|)F(u_n(y))}{|x-y|^\mu}dy\Bigg) Q(|x|)\left| f(u_n(x))\right| dx \leq C_3 + C_4 \varepsilon_n \|u_n\|.
		\end{align}
		Hence, \eqref{3.11} follows by combining \eqref{4.1} and \eqref{4.2}.
		
		Since $H_{k,r}(\Omega)$ is a finite dimensional subspace, it is well known that in a finite dimensional subspace, all the norms are equivalent. Therefore, we can find a constant $C'>0$ such that $\|u_n^k\|_\infty \leq C'\|u_n^k\|$.  Therefore, by \eqref{3.10} and \eqref{3.11}, we get
		\begin{align}
			\Bigg(\frac{\lambda -\lambda_k}{\lambda_k}\Bigg) \|u_n^k\|^2 &\leq \varepsilon_n\|u_n^k\| + C' \|u_n^k\| \int\limits_\Omega \Bigg( \int\limits_\Omega \frac{Q(|y|)F(u_n(y))}{|x-y|^\mu} dy\Bigg) Q(|x|)|f(u_n(x))| dx \nonumber\\
			& \leq \varepsilon_n\|u_n^k\| + C' \|u_n^k\| \big(C_3 + C_4 \varepsilon_n \|u_n\| \big). \nonumber
		\end{align}
		Since $\lambda > \lambda_k$, we can choose a positive constant $C$ such that
		\begin{equation}
			\|u_n^k\|^2 \leq C \big( \|u_n^k\| + \epsilon_n\|u_n^k\|+ \epsilon_n \|u_n^k\| \|u_n\| \big). \label{3.12}
		\end{equation}
		By the variational characterization of $\lambda_{k+1}$ given in \eqref{1.7} and using $H_{k,r}^\perp(\Omega) \subset H_k^\perp(\Omega)$, we obtain 
		\begin{align*}
			\lambda_{k+1} = \min_{u\in H_k^\perp \setminus \{0\}} \frac{\|u\|^2}{\|u\|^2_2} \leq \min_{u\in H_{r,k}^\perp \setminus \{0\}} \frac{\|u\|^2}{\|u\|^2_2}.
		\end{align*}
		Then for $u\in H_{k,r}^\perp(\Omega)$, we have $\displaystyle\|u\|_2^2 \leq \frac{1}{\lambda_{k+1}} \|u\|^2$. Again, by \eqref{3.5} and \eqref{3.8}   
		\begin{align}
			\varepsilon_n\|u_n^k\| &\geq \int\limits_\Omega \nabla u_n \cdot \nabla u_n^\perp dx - \lambda \int\limits_\Omega u_n \cdot u_n^\perp dx - \int\limits_\Omega \Bigg( \int\limits_\Omega \frac{Q(|y|)F(u_n(y))}{|x-y|^\mu} dy\Bigg) Q(|x|)f(u_n(x)) u_n^\perp(x) dx \nonumber\\
			& = \|u^\perp_n\|^2 - \lambda \|u^\perp_n\|^2_2 - \int\limits_\Omega \Bigg( \int\limits_\Omega \frac{Q(|y|)F(u_n(y))}{|x-y|^\mu} dy\Bigg) Q(|x|)f(u_n(x)) u_n^\perp dx  \nonumber\\
			& \geq \Bigg(\frac{\lambda_{k+1} -\lambda}{\lambda_{k+1}}\Bigg) \|u_n^\perp\|^2 - \int\limits_\Omega \Bigg( \int\limits_\Omega \frac{Q(|y|)F(u_n(y))}{|x-y|^\mu} dy\Bigg) Q(|x|)f(u_n(x)) u_n dx \nonumber\\
			& \quad+ \int\limits_\Omega \Bigg( \int\limits_\Omega \frac{Q(|y|)F(u_n(y))}{|x-y|^\mu} dy\Bigg) Q(|x|)f(u_n(x)) u_n^k dx. \nonumber
		\end{align}
		Using \eqref{3.6}, we have
		\begin{align}
			\varepsilon_n\|u_n^k\| & \geq \Bigg(\frac{\lambda_{k+1} -\lambda}{\lambda_{k+1}}\Bigg) \|u_n^\perp\|^2 - C_1 - C_2 \epsilon_n \|u_n\|\nonumber\\
			&- \int\limits_\Omega \Bigg( \int\limits_\Omega \frac{Q(|y|)F(u_n(y))}{|x-y|^\mu} dy\Bigg)Q(|x|) \left|f(u_n(x))u_n^k\right| dx. \nonumber
		\end{align}
		By H\"older's inequality, we get
		\begin{align}
			\varepsilon_n\|u_n^k\|	& \geq \Bigg(\frac{\lambda_{k+1} -\lambda}{\lambda_{k+1}}\Bigg) \|u_n^\perp\|^2 - C_1 - C_2 \epsilon_n \|u_n\| \nonumber\\
			& \quad- \|u_n^k\|_\infty\int\limits_\Omega \Bigg( \int\limits_\Omega \frac{Q(|y|)F(u_n(y))}{|x-y|^\mu} dy\Bigg) Q(|x|)|f(u_n(x))| dx. \nonumber
		\end{align}
		Since $H_{k,r}(\Omega)$ is a finite dimensional subspace, there exists a constant $C'>0$ such that $\|u_n^k\|_\infty \leq C'\|u_n^k\|$. This together with \eqref{3.11} gives
		\begin{align}
			\Bigg(\frac{\lambda_{k+1} -\lambda}{\lambda_{k+1}}\Bigg) \|u_n^\perp\|^2 &\leq \varepsilon_n\|u_n^\perp\|+ C_1 + C_2 \varepsilon_n \|u_n\| \nonumber\\
			& \quad+ \|u_n^k\|_\infty\int\limits_\Omega \Bigg( \int\limits_\Omega \frac{Q(|y|)F(u_n(y))}{|x-y|^\mu} dy\Bigg) Q(|x|)|f(u_n(x))| dx, \nonumber\\
			& \leq \varepsilon_n\|u_n^\perp\|+ C_1 + C_2 \varepsilon_n \|u_n\| \nonumber\\
			&\quad+ C'\|u_n^k\|\int\limits_\Omega \Bigg( \int\limits_\Omega \frac{Q(|y|)F(u_n(y))}{|x-y|^\mu} dy\Bigg) Q(|x|)|f(u_n(x))| dx, \nonumber\\
			& \leq \varepsilon_n\|u_n^\perp\|+ C_1 + C_2 \varepsilon_n \|u_n\| + C'\|u_n^k\| \big(C_3+ C_4 \varepsilon_n \|u_n\| \big). \nonumber 
		\end{align}
		Thus, we can find a positive constant $C>0$ such that 
		\begin{align}
			\|u_n^\perp\|^2 \leq C\big( 1+\varepsilon_n\|u_n^\perp\| + \varepsilon_n \|u_n\| + \|u_n^k\|+\varepsilon_n \|u_n^k\| \|u_n\|\big). \label{3.13}
		\end{align}
		Adding \eqref{3.12} and \eqref{3.13}, we can find a constant $C>0$ such that
		\begin{align}
			\|u_n\|^2  \leq C( 1+ \|u_n\| + \varepsilon_n \|u_n\|+ \varepsilon_n \|u_n\|^2). \nonumber
		\end{align}
		In this case also the sequence $\{u_n\}$ is bounded in $\hr$. This completes the proof.
	\end{proof}
	\begin{rem}
		\label{rem3} The above result holds even if we consider the sequence $\{u_n\}$ in $\h$.
	\end{rem}
	Since every Palais-Smale sequence $\{u_n\}$ is bounded in $\hr$ and the space $\hr$ is reflexive, therefore, by the Banach-Alaoglu Theorem, $\{u_n\}$ is relatively compact in $\hr$, i.e. up to a subsequence, there exists $u_0 \in \hr$ such that $u_n \rightharpoonup u_0$ in $\hr$. We know that $\hr$ is compactly embedded in $L^p(\Omega)$ for $ 1 \leq p < \infty$, when $\Omega$ is bounded. This implies that $u_n \rightarrow u_0$ in $L^p(\Omega)$. Furthermore, this implies up to a subsequence, $u_n(x) \rightarrow u_0(x)$ a.e. in $\Omega$.
	
	We need to prove the Palais-Smale condition in $\hr$ to prove the existence result. For this, we establish the following lemmas, which are very important tools in proving the Palais-Smale condition.
	\begin{lem} \label{lem3.2}
		Assume \eqref{H1} and \eqref{d} holds. If $\{u_n\}$ is a Palais-Smale sequence in $\hr$, then up to a subsequence, we have
		\begin{align*}
			\lim_{n \rightarrow \infty} &\int\limits_{\Omega} \Bigg( \int\limits_{\Omega} \frac{Q(|y|)F(u_n(y))}{|x-y|^\mu}dy \Bigg) Q(|x|)F(u_n(x))dx\\
			&=
			\int\limits_{\Omega} \Bigg( \int\limits_{\Omega} \frac{Q(|y|)F(u_0(y))}{|x-y|^\mu}dy \Bigg) Q(|x|)F(u_0(x))dx. 
		\end{align*}  
	\end{lem}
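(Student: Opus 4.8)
The plan is to pass to the limit inside both the outer integral and the inner convolution using the weak convergence $u_n \rightharpoonup u_0$ in $\hr$, the a.e.\ convergence $u_n(x)\to u_0(x)$, and a uniform integrability (Vitali-type) argument for the sequence $\{Q^{4/(4-\mu)}(|x|)|F(u_n)|^{4/(4-\mu)}\}$ in $L^1(\Omega)$. First I would recall from Lemma \ref{lem3.1} (and Remark \ref{rem3}) that $\{u_n\}$ is bounded in $\hr$, so there is $M>0$ with $\|u_n\|\le M$ for all $n$. By Lemma \ref{lem3} with $p=(4-\mu)/2$ the sequence is bounded in $L^2_{Q^{4/(4-\mu)}}$-type spaces, and in fact, using the growth estimate \eqref{3.2} with a fixed $\alpha>\alpha_0$ chosen so small that $\frac{4\alpha v' M^2}{4-\mu}<4\pi\bigl(1+\frac{2b_0}{4-\mu}\bigr)$ (possible after rescaling $u_n/\|u_n\|$, exactly as in the ``well-definedness of $J$'' computation in Section \ref{sec3}), together with Hölder's inequality and Lemma \ref{lem2.2}, I get a uniform bound
\[
\int\limits_\Omega Q^{\frac{4}{4-\mu}}(|x|)\,|F(u_n(x))|^{\frac{4q_0}{4-\mu}}\,dx \le C
\]
for some $q_0>1$. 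This higher integrability is the key: it upgrades the a.e.\ convergence $F(u_n)\to F(u_0)$ (which follows from continuity of $F$ and $u_n\to u_0$ a.e.) to strong convergence $Q^{4/(4-\mu)}(|x|)|F(u_n)|^{4/(4-\mu)} \to Q^{4/(4-\mu)}(|x|)|F(u_0)|^{4/(4-\mu)}$ in $L^1(\Omega)$ via the Vitali convergence theorem (equi-integrability from the uniform $L^{q_0}$ bound plus no mass escaping to the boundary or to infinity, $\Omega$ being bounded). Equivalently $Q^{2/(4-\mu)}(|x|)F(u_n) \to Q^{2/(4-\mu)}(|x|)F(u_0)$ in $L^{4/(4-\mu)}(\Omega)$.

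Next I would feed this into the Hardy–Littlewood–Sobolev structure. Write the bilinear form
\[
B(v,w) = \int\limits_\Omega\!\int\limits_\Omega \frac{Q(|y|)v(y)\,Q(|x|)w(x)}{|x-y|^\mu}\,dx\,dy,
\]
and note that by Proposition \ref{Hardy} with $s=r=\frac{4}{4-\mu}$ one has $|B(v,w)|\le C_\mu \|Q^{4/(4-\mu)/?}\cdots\|$ — more precisely $B$ is a bounded bilinear form on $L^{4/(4-\mu)}_{Q^{4/(4-\mu)}}\times L^{4/(4-\mu)}_{Q^{4/(4-\mu)}}$ (this is exactly the estimate already displayed in Section \ref{sec3}). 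The quantity in the lemma is $B(F(u_n),F(u_n))$. Using bilinearity,
\[
B(F(u_n),F(u_n)) - B(F(u_0),F(u_0)) = B\bigl(F(u_n)-F(u_0),\,F(u_n)\bigr) + B\bigl(F(u_0),\,F(u_n)-F(u_0)\bigr),
\]
and each term is bounded in absolute value by $C_\mu$ times a product of the (bounded) norm of one factor and the norm of $F(u_n)-F(u_0)$ in the weighted $L^{4/(4-\mu)}$ space, which tends to $0$ by the strong convergence established above. Hence $B(F(u_n),F(u_n)) \to B(F(u_0),F(u_0))$, which is precisely the claim.

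The main obstacle is the equi-integrability step, i.e.\ producing the uniform $L^{q_0}$ bound (with $q_0>1$) on $Q^{4/(4-\mu)}(|x|)|F(u_n)|^{4/(4-\mu)}$. This is delicate because of the interplay between the critical exponential growth of $F$ and the singular weight $Q$: one must choose the exponent $\alpha$ in \eqref{3.2} and the Hölder exponent $v'$ in tandem so that $\frac{4\alpha v' q_0}{4-\mu}\|u_n\|^2$ stays below the admissible threshold $4\pi\bigl(1+\frac{2b_0}{4-\mu}\bigr)$ of the weighted Trudinger–Moser inequality of Lemma \ref{lem2.2}, uniformly in $n$. Since $\|u_n\|\le M$ is a fixed constant, this is possible by taking $\alpha$ close enough to $\alpha_0$ only if $M$ is controlled; in general one normalizes $v_n=u_n/\|u_n\|$ so that $\|v_n\|=1$ and absorbs the constant $\|u_n\|^2\le M^2$ into $\alpha$, at the cost of requiring $\alpha$ (hence the growth exponent) to be taken sufficiently small — which is permitted because \eqref{3.2} holds for every $\alpha>\alpha_0$. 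Once this uniform higher-integrability is in hand, the rest is the routine Vitali + bilinear-continuity argument sketched above. I would also remark that exactly the same computation with $F$ replaced by the pair $(f(u_n)u_n, F(u_n))$ or $(|f(u_n)|,F(u_n))$ handles the analogous convergence statements needed elsewhere in the Palais–Smale analysis.
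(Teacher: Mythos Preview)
Your argument has a genuine gap at precisely the point you flag as the ``main obstacle'': the uniform higher integrability
\[
\int\limits_\Omega Q^{\frac{4}{4-\mu}}(|x|)\,|F(u_n(x))|^{\frac{4q_0}{4-\mu}}\,dx \le C,\qquad q_0>1,
\]
cannot be obtained the way you describe. To apply Lemma~\ref{lem2.2} after normalizing $v_n=u_n/\|u_n\|$, you need the effective exponent $\alpha\|u_n\|^2$ (times the H\"older and $q_0$ factors) to stay below $4\pi\bigl(1+\frac{2b_0}{4-\mu}\bigr)$. But $\alpha$ must be taken strictly larger than $\alpha_0$ --- the estimate \eqref{3.2} fails for $\alpha<\alpha_0$ by the definition of critical exponential growth --- while the bound $M$ on $\|u_n\|$ coming from Lemma~\ref{lem3.1} carries no quantitative information; in particular there is no reason to have $\alpha_0 M^2<\pi(4-\mu+2b_0)$. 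Your suggested fix of ``absorbing $\|u_n\|^2$ into $\alpha$'' amounts to taking $\alpha$ below $\alpha_0$, which is not allowed. (The level estimate $c<\frac{(4-\mu)\pi}{2\alpha_0}\bigl(1+\frac{2b_0}{4-\mu}\bigr)$ does eventually give $\|u_n\|^2$ below the threshold, but only \emph{after} Lemma~\ref{lem3.2} has been used --- see \eqref{4.10} --- so invoking it here is circular.)

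The paper avoids the Trudinger--Moser threshold altogether by exploiting hypothesis \eqref{d} and the Palais--Smale structure. From $J'(u_n)u_n\to 0$ one has the uniform bound
\[
\sup_n\int\limits_\Omega\Bigl(\int\limits_\Omega\frac{Q(|y|)F(u_n(y))}{|x-y|^\mu}dy\Bigr)Q(|x|)f(u_n(x))u_n(x)\,dx\le C_0.
\]
On the set $\{|u_n|\ge M_\varepsilon\}$, assumption \eqref{d} gives $F(u_n)\le M_0 f(u_n)/|u_n|^v\le M_0 f(u_n)u_n/M_\varepsilon^{v+1}$, so the contribution there is $\le C_0 M_0/M_\varepsilon^{v+1}<\varepsilon$; the same trick is applied once more in the inner variable. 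On the remaining set $\{|u_n|\le M_\varepsilon\}$ (and $\{|u_n|\le K_\varepsilon\}$ inside), $F$ is sub-power ($|F(s)|\le C_{\overline{s}}|s|^{(4-\mu)/2}$ for bounded $s$), and one finishes by dominated convergence with an $L^1$-convergent majorant built from $|u_n|^{(4-\mu)/2}$ via HLS and the compact embedding. This truncation argument never requires $\|u_n\|^2$ to be below the Trudinger--Moser threshold, which is why the lemma holds for \emph{any} Palais--Smale sequence, not just one at a sub-threshold level.
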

	\begin{proof} Since $\{u_n\}$ is a Palais-Smale sequence in $\hr$, then from Lemma \ref{lem3.1}, it is bounded. This together with equation \eqref{3.5} implies that  
		there exists a constant $C_0>0$ such that
		\begin{align}
			\sup_n\int\limits_{\Omega} \Bigg( \int\limits_{\Omega} \frac{Q(|y|)F(u_n(y))}{|x-y|^\mu}dy \Bigg) Q(|x|)f(u_n(x))u_n(x)dx \leq C_0. \label{3.14}
		\end{align}
		Since we have shown that $J$ is well defined, one can have 
		\begin{equation}
			\label{3.15} \Bigg( \int\limits_{\Omega} \frac{Q(|y|)F(u(y))}{|x-y|^\mu}dy \Bigg) Q(|x|)F(u) \in L^1(\Omega), \text{ for all } u \in \hr.
		\end{equation}
		It follows that for a given $\varepsilon>0$, we can choose $M_\varepsilon> \max\left\{ \Big(\frac{C_0 M_0}{\varepsilon}\Big)^{\frac{1}{v+1}}, s_0\right\}$,  	where $M_0, s_0, v$ are defined in \eqref{d} and $C_0$ is given in \eqref{3.14}, satisfying
		\begin{equation}
			\label{3.16}
			\int\limits_{\Omega \cap \{ |u_0|\geq M_\varepsilon\}} \Bigg( \int\limits_{\Omega} \frac{Q(|y|)F(u_0(y))}{|x-y|^\mu}dy \Bigg) Q(|x|)F(u_0(x))dx \leq \varepsilon.
		\end{equation}
		From \eqref{d}, we have
		\begin{align}
			\int\limits_{\Omega \cap \{ |u_n|\geq M_\varepsilon\}} &\Bigg( \int\limits_{\Omega} \frac{Q(|y|)F(u_n(y))}{|x-y|^\mu}dy \Bigg) Q(|x|)F(u_n(x))dx \nonumber\\
			&\leq M_0 \int\limits_{\Omega \cap \{ |u_n|\geq M_\varepsilon\}} \Bigg( \int\limits_{\Omega} \frac{Q(|y|)F(u_n(y))}{|x-y|^\mu}dy \Bigg)  \frac{Q(|x|)|f(u_n(x))|}{|u_n|^v}dx\nonumber\\
			&\leq \frac{M_0}{M_\varepsilon^{v+1}} \int\limits_{\Omega \cap \{ |u_n|\geq M_\varepsilon\}} \Bigg( \int\limits_{\Omega} \frac{Q(|y|)F(u_n(y))}{|x-y|^\mu}dy \Bigg) Q(|x|)f(u_n(x))u_n(x)dx\nonumber
		\end{align}
		Using \eqref{3.14}
		\begin{align}
			\label{3.17}
			\int\limits_{\Omega \cap \{ |u_n|\geq M_\varepsilon\}} \Bigg( \int\limits_{\Omega} \frac{Q(|y|)F(u_n(y))}{|x-y|^\mu}dy \Bigg) Q(|x|)F(u_n(x))dx 
			\leq \frac{M_0 C_0}{M_\varepsilon^{v+1}} <\varepsilon.
		\end{align}
		Then from \eqref{3.16} and \eqref{3.17}, we obtain
		\begin{align}
			\Bigg| \int\limits_{\Omega} &\Bigg( \int\limits_{\Omega} \frac{Q(|y|)F(u_n(y))}{|x-y|^\mu}dy \Bigg) Q(|x|)F(u_n(x))dx -\int\limits_{\Omega} \Bigg( \int\limits_{\Omega} \frac{Q(|y|)F(u_0(y))}{|x-y|^\mu}dy \Bigg) Q(|x|)F(u_0(x))dx\Bigg| \nonumber\\
			& \leq 2\varepsilon +\Bigg| \int\limits_{\Omega \cap \{ |u_n| \leq M_\varepsilon\}} \bigg[\Bigg( \int\limits_{\Omega} \frac{Q(|y|)F(u_n(y))}{|x-y|^\mu}dy \Bigg) Q(|x|)F(u_n(x))\nonumber\\
			&  \quad- \Bigg( \int\limits_{\Omega} \frac{Q(|y|)F(u_0(y))}{|x-y|^\mu}dy \Bigg) Q(|x|)F(u_0(x))\bigg]dx \Bigg|. \nonumber
		\end{align}
		Thus, it is sufficient to show that
		\begin{align}&\displaystyle\int\limits_{\Omega \cap \{ |u_n| \leq M_\varepsilon\}} \Bigg( \int\limits_{\Omega} \frac{Q(|y|)F(u_n(y))}{|x-y|^\mu}dy \Bigg) Q(|x|)F(u_n(x))dx  \nonumber\\
			&\quad \to \int\limits_{\Omega \cap \{|u_0| \leq M_\varepsilon\}} \Bigg( \int\limits_{\Omega} \frac{Q(|y|)F(u_0(y))}{|x-y|^\mu}dy \Bigg) Q(|x|)F(u_0(x))dx \ \text{ as } \  \ n\to \infty. \nonumber
		\end{align}
		By Fubini's theorem, we have 
		\begin{align*}
			\int\limits_{\Omega \cap \{|u_0| \leq M_\varepsilon\}} \Bigg( \int\limits_{\Omega \cap \{|u_0|\geq K_\varepsilon\}} \frac{Q(|y|)F(u_0(y))}{|x-y|^\mu}dy \Bigg) Q(|x|)F(u_0(x))dx =\\
			\quad \int\limits_{\Omega \cap \{|u_0|\geq K_\varepsilon\}} \Bigg( \int\limits_{\Omega \cap \{|u_0| \leq M_\varepsilon\}} \frac{Q(|y|)F(u_0(y))}{|x-y|^\mu}dy \Bigg) Q(|x|)F(u_0(x))dx.
		\end{align*}
		Now, again using \eqref{3.15} and the above relation, for any $\varepsilon>0$, we choose $K_\varepsilon > \max\left\{ \Big(\frac{C_0 M_0}{\varepsilon}\Big)^{\frac{1}{v+1}}, s_0\right\}$ satisfying
		\begin{equation}
			\label{3.18}
			\int\limits_{\Omega \cap \{|u_0| \leq M_\varepsilon\}} \Bigg( \int\limits_{\Omega \cap \{|u_0|\geq K_\varepsilon\}} \frac{Q(|y|)F(u_0(y))}{|x-y|^\mu}dy \Bigg) Q(|x|)F(u_0(x))dx \leq \varepsilon.
		\end{equation} 
		Using \eqref{d} and Fubini's theorem, one can have
		\begin{align}
			\int\limits_{\Omega \cap \{|u_n| \leq M_\varepsilon\}}& \Bigg( \int\limits_{\Omega \cap \{|u_n|\geq K_\varepsilon\}} \frac{Q(|y|)F(u_n(y))}{|x-y|^\mu}dy \Bigg) Q(|x|)F(u_n(x))dx \nonumber\\
			&\quad\leq M_0 \int\limits_{\Omega \cap \{ |u_n|\leq M_\varepsilon\}} \Bigg( \int\limits_{\Omega \cap \{ |u_n|\geq K_\varepsilon\}} \frac{Q(|y|)|f(u_n(y))|}{|u_n|^v|x-y|^\mu}dy \Bigg)  Q(|x|)F(u_n(x))dx\nonumber\\
			&\quad \leq \frac{M_0}{K_\varepsilon^{v+1}} \int\limits_{\Omega \cap \{ |u_n|\leq M_\varepsilon\}} \Bigg( \int\limits_{\Omega \cap \{|u_n|\geq K_\varepsilon\}} \frac{Q(|y|)f(u_n(y))u_n(y)}{|x-y|^\mu}dy \Bigg) Q(|x|)F(u_n(x))dx\nonumber\\
			&\quad = \frac{M_0}{K_\varepsilon^{v+1}} \int\limits_{\Omega \cap \{ |u_n|\geq K_\varepsilon\}} \Bigg( \int\limits_{\Omega \cap \{|u_n|\leq M_\varepsilon\}} \frac{Q(|y|)F(u_n(y))}{|x-y|^\mu}dy \Bigg) Q(|x|)f(u_n(x)) u_n(x)dx.\nonumber
		\end{align}
		From \eqref{3.14}, we have
		\begin{align}
			\label{3.19}
			\int\limits_{\Omega \cap \{|u_n| \leq M_\varepsilon\}} \Bigg( \int\limits_{\Omega \cap \{|u_n|\geq K_\varepsilon\}} \frac{Q(|y|)F(u_n(y))}{|x-y|^\mu}dy \Bigg) Q(|x|)F(u_n(x))dx
			\leq \frac{K_0 C_0}{M_\varepsilon^{v+1}} <\varepsilon.
		\end{align}
		Combining \eqref{3.18} and \eqref{3.19}, we have
		\begin{align*}
			&\Bigg|\int\limits_{\Omega \cap \{ |u_n| \leq M_\varepsilon\}} \Bigg( \int\limits_{\Omega} \frac{Q(|y|)F(u_n(y))}{|x-y|^\mu}dy \Bigg) Q(|x|)F(u_n(x))dx  - \\
			& \quad \int\limits_{\Omega \cap \{|u_0| \leq M_\varepsilon\}} \Bigg( \int\limits_{\Omega} \frac{Q(|y|)F(u_0(y))}{|x-y|^\mu}dy \Bigg) Q(|x|)F(u_0(x))dx\Bigg|\\
			&\leq 2\varepsilon + \Bigg|\int\limits_{\Omega \cap \{ |u_n| \leq M_\varepsilon\}} \Bigg( \int\limits_{\Omega \cap \{ |u_n| \leq K_\varepsilon\}} \frac{Q(|y|)F(u_n(y))}{|x-y|^\mu}dy \Bigg) Q(|x|)F(u_n(x))dx\\
			& \quad  - \int\limits_{\Omega \cap \{|u_0| \leq M_\varepsilon\}} \Bigg( \int\limits_{\Omega \cap \{|u_n| \leq K_\varepsilon\}} \frac{Q(|y|)F(u_0(y))}{|x-y|^\mu}dy \Bigg) Q(|x|)F(u_0(x))dx\Bigg|.
		\end{align*}
		It remains to show that 
		\begin{align*}
			&\int\limits_{\Omega \cap \{ |u_n| \leq M_\varepsilon\}} \Bigg( \int\limits_{\Omega \cap \{ |u_n| \leq K_\varepsilon\}} \frac{Q(|y|)F(u_n(y))}{|x-y|^\mu}dy \Bigg) Q(|x|)F(u_n(x))dx \\ &\quad \to \int\limits_{\Omega \cap \{|u_0| \leq M_\varepsilon\}} \Bigg( \int\limits_{\Omega \cap \{|u_n| \leq K_\varepsilon\}} \frac{Q(|y|)F(u_0(y))}{|x-y|^\mu}dy \Bigg) Q(|x|)F(u_0(x))dx \ \text{ as } \ n \to \infty.
		\end{align*}
		
		Let $\epsilon =1$ and $q=1$ in \eqref{3.1}, then there exists a constant $\overline{s}>0$, for all $|s| \leq \overline{s}$ such that
		\begin{align*}
			|F(s)| &\leq |s|^{(4-\mu)/2} + C|s| e^{\alpha s^2} \nonumber \\
			& \leq |s|^{(4-\mu)/2} + C|s| \sum_{j=0}^\infty \frac{\alpha^js^{2j}}{j!} \nonumber\\
			& \leq |s|^{(4-\mu)/2} \Bigg(1 + C |s|\sum_{j=1}^\infty \frac{\alpha^j|s|^{2j-(4-\mu)/2}}{j!} \Bigg) + C|s|\nonumber
		\end{align*}
		Hence, for all $|s| \leq \overline{s},$ we have
		\begin{align}
			\label{f}
			|F(s)| \leq C_{\overline{s}} |s|^{(4-\mu)/2}.
		\end{align}
		Since $\{u_n\}$ is a Palais-Smale sequence in $\hr$, Lemma \ref{lem3.1} implies that it is bounded. Using this with Remark \ref{rem3}, we have $u_n(x) \to u_0(x)$ a. e. in $\Omega$. The following convergence holds by Brezis-Lieb lemma for the nonlocal term \cite{Moroz} together with Proposition \ref{Hardy}
		\begin{align*}
			&\Bigg( \int\limits_{\Omega} \frac{Q(|y|)|u_n(y)|^{\frac{4-\mu}{2}}}{|x-y|^\mu}dy \Bigg) Q(|x|)|u_n(x)|^{\frac{4-\mu}{2}} \chi_{\Omega} \to \nonumber\\
			&\quad \Bigg( \int\limits_{\Omega} \frac{Q(|y|)|u_0(y)|^{\frac{4-\mu}{2}}}{|x-y|^\mu}dy \Bigg) Q(|x|)|u_0(x)|^{\frac{4-\mu}{2}} \chi_{\Omega} \ \text{ in } L^1(\Omega).
		\end{align*}
		It is easy to show that up to a subsequence, the following holds 
		\begin{align*}
			&\Bigg( \int\limits_{\Omega \cap \{ |u_n| \leq K_\varepsilon\}} \frac{Q(|y|)F(u_n(y))}{|x-y|^\mu}dy \Bigg) Q(|x|)F(u_n(x)) \chi_{\Omega \cap \{ |u_n| \leq M_\varepsilon\}} \to \nonumber \\
			& \quad \Bigg( \int\limits_{\Omega \cap \{|u_0| \leq K_\varepsilon\}} \frac{Q(|y|)F(u_0(y))}{|x-y|^\mu}dy \Bigg) Q(|x|)F(u_0(x)) \chi_{\Omega \cap \{|u_0| \leq M_\varepsilon\}} \ \text{ a.e. in } \Omega. \label{s}
		\end{align*}
		
		From \eqref{f}, there exist constants $C_{M_\varepsilon}, C_{K_\varepsilon}>0$ depending on $M_\varepsilon$ and $K_\varepsilon$ respectively, such that
		\begin{align*}
			& \Bigg( \int\limits_{\Omega \cap \{ |u_n| \leq K_\varepsilon\}} \frac{Q(|y|)F(u_n(y))}{|x-y|^\mu}dy \Bigg) Q(|x|)F(u_n(x)) \chi_{\Omega \cap \{ |u_n| \leq M_\varepsilon\}}\\
			& \quad \leq C_{M_\varepsilon} C_{K_\varepsilon} \Bigg( \int\limits_{\Omega } \frac{Q(|y|) |u_n(y)|^{\frac{4-\mu}{2}}}{|x-y|^\mu}\Bigg) Q(|x|) |u_n(x)|^{\frac{4-\mu}{2}} \\
			& \quad \to C_{M_\varepsilon} C_{K_\varepsilon} \Bigg( \int\limits_{\Omega } \frac{Q(|y|) |u_0(y)|^{\frac{4-\mu}{2}}}{|x-y|^\mu}\Bigg) Q(|x|) |u_0(x)|^{\frac{4-\mu}{2}} \text{ in } L^1(\Omega),
		\end{align*}
		where we have used Proposition \ref{Hardy}. Therefore, using generalized Lebesgue's dominated convergence theorem, the proof is completed.

	\end{proof} 
	\begin{rem}
		\label{rem3.1}
		Lemma \ref{lem3.2} holds even if we take $\{u_n\}$ to be a sequence converges weakly to $u_0$ in $\hr$ and satisfies \eqref{3.14}.
	\end{rem}
	\begin{lem}
		\label{lemma3.4}
		Assume \eqref{H1} and \eqref{d} holds. If $\{u_n\}$ is a Palais-Smale sequence for $J$ in $\hr$. Then, for all $\phi \in C^\infty_{0,rad}(\Omega)$, up to a subsequence, we can conclude that
		\begin{align*}
			&\lim_{n \rightarrow \infty} \int\limits_{\Omega} \Bigg( \int\limits_{\Omega} \frac{Q(|y|)F(u_n(y))}{|x-y|^\mu}dy \Bigg) Q(|x|)f(u_n(x)) \phi(x) dx \\
			& \quad= \int\limits_{\Omega} \Bigg( \int\limits_{\Omega} \frac{Q(|y|) F(u_0(y))}{|x-y|^\mu}dy \Bigg) Q(|x|)f(u_0(x)) \phi(x) dx. 
		\end{align*}  
	\end{lem}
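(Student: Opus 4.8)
The plan is to imitate the proof of Lemma \ref{lem3.2}, splitting both the ``outer'' variable $x$ and the ``inner'' (convolution) variable $y$ according to whether $|u_n|$ is large or bounded there. On the regions where $|u_n|$ is large, the Ambrosetti--Rabinowitz condition \eqref{AR}, the growth condition \eqref{d}, and the uniform bound
\[
\sup_n\int_\Omega\Bigg(\int_\Omega\frac{Q(|y|)F(u_n(y))}{|x-y|^\mu}dy\Bigg)Q(|x|)f(u_n(x))u_n(x)\,dx\le C_0
\]
(which holds by Lemma \ref{lem3.1} together with \eqref{3.5}, cf.\ \eqref{3.14}) will make the contribution arbitrarily small; on the remaining region, where $u_n$ stays bounded, one passes to the limit by a generalized dominated convergence argument resting on Proposition \ref{Hardy}, Lemma \ref{lem3}, and the Brezis--Lieb lemma for the nonlocal term, exactly as in Lemma \ref{lem3.2}. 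I would first record that, by Lemma \ref{lem3.1}, $\{u_n\}$ is bounded in $\hr$, so along a subsequence $u_n\rightharpoonup u_0$ in $\hr$, $u_n\to u_0$ in every $L^p(\Omega)$ and a.e.\ in $\Omega$, and the displayed bound holds. Fix $\phi\in C^\infty_{0,rad}(\Omega)$ and set $g_n(x):=\int_\Omega\frac{Q(|y|)F(u_n(y))}{|x-y|^\mu}dy\ge 0$, similarly $g_0$; for $\varepsilon>0$ I split the integral in three pieces.

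\emph{Step 1 (outer tail $\{|u_n(x)|\ge M_\varepsilon\}$).} For $M_\varepsilon\ge 1$ one has $|f(u_n(x))\phi(x)|\le \frac{\|\phi\|_\infty}{M_\varepsilon}f(u_n(x))u_n(x)$ on this set, so by the displayed bound its contribution is $\le \|\phi\|_\infty C_0/M_\varepsilon<\varepsilon$ for $M_\varepsilon$ large; the corresponding estimate with $u_0$ follows because $\int_\Omega g_0(x)Q(|x|)f(u_0(x))u_0(x)\,dx<\infty$, proved exactly like the well-definedness of $J$ in Section \ref{sec3} since $f(u_0)u_0$ obeys the same pointwise bound \eqref{3.1} as $F(u_0)$.

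\emph{Step 2 (inner tail).} On $\{|u_n(x)|\le M_\varepsilon\}$ I split $g_n(x)$ into its parts over $\{|u_n(y)|\ge K_\varepsilon\}$ and over $\{|u_n(y)|\le K_\varepsilon\}$, with $K_\varepsilon\ge\max\{s_0,1\}$. For the first part, \eqref{d} gives $F(u_n(y))\le \frac{M_0}{K_\varepsilon^{v+1}}f(u_n(y))u_n(y)$; after Fubini I compare the resulting inner $x$-integral with $g_n$, splitting $\{|u_n(x)|\le M_\varepsilon\}$ into $\{|u_n(x)|\ge 1\}$ (where $|f(u_n(x))\phi(x)|\le C_{M_\varepsilon}\|\phi\|_\infty F(u_n(x))$, using continuity of $f$ and that $F$ is nondecreasing and positive away from the origin, the exceptional set being $f$-null by \eqref{H1}) and $\{|u_n(x)|<1\}$ (where $|f(u_n(x))|\le C|u_n(x)|^{\frac{2-\mu}{2}}$ is small and $\phi$ is bounded with compact support); this bounds the part by a fixed constant times $K_\varepsilon^{-(v+1)}$ times the displayed quantity, hence $<\varepsilon$ for $K_\varepsilon$ large, uniformly in $n$, and likewise for $u_0$.

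\emph{Step 3 (bounded region).} It remains to show that
\[
\int_{\{|u_n(x)|\le M_\varepsilon\}}\Bigg(\int_{\{|u_n(y)|\le K_\varepsilon\}}\frac{Q(|y|)F(u_n(y))}{|x-y|^\mu}dy\Bigg)Q(|x|)f(u_n(x))\phi(x)\,dx
\]
converges to the same expression with $u_0$. Here $|F(u_n(y))|\le C_{K_\varepsilon}|u_n(y)|^{\frac{4-\mu}{2}}$ and, by \eqref{H1}, $|f(u_n(x))\phi(x)|\le C_{M_\varepsilon}\|\phi\|_\infty(1+|u_n(x)|^{\frac{4-\mu}{2}})\chi_{\supp\phi}(x)$, so the integrand is dominated by a fixed constant times $\big(\int_\Omega\frac{Q(|y|)|u_n(y)|^{\frac{4-\mu}{2}}}{|x-y|^\mu}dy\big)Q(|x|)\big(|u_n(x)|^{\frac{4-\mu}{2}}+\chi_{\supp\phi}(x)\big)$, which by Proposition \ref{Hardy}, Lemma \ref{lem3} (giving $u_n\to u_0$ in the weighted space with exponent $\frac{4-\mu}{2}$) and the Brezis--Lieb lemma for the nonlocal term converges in $L^1(\Omega)$ to its $u_0$-analogue; since the integrand itself converges a.e.\ to its $u_0$-analogue (by a.e.\ convergence of $u_n$, continuity of $F,f$, and dominated convergence in the $y$-integral), the generalized Lebesgue dominated convergence theorem applies, and adding the three steps finishes the proof.

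\emph{Main obstacle.} The delicate point is Step 2: whereas in Lemma \ref{lem3.2} the outer factor is $F(u_n)$, which pairs directly with the coupled bound \eqref{3.14}, here the outer factor is $f(u_n)\phi$, which does not, so one must further subdivide the outer region by the size of $u_n(x)$ and, on the intermediate range, trade $|f(u_n(x))|$ for a multiple of $F(u_n(x))$ via continuity of $f$ and strict positivity of $F$ away from $0$ before invoking \eqref{3.14}. If $0\in\supp\phi$ one may first reduce, by density of $C^\infty_{0,rad}(\Omega\setminus\{0\})$ in $\hr$, to $\phi$ supported away from the origin, so that the singularity of $Q$ plays no role.
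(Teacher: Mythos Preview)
Your route is genuinely different from the paper's. The paper does \emph{not} repeat the truncation machinery of Lemma~\ref{lem3.2}; instead it sets $\zeta_n(x):=g_n(x)Q(|x|)f(u_n(x))+\lambda u_n(x)$, shows $\{\zeta_n\}$ is bounded in $L^1(\Omega)$ via \eqref{3.11}, extracts a weak* Radon-measure limit, and then uses the Palais--Smale relation $J'(u_n)\phi\to 0$ to identify that measure with $\nabla u_0\cdot\nabla(\cdot)$ and hence (by Radon--Nikodym) with an $L^1$ density, which is forced by a.e.\ convergence to be $g_0Qf(u_0)+\lambda u_0$. That argument is shorter and really uses $J'(u_n)\to0$; your approach is more elementary and, if it works, actually proves the conclusion for any bounded sequence satisfying \eqref{3.14}, not only Palais--Smale sequences.

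There is, however, a real gap in your Step~2 on the sub-region $\{|u_n(x)|<1\}$. The comparison $|f(u_n(x))\phi(x)|\le C\,F(u_n(x))$ that you use on $\{1\le|u_n(x)|\le M_\varepsilon\}$ fails for small $u_n(x)$ (typically $f(s)/F(s)\sim s^{-1}$ as $s\to0^+$), so you cannot dominate the inner $x$-integral by $g_n(y)$ and invoke \eqref{3.14} there; and the remark that $|f(u_n)|\le C|u_n|^{(2-\mu)/2}$ is ``small'' gives only boundedness, no decay in $K_\varepsilon$. A clean repair is the Cauchy--Schwarz inequality for the Riesz kernel (exactly as the paper uses to prove \eqref{3.11}): writing that piece as $\int\!\!\int|x-y|^{-\mu}a(y)b(x)\,dy\,dx$ with $a=QF(u_n)\chi_{\{|u_n|\ge K_\varepsilon\}}$ and $b=Q|f(u_n)\phi|\chi_{\{|u_n|<1\}}$, one has
\[
\Big(\textstyle\int\!\!\int|x-y|^{-\mu}aa\Big)^{1/2}\le\Big(\tfrac{M_0}{K_\varepsilon^{v+1}}\textstyle\int_\Omega g_nQf(u_n)u_n\Big)^{1/2}\le\Big(\tfrac{M_0C_0}{K_\varepsilon^{v+1}}\Big)^{1/2}
\]
by \eqref{d} and \eqref{3.14}, while the $b$-factor is uniformly bounded by Proposition~\ref{Hardy} and \eqref{Q} (since $Q\in L^{4/(4-\mu)}_{\mathrm{loc}}$). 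This yields a $K_\varepsilon^{-(v+1)/2}$ bound, which suffices. With this patch, your Steps~1--3 go through (in Step~3 you should also note, as in Lemma~\ref{lem3.2}, that one may choose $M_\varepsilon,K_\varepsilon$ with $|\{|u_0|=M_\varepsilon\}|=|\{|u_0|=K_\varepsilon\}|=0$ so that the truncation indicators converge a.e.).
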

	\begin{proof}
		Since $u_n$ is a Palais-Smale sequence in $\hr$ and from Lemma \ref{lem3.1}, it is bounded. Then, there exists a $u_0\in \hr$ such that up to a subsequence $u_n\rightharpoonup u_0$ in $\hr$, $u_n \to u_0$ in $L^p(\Omega)$ with $1\leq p <\infty$ and $u_n(x)\to u_0(x)$ a.e. in $\Omega$. Without loss of generality, we can assume that $u_n \geq 0$, as $J(|u|) \leq J(u)$ for all $u\in \hr$. 
		For each $n\in \mathbb{N}$, we define a non-negative sequence $\zeta_n$ as
		\begin{equation*}
			\zeta_n(x): = \Bigg( \int\limits_\Omega \frac{Q(|y|)F(u_n(y))}{|x-y|^\mu}dy\Bigg) Q(|x|)f(u_n(x)) \ + \ \lambda u_n(x).
		\end{equation*}
		Now, we estimate the integral  
		\begin{align}
			\int\limits_{\Omega} \zeta_n dx 
			& \leq \int\limits_{\Omega} \Bigg( \int\limits_\Omega \frac{Q(|y|)F(u_n(y))}{|x-y|^\mu}dy\Bigg) Q(|x|)f(u_n(x)) dx \ + \ \lambda\int\limits_{\Omega} u_n(x) \ dx.\nonumber
		\end{align}
		Using H\"older's inequality and \eqref{3.11}, we get a constant $C>0$ satisfying
		\begin{align}
			\int\limits_{\Omega} \zeta_n dx \leq
			C + C \varepsilon_n \|u_n\| \ + \lambda |\Omega|^{\1} \Big( \int\limits_{\Omega} |u_n(x)|^2 \ dx\Big)^{\1}.\nonumber
		\end{align}
		By the embedding $\hr \hookrightarrow L^p(\Omega)$ with $1\leq p < \infty$, for bounded domain $\Omega$, we have $\|u\|_2 \leq C_1 \|u\|$ for all $u\in \hr$. This together with the fact that $\{u_n\}$ is bounded in $\hr$, we obtain a constant $C'>0$, independent of $n$ satisfying 
		\begin{align}
			\int\limits_{\Omega} \zeta_n(x) \ dx \leq C+ \lambda |\Omega|^{\1} \|u_n\| < C'. \nonumber
		\end{align}
		This implies that the sequence $\{\zeta_n\}$ is bounded in $L^1_{loc}(\Omega)$, then up to a subsequence, there exists a Radon measure $\mu$ such that $\zeta_n \rightharpoonup \mu$ in the weak* topology, i.e. 
		\begin{align*}
			\lim_{n\to \infty} \int\limits_\Omega \left[\Bigg( \int\limits_\Omega \frac{Q(|y|)F(u_n(y))}{|x-y|^\mu}dy\Bigg) Q(|x|)f(u_n(x)) \phi + \lambda u_n \phi \right] \ dx= \int\limits_\Omega \phi \ d\mu, \ \ \forall \ \phi \in C_{0,rad}^\infty(\Omega).
		\end{align*}
		Since $\{u_n\}$ is a Palais-Smale sequence, so it satisfies \eqref{3.5}, then we have
		\begin{align*}
			\lim_{n\to \infty} \int\limits_\Omega \nabla u_n \cdot \nabla \phi \ dx  =\int\limits_\Omega \phi \ d\mu.
		\end{align*}
		As $u_n \rightharpoonup u_0$ in $\hr$, this implies that
		$\displaystyle\int\limits_\Omega \nabla u_0 \cdot \nabla \phi \ dx =\int\limits_\Omega \phi \ d\mu.$
		
		Therefore, the Radon measure $\mu$ is absolutely continuous with respect to the Lebesgue measure. Hence, the Radon-Nikodym theorem implies that there exists a function $\zeta \in L^1_{loc}(\Omega)$ such that, for any $\phi \in C_{0,rad}^\infty(\Omega)$
		\begin{align*}
			\int\limits_\Omega \phi \ d\mu = \int\limits_\Omega \phi \zeta \ dx.
		\end{align*}
		Then, we conclude using the above relations that
		\begin{align*}
			&\lim_{n\to \infty} \int\limits_\Omega \left[ \Bigg( \int\limits_\Omega \frac{Q(|y|)F(u_n(y))}{|x-y|^\mu}dy\Bigg) Q(|x|)f(u_n(x)) \phi + \lambda u_n \phi \right]\ dx \\
			&= \int\limits_\Omega \phi \zeta \ dx
			= \int\limits_\Omega \left[\Bigg( \int\limits_\Omega \frac{Q(|y|)F(u_0(y))}{|x-y|^\mu}dy\Bigg) Q(|x|)f(u_0(x)) \phi+ \lambda u_0 \phi\right] \ dx.
		\end{align*}
		Since $u_n \to u_0$ a.e. in $\Omega$, so $\zeta$ can be identified as $\displaystyle\Bigg( \int\limits_\Omega \frac{Q(|y|)F(u_0(y))}{|x-y|^\mu}dy\Bigg)Q(|x|)f(u_0(x)) + \lambda u_0(x)$.
		This implies that
		\begin{align*}
			&\lim_{n\to \infty} \int\limits_\Omega \Bigg( \int\limits_\Omega \frac{Q(|y|)F(u_n(y))}{|x-y|^\mu}dy\Bigg) Q(|x|)f(u_n(x)) \phi\ dx \\
			& \quad=\int\limits_\Omega \Bigg( \int\limits_\Omega \frac{Q(|y|)F(u_0(y))}{|x-y|^\mu}dy\Bigg) Q(|x|)f(u_0(x)) \phi \ dx.
		\end{align*}
		This completes the proof.
	\end{proof}
	
	\section{Mountain-pass case when $0<\lambda<\lambda_1$}
	\label{sec4}
	To prove the existence of a nontrivial solution of equation \eqref{1.1} when $0< \lambda< \lambda_1$, we use the mountain-pass theorem due to A. Ambrosetti and P. Rabinowitz \cite{Ambrosetti}.
	\begin{thm}
		\label{thm4.1}
		Let $J : \mathcal{H} \rightarrow \R$ be a $C^1$ functional on a Banach space $(\mathcal{H}, \|\cdot\|)$ satisfying $J(0)=0.$ Assume that there exist positive numbers $\rho$ and $\delta$ such that
		\begin{enumerate}
			\item[(i)] $J(u)\geq \delta$ for all $u\in \mathcal{H}$ satisfying $\|u\| = \rho$.
			\item[(ii)] There exists $v\in \mathcal{H}$ such that $J(v)<\delta$ for some $v\in \mathcal{H}$ with $\|v\|\geq \rho$.
			\item[(iii)] There exists some $\beta >0$ such that $J$ satisfies the Palais-Smale condition, for all $c\in(0, \beta)$.
		\end{enumerate}
		Consider $\displaystyle\Gamma=\{\gamma\in C([0, 1], \mathcal{H}) : \gamma(0) = 0 \text{ and } \gamma(1) = v\}$ and set $\displaystyle c = \inf_{\gamma \in \Gamma} \max_{t\in [0,1]}J(\gamma(t)) \geq \delta$. Then $c \in (0, \beta)$ is a critical value of the functional $J$.
	\end{thm}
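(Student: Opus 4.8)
The plan is to prove Theorem \ref{thm4.1} by the classical deformation argument of Ambrosetti--Rabinowitz. First I would record the elementary facts: $\Gamma \neq \emptyset$, since the segment $\gamma_0(t) = tv$ lies in it, and $c$ is finite, since $c \le \max_{t\in[0,1]} J(tv) < \infty$ by continuity of $t \mapsto J(tv)$ on the compact interval $[0,1]$. \textbf{Step 1 (the minimax level is strictly positive).} For each $\gamma \in \Gamma$ the function $t \mapsto \|\gamma(t)\|$ is continuous, vanishes at $t=0$, and is $\ge \rho$ at $t=1$; by the intermediate value theorem there is $t_\gamma \in (0,1]$ with $\|\gamma(t_\gamma)\| = \rho$, so hypothesis (i) gives $\max_{t\in[0,1]} J(\gamma(t)) \ge J(\gamma(t_\gamma)) \ge \delta$. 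Taking the infimum over $\Gamma$ yields $c \ge \delta > 0$. Together with (ii) this also shows $J(v) < \delta \le c$ and $J(0) = 0 < c$, i.e. both endpoints of every admissible path sit strictly below the level $c$ — this is what will allow a deformation to fix the endpoints. (To invoke (iii) one needs $c < \beta$; in the abstract statement this is taken as given, while in the application of Section~\ref{sec4} it is verified separately via estimates on Moser-type functions using \eqref{d}--\eqref{H4}.)

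\textbf{Step 2 (no critical point at level $c$ forces a strict deformation).} Suppose, for contradiction, that $c$ is not a critical value, i.e. $K_c := \{u : J(u)=c,\ J'(u)=0\} = \emptyset$. Using the Palais--Smale condition at the level $c \in (0,\beta)$ together with $K_c = \emptyset$, one shows there are $\sigma > 0$ and $\varepsilon_0 \in \bigl(0, \tfrac12\min\{c,\ c - J(v)\}\bigr)$ with $\|J'(u)\| \ge \sigma$ whenever $|J(u) - c| \le 2\varepsilon_0$; otherwise a sequence with $J(u_n) \to c$, $J'(u_n) \to 0$ would, by (iii), admit a subsequence converging to a point of $K_c$. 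Since $J$ is only $C^1$ on the Banach space $\mathcal{H}$, I would build a locally Lipschitz pseudo-gradient vector field for $J$ on the strip $\{|J - c| < 2\varepsilon_0\}$ (Palais), cut it off outside the strip, and integrate the resulting flow to obtain $\varepsilon \in (0,\varepsilon_0)$ and a continuous $\eta : \mathcal{H} \to \mathcal{H}$ such that: (a) $\eta(u) = u$ whenever $J(u) \notin (c-2\varepsilon, c+2\varepsilon)$; and (b) $J(\eta(u)) \le c - \varepsilon$ whenever $J(u) \le c + \varepsilon$.

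\textbf{Step 3 (contradiction).} By definition of $c$, choose $\gamma \in \Gamma$ with $\max_{t} J(\gamma(t)) \le c + \varepsilon$. Since $J(0)=0$ and $J(v) < c - 2\varepsilon$ by the choice of $\varepsilon_0$, property (a) gives $\eta(\gamma(0)) = 0$ and $\eta(\gamma(1)) = v$, so $\eta \circ \gamma \in \Gamma$, while (b) gives $J(\eta(\gamma(t))) \le c - \varepsilon$ for all $t \in [0,1]$. Hence $\max_{t} J\bigl((\eta\circ\gamma)(t)\bigr) \le c - \varepsilon < c$, contradicting $c = \inf_{\Gamma} \max_{[0,1]} J$. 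Therefore $K_c \neq \emptyset$, and $c$ is a critical value of $J$ lying in $(0,\beta)$.

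I expect the genuinely nontrivial ingredient to be Step 2: producing the quantitative deformation in the $C^1$ Banach-space setting, which requires the pseudo-gradient construction away from $K_c$ and a careful choice of $\varepsilon$ so that the deformation pushes the whole sublevel set $\{J \le c+\varepsilon\}$ below $c$ while fixing all points outside a thin strip around $c$. The remaining steps — positivity of $c$, the intermediate-value crossing, and the final contradiction — are soft. One should also keep in mind that the hypothesis $c < \beta$, needed to use (iii), is an input rather than a consequence of (i)--(ii), and in the paper it is secured separately by the minimax-level estimate.
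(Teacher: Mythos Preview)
Your proposal is a correct sketch of the classical Ambrosetti--Rabinowitz deformation argument, but you should be aware that the paper does not actually prove Theorem~\ref{thm4.1}: it is stated as a known tool, attributed to \cite{Ambrosetti}, and used without proof. So there is nothing to compare against beyond noting that your outline is the standard one and would be accepted as a proof of this cited result.

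One small remark on your write-up: you correctly flag that the inequality $c<\beta$ is an \emph{additional input} not implied by (i)--(iii) as stated, and that the paper secures it separately (Proposition~\ref{prop5}). Strictly speaking the theorem as written asserts ``$c\in(0,\beta)$ is a critical value'' as a conclusion, which conflates the hypothesis $c<\beta$ with the conclusion; your reading --- that $c<\beta$ must be assumed (or verified in the application) before (iii) can be invoked --- is the correct one, and your proof handles this honestly.
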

	
	In the following propositions, we show that $J$ satisfies the geometry (i) and (ii).
	\begin{prop}
		\label{prop4.2}
		Assume \eqref{H1}. Then there exists $\delta, \rho>0$ such that $J(u)\geq \delta$, for $u\in \hr$ satisfying $\|u\|= \rho$.
	\end{prop}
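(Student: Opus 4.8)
The plan is to split $J$ into its quadratic part $\tfrac12\|u\|^2-\tfrac{\lambda}{2}\|u\|_2^2$ and the nonlocal term, and to show that near the origin the quadratic part dominates. First I would use that $0<\lambda<\lambda_1$ together with the variational characterization \eqref{1.6} (valid on $\hr\subset\h$) to obtain $\|u\|_2^2\le\lambda_1^{-1}\|u\|^2$, so that $\tfrac12\|u\|^2-\tfrac{\lambda}{2}\|u\|_2^2\ge\tfrac12\big(1-\tfrac{\lambda}{\lambda_1}\big)\|u\|^2$ with a strictly positive coefficient. The whole argument then reduces to showing that the nonlocal term is $o(\|u\|^2)$ as $\|u\|\to0$; in fact I expect a bound of order $\|u\|^{4-\mu}$, and since $\mu<2$ we have $4-\mu>2$, which is exactly what is needed.

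For the nonlocal term I would repeat, with a little more care about uniformity, the estimate already carried out in Section \ref{sec3} to prove that $J$ is well defined: apply Proposition \ref{Hardy} with $s=r=\tfrac{4}{4-\mu}$ and then \eqref{3.2} with a fixed $\alpha>\alpha_0$ and an exponent $q$, bounding the term by $C_\mu\big(\int_\Omega Q^{\frac{4}{4-\mu}}(|x|)|F(u)|^{\frac{4}{4-\mu}}dx\big)^{\frac{4-\mu}{2}}$ and then splitting $|F(u)|^{\frac{4}{4-\mu}}\le C\big(|u|^2+|u|^{\frac{4q}{4-\mu}}e^{\frac{4\alpha}{4-\mu}u^2}\big)$. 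Lemma \ref{lem3} with $p=\tfrac{4-\mu}{2}$ controls the $|u|^2$-integral by $C\|u\|^2$; for the remaining one, Hölder's inequality with conjugate exponents $v,v'$ and Lemma \ref{lem3} with $p=qv$ (bounding the $|u|$-factor by $C\|u\|^{\frac{4q}{4-\mu}}$) reduce matters to a bound for $\big(\int_\Omega Q^{\frac{4}{4-\mu}}(|x|)e^{\frac{4\alpha v'}{4-\mu}u^2}dx\big)^{1/v'}$, and writing $e^{\frac{4\alpha v'}{4-\mu}u^2}=e^{\frac{4\alpha v'}{4-\mu}\|u\|^2 (u/\|u\|)^2}$ turns this into a question about the weighted Trudinger--Moser functional evaluated on $w=u/\|u\|$, which has $\|w\|\le1$.

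The delicate point, and the step I expect to be the main obstacle, is to keep this last integral uniformly bounded: I would choose $q\ge 2$ (so that $\tfrac{4q}{4-\mu}\ge 2$ and $qv\ge\tfrac{4-\mu}{4}$), fix $v>1$, and then choose the radius $\rho$ small enough that $\tfrac{4\alpha v'}{4-\mu}\rho^2\le 4\pi\big(1+\tfrac{2b_0}{4-\mu}\big)$; under this restriction Lemma \ref{lem2.2} yields a finite bound independent of $u$ with $\|u\|\le\rho$. Collecting everything gives $\int_\Omega Q^{\frac{4}{4-\mu}}(|x|)|F(u)|^{\frac{4}{4-\mu}}dx\le C\big(\|u\|^2+\|u\|^{\frac{4q}{4-\mu}}\big)\le C\|u\|^2$ for $\|u\|\le\rho$, hence the nonlocal term is bounded in modulus by $C\|u\|^{4-\mu}$. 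Then for $\|u\|=\rho$, shrinking $\rho$ further if necessary, $J(u)\ge\rho^2\big(\tfrac12(1-\tfrac{\lambda}{\lambda_1})-C\rho^{2-\mu}\big)$, and since $2-\mu>0$ the bracket is positive; taking $\delta$ equal to this positive number completes the argument. Everything other than the calibration of $\rho$ against $\alpha$ and $v'$ in the exponential estimate is a routine use of Hardy--Littlewood--Sobolev, Lemma \ref{lem3} and Lemma \ref{lem2.2}.
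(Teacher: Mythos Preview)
Your proposal is correct and follows essentially the same route as the paper: Hardy--Littlewood--Sobolev plus \eqref{3.2} to control the nonlocal term, Lemma~\ref{lem3} for the polynomial weights, and Lemma~\ref{lem2.2} applied to $u/\|u\|$ after choosing $\rho$ small enough that $\tfrac{4\alpha v'}{4-\mu}\rho^2\le 4\pi\big(1+\tfrac{2b_0}{4-\mu}\big)$, combined with $\|u\|_2^2\le\lambda_1^{-1}\|u\|^2$. The only cosmetic difference is that the paper keeps the two error terms $C_5\|u\|^{4-\mu}$ and $C_6\|u\|^{2q}$ separate in the final lower bound for $J(u)$, whereas you absorb $\|u\|^{\frac{4q}{4-\mu}}$ into $\|u\|^2$ for small $\rho$ and end up with a single $C\|u\|^{4-\mu}$; both conclusions are equivalent since $4-\mu>2$ and $2q>2$.
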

	\begin{proof}
		For any $\varepsilon>0$, $q>1$ and $\alpha>\alpha_0$, there exists $C>0$ in \eqref{3.2}, we have
		\begin{align}
			\int\limits_\Omega Q^{\frac{4}{4-\mu}}(|x|)|F(u)|^{\frac{4}{4-\mu}}dx &\leq \int\limits_\Omega Q^{\frac{4}{4-\mu}}(|x|) \Bigg(\varepsilon_1 |u|^{\frac{4-\mu}{2}} + C|u|^q e^{\alpha |u|^2}\Bigg)^{\frac{4}{4-\mu}} dx\nonumber\\
			&\leq C_1 \int\limits_\Omega Q^{\frac{4}{4-\mu}}(|x|) |u|^2 dx +C_2 \int\limits_\Omega Q^{\frac{4}{4-\mu}}(|x|) |u|^{\frac{4q}{4-\mu}} e^{\frac{4\alpha |u|^2}{4-\mu}} dx.\nonumber
		\end{align}
		Now, from Lemma \ref{lem3} with $p= \frac{4-\mu}{2}$, we have
		\begin{align*}
			\int\limits_\Omega Q^{\frac{4}{4-\mu}}(|x|) |u|^2 dx \leq C \|u\|^2.
		\end{align*}
		Using H\"older's inequality with $\frac{1}{v}+\frac{1}{v'}=1$, \ $qv\geq \frac{4-\mu}{2}$ and Lemma \ref{lem3}, we have
		\begin{align*}
			\int\limits_\Omega Q^{\frac{4}{4-\mu}}(|x|) |u|^{\frac{4q}{4-\mu}} e^{\frac{4\alpha |u|^2}{4-\mu}} dx &\leq \Bigg( \int\limits_\Omega Q^{\frac{4}{4-\mu}}(|x|) |u|^{\frac{4qv}{4-\mu}} dx\Bigg)^{\frac{1}{v}} \Bigg( \int\limits_\Omega Q^{\frac{4}{4-\mu}}(|x|) e^{\frac{4\alpha v' |u|^2}{4-\mu}} dx\Bigg)^{\frac{1}{v'}}\\
			&\leq C\|u\|^{\frac{4q}{4-\mu}}\Bigg( \int\limits_\Omega Q^{\frac{4}{4-\mu}}(|x|) e^{\frac{4\alpha v' |u|^2}{4-\mu}} dx\Bigg)^{\frac{1}{v'}}.
		\end{align*}
		Choosing $v$ close to 1, $\alpha$ close to $\alpha_0$ and $\|u\|=\rho$ sufficiently small such that
		$\frac{4\alpha v'}{4-\mu}\|u\|^2 \leq 4\pi \Bigg(1+\frac{2b_0}{4-\mu}\Bigg)$. Combining the above three inequalities with Lemma \ref{lem2.2}, we can find constants $C_3, C_4 >0$ such that
		\begin{align}
			\int\limits_\Omega Q^{\frac{4}{4-\mu}}(|x|)|F(u)|^{\frac{4}{4-\mu}}dx &\leq C_3 \|u\|^2+ C_4 \|u\|^{\frac{4q}{4-\mu}}. \label{2}
		\end{align}
		Using Proposition \ref{Hardy}, \eqref{2} and by characterization of $\lambda_1$ given in \eqref{1.6}, we can obtain
		\begin{align}
			J(u) &= \frac{1}{2} \|u\|^2 - \frac{\lambda}{2} \|u\|^2_2 - \1\int\limits_\Omega\Bigg( \int\limits_\Omega \frac{Q(|y|)F(u(y))}{|x-y|^\mu}dy\Bigg) Q(|x|)F(u(x))dx\nonumber\\
			& \geq \frac{1}{2} \|u\|^2 - \frac{\lambda}{2\lambda_1} \|u\|^2_2 - \frac{ C_{\mu}}{2} \Bigg(\int\limits_\Omega Q^{\frac{4}{4-\mu}}(|x|) |F(u(x))|^{\frac{4}{4-\mu}} \Bigg)^{\frac{4-\mu}{2}} \nonumber\\
			& \geq  \frac{1}{2}\Bigg(1-\frac{\lambda}{\lambda_1}\Bigg) \|u\|^2 - C_5 \|u\|^{4-\mu} -C_6 \|u\|^{2q}. \label{3}
		\end{align}
		Next, we denote $g(\rho)=C\rho^2-C_5\rho^{4-\mu}-C_6\rho^{2q}$. Observe that $g(\rho) = 0$ when $\rho = 0$ and $4-\mu>2$, $2q>2$. Hence, for sufficiently small $\rho>0$, there exists $\delta>0$ such that $g(\rho) \geq \delta >0$.
	\end{proof}
	\begin{prop}
		\label{prop4.3}
		Assume \eqref{H1}-\eqref{AR}. Then there exists $v \in \hr$ with $\|v\|>\rho$ such that $J(v)<0$.
	\end{prop}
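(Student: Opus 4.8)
The plan is to produce $v$ of the form $v=t\,u_0$ for a fixed nonnegative radial test function $u_0$ and $t>0$ large, and to show that along this ray the Choquard term grows strictly faster than the quadratic part of $J$, so that $J(tu_0)\to-\infty$.

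First I would fix $u_0\in C^\infty_{0,rad}(\Omega)$ with $u_0\ge 0$, $u_0\not\equiv 0$ and, crucially, with $\supp u_0$ a compact subset of $\Omega\setminus\{0\}$; such a function exists because $\Omega$ is open and $\mathcal{O}(2)$-invariant, hence contains an annulus $\{x:r<|x|<R\}$, on which a radial bump is supported. Since $Q\in C(0,\infty)$ and $Q>0$ on $(0,\infty)$, the constant $q_0:=\min_{x\in\supp u_0}Q(|x|)$ is strictly positive, which is exactly where the possible singularity of $Q$ at the origin is neutralised. Next, from \eqref{AR} I would extract the usual polynomial lower bound for $F$: for $s>0$ one has $F(s)>0$ and $f(s)/F(s)\ge K/s$, so integrating $\frac{d}{ds}\ln F(s)\ge \frac{d}{ds}(K\ln s)$ on $[s_0,s]$ gives $F(s)\ge c_1 s^{K}$ for $s\ge s_0$, with $c_1:=F(s_0)s_0^{-K}>0$. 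Picking $\delta_0>0$ and a set $A\subset\supp u_0$ of positive Lebesgue measure with $u_0\ge\delta_0$ on $A$, we get, for every $t\ge s_0/\delta_0$, that $F(tu_0(x))\ge c_1\delta_0^{K}t^{K}$ for $x\in A$.

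Then I would bound the nonlocal term from below by restricting both integrations to $A$ and using $Q\ge q_0$ there:
\[
\int\limits_\Omega\Bigg(\int\limits_\Omega\frac{Q(|y|)F(tu_0(y))}{|x-y|^\mu}dy\Bigg)Q(|x|)F(tu_0(x))\,dx
\;\ge\; q_0^{2}c_1^{2}\delta_0^{2K}\,t^{2K}\!\!\int\limits_{A}\!\int\limits_{A}\frac{dx\,dy}{|x-y|^\mu}\;=:\;c_3\,t^{2K},
\]
where $c_3>0$ is finite since $0<\mu<2$ and $A$ is bounded with positive measure (the double integral is a Riesz potential of $\chi_A\in L^1$, which is locally integrable). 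Combining this with the identity $J(tu_0)=\tfrac{t^2}{2}\big(\|u_0\|^2-\lambda\|u_0\|_2^2\big)-\tfrac12\,(\text{nonlocal term})$ coming from \eqref{J} gives
\[
J(tu_0)\;\le\;\frac{t^2}{2}\big(\|u_0\|^2-\lambda\|u_0\|_2^2\big)-\frac{c_3}{2}\,t^{2K}\qquad\text{for }t\ge s_0/\delta_0 .
\]
Since $K>1$ we have $2K>2$, so the right-hand side tends to $-\infty$ as $t\to\infty$; hence there is $t^{\ast}>0$ with $\|t^{\ast}u_0\|=t^{\ast}\|u_0\|>\rho$ and $J(t^{\ast}u_0)<0$, and $v:=t^{\ast}u_0$ is the required function.

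The only genuinely delicate points are the two flagged above: keeping $\supp u_0$ away from the origin so that $Q$ is bounded below by a positive constant there (this is the new feature compared with the unweighted case), and the finiteness of $\int_A\int_A|x-y|^{-\mu}\,dx\,dy$, which is immediate from $\mu\in(0,2)$. Everything else is the standard super-quadratic mountain-pass computation.
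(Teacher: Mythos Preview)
Your argument is correct. You and the paper both show $J(tu_0)\to-\infty$ along a fixed ray, but you reach the $t^{2K}$ lower bound by a different route. The paper treats the whole Choquard term as a function $\psi(t)=I(tu_0/\|u_0\|)$ and derives the differential inequality $\psi'(t)/\psi(t)\ge 2K/t$ directly from \eqref{AR} applied inside the convolution, then integrates to get $I(su_0)\ge I(u_0/\|u_0\|)\,\|u_0\|^{2K}s^{2K}$; the weight $Q$ never appears explicitly because it is absorbed into the single positive constant $I(u_0/\|u_0\|)$. You instead first extract the pointwise bound $F(s)\ge c_1 s^{K}$ from \eqref{AR}, then restrict both integrations to a sublevel set $A$ on which $u_0$ is bounded below and $Q\ge q_0>0$. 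Your approach is more hands-on and makes the role of the weight transparent (the choice $\supp u_0\subset\Omega\setminus\{0\}$ neatly sidesteps any singularity of $Q$ at the origin), at the cost of a slightly longer setup; the paper's ODE argument is shorter and works for any $u_0\in\hr$ with $u_0^+\not\equiv 0$, without needing to control $Q$ pointwise.
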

	\begin{proof}
		Fix $u_0 \in \hr\setminus \{0\}$. For each $t>0$, we define
		\begin{align}
			\psi(t)= I\Big( \frac{tu_0}{\|u_0\|}\Big), \text{ where } I(u)= \frac{1}{2}\int\limits_\Omega \Bigg( \int\limits_\Omega \frac{Q(|y|)F(u(y))}{|x-y|^\mu}dy\Bigg) Q(|x|)F(u(x))dx. \nonumber
		\end{align}
		Then using assumption \eqref{AR}, we have
		\begin{align}
			\frac{\psi'(t)}{\psi(t)} &= \frac{\psi'(t)t}{\psi(t)t}\geq \frac{I'\Big( \frac{tu_0}{\|u_0\|}\Big)\frac{tu_0}{\|u_0\|}}{I\Big( \frac{tu_0}{\|u_0\|}\Big)t}\nonumber\\
			&\geq \frac{ \displaystyle\int\limits_\Omega \Big( \int\limits_\Omega \frac{Q(|y|)F\Big(u\Big( \frac{tu_0(y)}{\|u_0\|}\Big)\Big)}{|x-y|^\mu}dy\Big) Q(|x|)f\Big(u\Big( \frac{tu_0(y)}{\|u_0\|}\Big)\Big) u \Big( \frac{tu_0(x)}{\|u_0\|}\Big)dx}{\displaystyle\1 \displaystyle\int\limits_\Omega \Big( \int\limits_\Omega \frac{Q(|y|)F\Big(u\Big( \frac{tu_0(y)}{\|u_0\|}\Big)\Big)}{|x-y|^\mu}dy\Big) Q(|x|)F\Big(u\Big( \frac{tu_0(x)}{\|u_0\|}\Big)\Big)dx}\nonumber\geq \frac{2K}{t}, \ \ \forall \ \ t>0. \nonumber
		\end{align}
		Integrating the above integral over $[1, s\|u_0\|]$ with $s> \frac{1}{\|u_0\|}$, then
		\begin{align}
			\log(\psi(s\|u_0\|))-\log(\psi(1))&\geq 2K (\log(s\|u_0\|)).\nonumber
		\end{align}
		By simple calculation, we have
		\begin{align}
			I(su_0) \geq I\Big(\frac{u_0}{\|u_0\|}\Big) \|u_0\|^{2K}s^{2K}. \label{1}
		\end{align}
		By the definition of the functional $J$ and from \eqref{1}, we can get
		\begin{align}
			J(su_0) &\leq \frac{s^2}{2}\|u_0\|^2-\frac{\lambda s^2}{2} \|u_0\|^2_2- I\Big(\frac{u_0}{\|u_0\|}\Big)\|u_0\|^{2K}s^{2K}\nonumber\\
			&\leq C_1s^2-C_2s^2-C_3s^{2K}, \nonumber
		\end{align}
		which implies that $J(su_0) \rightarrow -\infty$ as $s\rightarrow \infty$. Therefore, for $v=su_0$ with $s$ large enough, the result holds.
	\end{proof}
	
	\subsection{The minimax level} The eventual loss of compactness of the Palais-Smale sequence occurs when dealing with critical problems. We need to bring the level $c$ below a threshold value to restore compactness. This requires us to prove that $J$ satisfies the Palais-Smale condition at the level $c$. To reestablish the compactness of $\{u_n\}$, we first calculate the upper bound of the mountain-pass level $c$. Inspired from \cite{ Adimurthi1, Cassani, Figueiredo}, let us consider the Moser's functions defined as follows:
	\begin{equation}
		\nonumber
		M_n(x) = \frac{1}{\sqrt{2\pi}}
		\left\{
		\begin{aligned}
			\sqrt{\log{n}}, && |x|< \frac{1}{n}\\
			\frac{\log{(1/|x|)}}{\sqrt{\log{n}}},  &&\frac{1}{n} \leq |x| < 1\\
			0, && |x| \geq 1
		\end{aligned}
		\right.
	\end{equation} 
	
	By simple computations, we see that $M_n$ satisfies the following estimates:
	\begin{align*}
		\|M_n\|^2_2 &= \int\limits_{B_1} |M_n|^2 dx= \frac{1}{2\pi} \int\limits_0^{2\pi} \int\limits_0^{\frac{1}{n}} \log{n}r dr + \frac{1}{2\pi}\int\limits_0^{2\pi} \int\limits_{\frac{1}{n}}^1 \frac{\log^2{1/r}}{\log{n}}rdr=  \frac{1}{4\log{n}}-\frac{1}{4n^2\log{n}}- \frac{1}{2n^2}.
	\end{align*}
	Now,
	\begin{align*}
		\|M_n\|^2 &= \int\limits_{B_1} |\nabla M_n|^2 dx= \frac{1}{2\pi} \int\limits_0^{2\pi} \int\limits_{\frac{1}{n}}^1 \Bigg( \frac{r}{\sqrt{\log{n}}} \cdot \frac{-1}{r^2}\Bigg)^2 r dr =  \int\limits_{\frac{1}{n}}^1 \frac{1}{r\log{n}}dr = 1.
	\end{align*}
	\begin{prop}
		\label{prop5}
		Assume \eqref{H1}, \eqref{d}, \eqref{H4}. Then, mountain-pass level $c< \frac{(4-\mu)\pi}{2\alpha_0}\Big( 1+ \frac{2b_0}{4-\mu}\Big)$.
	\end{prop}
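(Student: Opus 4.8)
The plan is to carry out the standard Moser-function estimate: exhibit one $n$ for which $\max_{t\ge0}J(tM_n)<T$, where $T:=\frac{(4-\mu)\pi}{2\alpha_0}\big(1+\frac{2b_0}{4-\mu}\big)=\frac{\pi}{2\alpha_0}(4-\mu+2b_0)$. Since the mountain-pass level satisfies $c\le\max_{t\ge0}J(tM_n)$ for every $n$ (for $R_n$ large the path $t\mapsto R_n t M_n$ on $[0,1]$ joins $0$ to a point where $J<0$ and, suitably completed, lies in $\Gamma$), this is enough. I would argue by contradiction, assuming $\max_{t\ge0}J(tM_n)\ge T$ for every $n$. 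Since $J(0)=0$ while, by \eqref{H4} together with $F\ge0$, the nonlocal term of $J$ grows faster than $t^2$ along $tM_n$ (so $J(tM_n)\to-\infty$ as $t\to+\infty$), the maximum is attained at some $t_n>0$.

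At such a maximizer two facts are available. Using $\|M_n\|=1$ and discarding the two nonpositive terms in
\[
J(t_nM_n)=\tfrac12 t_n^2\|M_n\|^2-\tfrac{\lambda}{2}t_n^2\|M_n\|_2^2-\tfrac12\iint\frac{Q(|y|)F(t_nM_n(y))}{|x-y|^\mu}\,Q(|x|)F(t_nM_n(x)),
\]
I get $J(t_nM_n)\le\tfrac12 t_n^2$, hence $t_n^2\ge 2T=\frac{\pi}{\alpha_0}(4-\mu+2b_0)$. Next, $J(tM_n)$ being maximal at $t=t_n$, its $t$-derivative vanishes there, which gives $t_n^2(1-\lambda\|M_n\|_2^2)=\mathcal N_n$, where
\[
\mathcal N_n:=\iint\frac{Q(|y|)F(t_nM_n(y))}{|x-y|^\mu}\,Q(|x|)f(t_nM_n(x))\,t_nM_n(x);
\]
since $0<\lambda\|M_n\|_2^2<1$ this yields $\mathcal N_n\le t_n^2$.

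The crux is a matching lower bound for $\mathcal N_n$ exploiting the concentration of $M_n$: on $B_{1/n}$ one has $M_n\equiv c_n:=(2\pi)^{-1/2}\sqrt{\log n}$, and since $t_n^2\ge 2T>0$ we have $t_nc_n\to\infty$, so $t_nc_n\ge s_0$ for $n$ large. Restricting both integrals defining $\mathcal N_n$ to $B_{1/n}$ (where all integrands are nonnegative), and using \eqref{Q} in the form $Q(r)\ge c_* r^{b_0}$ near $0$, \eqref{d} in the form $f(s)s\ge M_0^{-1}s^{v+1}F(s)$ for $s\ge s_0$, \eqref{H4} in the form $F(s)\ge\tfrac{\beta_0}{2}e^{\alpha_0 s^2}$ for $s$ large, and the scaling identity
\[
\int_{B_{1/n}}|x|^{b_0}\Big(\int_{B_{1/n}}\frac{|y|^{b_0}}{|x-y|^\mu}\,dy\Big)dx=A_{b_0,\mu}\,n^{-(2b_0+4-\mu)},\qquad A_{b_0,\mu}:=\iint_{B_1\times B_1}\frac{|x|^{b_0}\,|y|^{b_0}}{|x-y|^\mu}\,dx\,dy,
\]
(finite precisely because $b_0>-\tfrac{4-\mu}{2}$ and $0<\mu<2$), together with $c_n^2=\tfrac{\log n}{2\pi}$, I obtain a constant $C>0$, independent of $n$, with
\[
\mathcal N_n\ge C\,(t_nc_n)^{v+1}\,e^{\alpha_0 t_n^2 c_n^2}\,n^{-(2b_0+4-\mu)}=C\,(t_nc_n)^{v+1}\,n^{\frac{\alpha_0 t_n^2}{\pi}-(2b_0+4-\mu)},
\]
the exponent $\alpha_0 t_n^2/\pi$ arising because $F$, hence $e^{\alpha_0 t_n^2 c_n^2}$, occurs twice in $\mathcal N_n$.

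To finish: by the first fact $\frac{\alpha_0 t_n^2}{\pi}-(2b_0+4-\mu)=\frac{\alpha_0}{\pi}(t_n^2-2T)\ge0$, so that power of $n$ is $\ge1$; combined with $(t_nc_n)^{v+1}\ge (T/\pi)^{(v+1)/2}(\log n)^{(v+1)/2}$ and $\mathcal N_n\le t_n^2$ this forces $t_n^2\ge C'(\log n)^{(v+1)/2}\to\infty$, hence $t_n\to\infty$. Substituting this back into the displayed bound and taking logarithms, for $n$ large $\frac{\alpha_0}{\pi}(t_n^2-2T)\log n\le\log(t_n^2)+O(1)$; dividing by $t_n^2\to\infty$, the left side tends to $+\infty$ (since $\frac{t_n^2-2T}{t_n^2}\to1$ and $\log n\to\infty$) while the right side tends to $0$ — a contradiction. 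Therefore $\max_{t\ge0}J(tM_n)<T$ for some $n$, i.e. $c<T$. The delicate part is the third step: pinning the lower bound for $\mathcal N_n$ down to the exact exponent $\frac{\alpha_0 t_n^2}{\pi}-(2b_0+4-\mu)$, so that the estimate $t_n^2\ge 2T$ exactly saturates it, and checking that the weighted double integral $A_{b_0,\mu}$ is finite — which is precisely where the condition $b_0>-\tfrac{4-\mu}{2}$ in \eqref{Q} is used.
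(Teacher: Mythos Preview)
Your argument is correct and follows essentially the same route as the paper: argue by contradiction, obtain $t_n^2\ge 2T$ from $J(t_nM_n)\ge T$, use the vanishing of $\frac{d}{dt}J(tM_n)$ at $t_n$ to bound the nonlocal term $\mathcal N_n$ above by $t_n^2$, then restrict both integrals to $B_{1/n}$ and combine \eqref{d}--\eqref{H4} with \eqref{Q} to bound $\mathcal N_n$ below, forcing a contradiction.

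Two small remarks. First, your exact scaling identity
\[
\int_{B_{1/n}}\!\int_{B_{1/n}}\frac{|x|^{b_0}|y|^{b_0}}{|x-y|^\mu}\,dx\,dy=A_{b_0,\mu}\,n^{-(2b_0+4-\mu)}
\]
is cleaner than the paper's estimate: the paper replaces $|x|^{b_0}|y|^{b_0}$ by $n^{-2b_0}$ on $B_{1/n}$, which is only a \emph{lower} bound when $b_0\le 0$; your scaling works uniformly for all $b_0>-\tfrac{4-\mu}{2}$. Second, in your endgame, from $t_n^2\ge C(t_nc_n)^{v+1}$ one gets $t_n^{\,1-v}\ge C'(\log n)^{(v+1)/2}$ rather than the claimed $t_n^{2}\ge C'(\log n)^{(v+1)/2}$; this still gives $t_n\to\infty$ when $v<1$ (and an immediate contradiction when $v=1$), so the conclusion is unaffected. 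Likewise, after taking logarithms the right-hand side is actually $(1-v)\log t_n-(v+1)\log c_n+O(1)$, which is smaller than your ``$\log(t_n^2)+O(1)$'' and hence only strengthens the contradiction. The paper organises the final step slightly differently---first showing $\{s_n\}$ is bounded, then $s_n^2\to 2T$, then plugging back in---but the content is the same.
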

	\begin{proof}
		Because for $u_0\not \equiv0$, $J(su_0)\to -\infty$ as $s \to \infty$ by Proposition \ref{prop4.3}, and since $c \leq \displaystyle \max_{[0,1]} J(tu_0)$ for $u_0\in \hr \setminus \{0\}$ satisfying $J(u_0) < 0$, it is sufficient to find a $w\in \hr$ such that $\|w\|=1$ and
		\[\max_{s\geq 0} J(sw_n)< \frac{(4-\mu)\pi}{2\alpha_0}\Big( 1+ \frac{2b_0}{4-\mu}\Big).
		\] 
		For this, consider the sequence of Moser functions $\{M_n\}$ defined as above. We claim that there exists $n\in \mathbb{N}$ such that $\displaystyle\max_{s\geq 0} J(sw_n)< \frac{(4-\mu)\pi}{2\alpha_0}\Big( 1+ \frac{2b_0}{4-\mu}\Big)$. Suppose by contradiction, the above inequality does not hold. Then, this maximum is larger than or equal to $\frac{(4-\mu)\pi}{2\alpha_0}\Big( 1+ \frac{2b_0}{4-\mu}\Big)$. Let $s_n >0$ be such that 
		\begin{equation}
			\label{4}
			J(s_nw_n) = \max_{s \geq 0} J(sw_n).
		\end{equation}
		Then $J(s_nw_n) \geq \frac{(4-\mu)\pi}{2\alpha_0}\Big( 1+ \frac{2b_0}{4-\mu}\Big), \forall n \in \mathbb{N}.$ This implies that
		\begin{equation}
			\label{5}
			s_n^2 \geq \frac{(4-\mu)\pi}{2\alpha_0}\Big( 1+ \frac{2b_0}{4-\mu}\Big).
		\end{equation}
		From \eqref{4}, $\frac{d}{ds}J(sw_n)\left|_{s=s_n}=0 \right.$. We multiply by $s_n$ and using the above proposition, we have
		\begin{align}
			s_n^2 -\lambda s_n^2 - \int\limits_\Omega \Bigg( \int\limits_\Omega \frac{Q(|y|)F(s_nw_n(y))}{|x-y|^\mu}dy \Bigg) Q(|x|)f(s_nw_n(x))s_n w_n(x)dx =0. \nonumber
		\end{align} 
		This further implies that 
		\begin{align}
			s_n^2 \geq \int\limits_\Omega \Bigg( \int\limits_\Omega Q(|y|)\frac{F(s_nw_n(y))}{|x-y|^\mu}dy \Bigg) Q(|x|)f(s_nw_n(x))s_n w_n(x)dx. \label{6}
		\end{align}
		Combining \eqref{d} and \eqref{H4}, for all $\varepsilon \in (0, \beta_0)$, there exists $R_\varepsilon>0$ such that
		\[ F(s)f(s)s \geq M_0^{-1} (\beta_0-\varepsilon)^2 s^{v+1}e^{2\alpha_0 s^2}, \ \forall s \geq R_\varepsilon.\] 
		From \eqref{6} and the above inequality implies that, for large $n$, we obtain
		\begin{align}
			s_n^2 &\geq \int\limits\limits_{B_{1/n}(0)} \Bigg( \int\limits\limits_{B_{1/n(0)}} \frac{Q(|y|)F(s_nw_n(y))}{|x-y|^\mu}dy \Bigg) Q(|x|)f(s_nw_n(x))s_n w_n(x)dx\nonumber\\
			& \geq F\Bigg( \frac{s_n \sqrt{\log{n}}}{\sqrt{2\pi}}\Bigg) f\Bigg( \frac{s_n \sqrt{\log{n}}}{\sqrt{2\pi}}\Bigg) \frac{s_n \sqrt{\log{n}}}{\sqrt{2\pi}} \int\limits\limits_{B_{1/n}(0)} \int\limits\limits_{B_{1/n}(0)} \frac{Q(|x|)Q(|y|)}{|x-y|^\mu}dxdy\nonumber\\
			& \geq M_0^{-1}(\beta_0-\varepsilon)^2 \Bigg( \frac{\log{n}}{2\pi}\Bigg)^{\frac{v+1}{2}} s_n^{v+1} e^{\alpha_0 s_n^2 \pi^{-1}\log{n}} \int\limits\limits_{B_{1/n}(0)} \int\limits\limits_{B_{1/n}(0)} \frac{|x|^{b_0} |y|^{b_0}}{|x-y|^\mu}\nonumber\\
			& = M_0^{-1}(\beta_0-\varepsilon)^2  \Bigg( \frac{\log{n}}{2\pi}\Bigg)^{\frac{v+1}{2}} \frac{s_n^{v+1} e^{\alpha_0 s_n^2 \pi^{-1}\log{n}}}{n^{2b_0}} \int\limits\limits_{B_{1/n}(0)} \int\limits\limits_{B_{1/n}(x)} \frac{dzdx}{|z|^\mu}\nonumber\\
			& \geq M_0^{-1}(\beta_0-\varepsilon)^2 \Bigg( \frac{\log{n}}{2\pi}\Bigg)^{\frac{v+1}{2}} \frac{s_n^{v+1} e^{\alpha_0 s_n^2 \pi^{-1}\log{n}}}{n^{2b_0}} \int\limits\limits_{B_{1/n}(0)} \int\limits\limits_{B_{\frac{1}{n}-|x|}(0)} \frac{dzdx}{|z|^\mu}\nonumber\\
			& \geq M_0^{-1}(\beta_0-\varepsilon)^2 \Bigg( \frac{\log{n}}{2\pi}\Bigg)^{\frac{v+1}{2}} \frac{4\pi^2}{(2-\mu)(3-\mu)(4-\mu)} \frac{s_n^{v+1} e^{\alpha_0 s_n^2 \pi^{-1}\log{n}}}{n^{4+2b_0-\mu}} \nonumber
		\end{align}
		By some simple computations, we have
		\begin{align}
			s_n^2	& \geq  M_0^{-1}(\beta_0-\varepsilon)^2 \Bigg( \frac{1}{2\pi}\Bigg)^{\frac{v+1}{2}} \frac{4\pi^2}{(2-\mu)(3-\mu)(4-\mu)} s_n^{v+1} e^{[\alpha_0 s_n^2 \pi^{-1}-(4+2b_0-\mu)]\log{n}+ \frac{v+1}{2}\log\log{n}}. \label{7}
		\end{align}
		For large $n$, we have $\log\log{n}>0$, so we can ignore this term. Then,
		\begin{align}
			(1-v)\log{s_n} & \geq  \log\left[M_0^{-1}(\beta_0-\varepsilon)^2 \Bigg( \frac{1}{2\pi}\Bigg)^{\frac{v+1}{2}} \frac{4\pi^2}{(2-\mu)(3-\mu)(4-\mu)} \right] \nonumber \\
			& \quad+ [\alpha_0 s_n^2 \pi^{-1}-(4+2b_0-\mu)]\log{n}. \label{8}
		\end{align}
		If $\{s_n\}$ is unbounded, up to a subsequence $s_n \rightarrow +\infty$ as $n\rightarrow +\infty$ and then 
		\begin{align}
			\frac{(1-v)\log s_n }{s_n^2} & \geq s_n^{-2} \log\left[M_0^{-1}(\beta_0-\varepsilon)^2 \Bigg( \frac{1}{2\pi}\Bigg)^{\frac{v+1}{2}} \frac{4\pi^2}{(2-\mu)(3-\mu)(4-\mu)} \right] \nonumber \\
			& \quad+ [\alpha_0 \pi^{-1}-s_n^{-2}(4+2b_0-\mu)]\log{n} \nonumber
		\end{align}
		gives a contradiction. Therefore, passing to a subsequence, there exists a positive constant $s_0$ such that
		\[\lim_{n\rightarrow \infty} s_n^2 = s_0^2 \geq \frac{(4-\mu)\pi}{\alpha_0} \Big( 1+ \frac{2b_0}{4-\mu}\Big).\]
		Moreover, $s_0= \frac{(4-\mu)\pi}{\alpha_0}\Big( 1+ \frac{2b_0}{4-\mu}\Big)$, otherwise a contradiction occurred by \eqref{8}. Let's take $n\rightarrow\infty$ in \eqref{7}, this implies a contradiction. Therefore, $c < \frac{(4-\mu)\pi}{2\alpha_0} \Big( 1+\frac{2b_0}{4-\mu}\Big)$ holds.
	\end{proof}
	
	\begin{lem}
		\label{lem4.6}
		Assume \eqref{H1} and \eqref{d}. Then, there exists a function $u_0\in H^1_{0,rad}(\Omega) \setminus \{0\}$ such that $J'(u_0)=0$.
	\end{lem}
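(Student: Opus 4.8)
The plan is to obtain $u_0$ as a critical point coming from the mountain-pass theorem (Theorem \ref{thm4.1}), so the work amounts to verifying all of its hypotheses for the functional $J$ restricted to $\hr$. First I would record that $J(0)=0$ and that $J$ is $C^1$ on $\hr$, as established in Section \ref{sec3}. The mountain-pass geometry (i) and (ii) are already proven in Proposition \ref{prop4.2} and Proposition \ref{prop4.3} respectively, under \eqref{H1}--\eqref{AR}, which are among the hypotheses available here. Thus the only remaining item is (iii): that $J$ satisfies the Palais-Smale condition at every level $c$ in a suitable interval $(0,\beta)$, and that the mountain-pass level $c$ actually lies in that interval.

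For the location of the level, Proposition \ref{prop5} gives the crucial upper bound
\[
0 < c < \frac{(4-\mu)\pi}{2\alpha_0}\Big(1+\frac{2b_0}{4-\mu}\Big),
\]
using \eqref{H1}, \eqref{d}, \eqref{H4}; the lower bound $c\geq\delta>0$ is part of the conclusion of Theorem \ref{thm4.1} together with Proposition \ref{prop4.2}. So I would take $\beta=\frac{(4-\mu)\pi}{2\alpha_0}\big(1+\frac{2b_0}{4-\mu}\big)$. The heart of the argument is then the Palais-Smale condition for $c$ in this range. Given a Palais-Smale sequence $\{u_n\}\subset\hr$ at such a level, Lemma \ref{lem3.1} makes it bounded, so up to a subsequence $u_n\rightharpoonup u_0$ in $\hr$, with strong convergence in every $L^p(\Omega)$ and a.e. convergence. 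By Lemma \ref{lemma3.4}, for every $\phi\in C^\infty_{0,rad}(\Omega)$ the nonlocal term tested against $\phi$ passes to the limit, and since $C^\infty_{0,rad}(\Omega)$ is dense in $\hr$ this together with \eqref{3.5} shows $J'(u_0)=0$; in particular $u_0$ is a weak solution. To upgrade weak to strong convergence (hence genuine PS condition) one uses $J'(u_n)u_n\to 0$ and $J'(u_0)u_0=0$: subtracting, and invoking Lemma \ref{lem3.2} to control the $F$-type nonlocal energy, one reduces $\|u_n\|^2\to\|u_0\|^2$ to showing the $f(u_n)u_n$ nonlocal term converges; the sub-threshold bound on $c$ is exactly what feeds into the weighted Trudinger--Moser inequality (Lemma \ref{lem2.2}) and the Lions-type improvement (Lemma \ref{lem2.3}) to furnish a uniformly integrable majorant for $\big(\int Q F(u_n)|x-y|^{-\mu}dy\big)Q f(u_n)u_n$ on a ball $B_R$, the exterior part being handled by \eqref{2.8}; then Vitali's theorem gives convergence of that term, hence $u_n\to u_0$ strongly in $\hr$.

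The main obstacle is this last compactness step: extracting the uniform integrability of the nonlocal product from the strict inequality $\alpha_0 c < \frac{(4-\mu)\pi}{2}\big(1+\frac{2b_0}{4-\mu}\big)$. One has to argue that a nonzero weak limit cannot carry all the energy by estimating, via the Brezis--Lieb splitting for the nonlocal term used in Lemma \ref{lem3.2}, the concentration parameter $\|u_n-u_0\|^2$ and showing it is strictly below the threshold governed by the exponent $4\pi(1+\frac{2b_0}{4-\mu})$, so that Lemma \ref{lem2.3} applies to $u_n/\|u_n-u_0\|$; the alternative case $u_0\equiv 0$ must be excluded separately, again using the level bound from Proposition \ref{prop5} to contradict the normalized Trudinger--Moser integrability. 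Once strong convergence is in hand, $u_0$ is a critical point of $J$ at level $c>0$, hence $u_0\neq 0$, which is the assertion of Lemma \ref{lem4.6}.
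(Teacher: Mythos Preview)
Your outline is correct in spirit but does considerably more than Lemma~\ref{lem4.6} requires, and in doing so it buries the one genuinely nontrivial step. The lemma only asserts that the weak limit of a Palais--Smale sequence is a nonzero critical point; it does not claim strong convergence, and the paper does not invoke Theorem~\ref{thm4.1} to prove it. In fact the paper's logical order is the reverse of yours: Lemma~\ref{lem4.6} is proved first, and then Lemma~\ref{lem4.7} (the $(PS)_c$ condition) uses Lemma~\ref{lem4.6} as input, precisely to guarantee $u_0\neq 0$ so that $0<\|v\|<1$ and the Lions improvement (Lemma~\ref{lem2.3}) applies. By trying to establish the full $(PS)_c$ condition inside Lemma~\ref{lem4.6}, you end up needing to exclude $u_0\equiv 0$ ``separately'' anyway---and that exclusion \emph{is} the content of the lemma.

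The paper's direct argument for $u_0\neq 0$ is sharper than your sketch suggests. Assuming $u_0\equiv 0$, Lemma~\ref{lem3.2} gives that the $F$-type nonlocal energy tends to $0$; combined with $J(u_n)\to c$ and $\|u_n\|_2\to 0$ this yields $\|u_n\|^2\to 2c$, which by Proposition~\ref{prop5} lies strictly below the Trudinger--Moser threshold. That bound, via Lemma~\ref{lem2.2} alone (no Lions improvement needed here), makes the $f(u_n)u_n$-type nonlocal integral uniformly bounded. The decisive step is then a Cauchy--Schwarz inequality between the two nonlocal forms:
\[
\int_\Omega\Big(\int_\Omega\frac{QF(u_n)}{|x-y|^\mu}\Big)Qf(u_n)u_n
\;\le\;
\Big[\text{bounded }f u\text{-term}\Big]^{1/2}\Big[\text{vanishing }F\text{-term}\Big]^{1/2}\to 0,
\]
which, together with $J'(u_n)u_n\to 0$, forces $\|u_n\|^2\to 0$ and hence $c=0$, a contradiction. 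Your phrase ``contradict the normalized Trudinger--Moser integrability'' does not capture this Cauchy--Schwarz mechanism, which is the actual engine of the proof. Once $u_0\neq 0$ is secured, $J'(u_0)=0$ follows immediately from Lemma~\ref{lemma3.4} and density, with no need for strong convergence.
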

	\begin{proof}
		Let $\{u_n\}$ be a Palais-Smale sequence in $\hr$, then by Lemma \ref{lem3.1}, $\{ \|u_n\|\}$ is uniformly bounded in $n\in \mathbb{N}$.  This implies that, passing to a subsequence, there exists a function $u_0\in \hr$ such that $u_n \rightharpoonup u_0$ in $\hr$, $u_n \rightarrow u_0$ in $L^p(\Omega)$, with $p \geq 1$ and $u_n(x) \rightarrow u_0(x)$ a.e. in $\Omega$.\\
		We claim that $u_0\neq0$. On the contrary, suppose $u_0 \equiv 0$. According to $J'(u_n)u_n \rightarrow 0$, we conclude that \eqref{3.14} holds true for some $C_0>0$ and then from Lemma \ref{lem3.2}, we have
		\begin{align}
			&\lim_{n\rightarrow \infty} \int\limits_\Omega \Big( \int\limits_\Omega \frac{Q(|y|)F(u_n(y))}{|x-y|^\mu}dy\Big)Q(|x|) F(u_n(x))dx \nonumber\\
			& \quad \ \ \ = \int\limits_\Omega \Big( \int\limits_\Omega \frac{Q(|y|)F(u_0(y))}{|x-y|^\mu}\Big)dy Q(|x|) F(u_0(x))dx=0 \label{4.9}
		\end{align}
		which together with \eqref{3.4} and Proposition \ref{prop5} implies that 
		\begin{align}
			\label{4.10}
			\lim_{n\rightarrow \infty} \|u_n\|^2 = 2c < \frac{(4-\mu)\pi}{\alpha_0} \Big( 1+\frac{2b_0}{4-\mu}\Big).
		\end{align}
		From Proposition \ref{Hardy} and equation \eqref{3.1}, we have
		\begin{align}
			&\Bigg| \int\limits_\Omega \Bigg( \int\limits_\Omega \frac{Q(|y|)f(u_n(y))u_n(y)}{|x-y|^\mu}dy \Bigg) Q(|x|) f(u_n(x)) u_n(x) dx\Bigg|\nonumber\\
			& \quad \leq C_\mu \Bigg( \int\limits_\Omega Q^{\frac{4}{4-\mu}}(|x|) |f(u_n(x)) u_n(x)|^{\frac{4}{4-\mu}}dx\Bigg)^{\frac{4-\mu}{2}}\nonumber\\
			& \quad \leq C_1 \Bigg( \int\limits_\Omega Q^{\frac{4}{4-\mu}}(|x|) |u_n|^2dx\Bigg)^\frac{4-\mu}{2} + C_2 \Bigg( \int\limits_\Omega Q^{\frac{4}{4-\mu}}(|x|) |u_n|^{\frac{4q}{4-\mu}} e^{\frac{4\alpha |u_n|^2}{4-\mu}} dx\Bigg)^{\frac{4-\mu}{2}}.\nonumber
		\end{align}
		Putting $p=\frac{4-\mu}{2}$ in Lemma \ref{lem3}, we get
		\begin{align}
			\label{4.11}
			&\Bigg| \int\limits_\Omega \Bigg( \int\limits_\Omega \frac{Q(|y|)f(u_n(y))u_n(y)}{|x-y|^\mu}dy \Bigg) Q(|x|) f(u_n(x)) u_n(x) dx\Bigg|\nonumber\\
			& \quad \leq C_3 \|u_n\|^{4-\mu} + C_2 \Bigg(\int\limits_\Omega Q^{\frac{4}{4-\mu}}(|x|) |u_n|^{\frac{4q}{4-\mu}} e^{\frac{4\alpha |u_n|^2}{4-\mu}}dx \Bigg)^{\frac{4-\mu}{2}}. 
		\end{align}
		Now, we choose $\alpha > \alpha_0$ sufficiently close to $\alpha_0$ and $\frac{4qv}{4-\mu} \geq 2$ such that $\frac{1}{v}+\frac{1}{v'}=1$ and $\frac{4\alpha v' \|u_n\|^2}{4-\mu} \leq 4\pi \Big(1+\frac{2b_0}{4-\mu}\Big)$. With this choice, by H\"older's inequality and Lemma \ref{lem2.2},
		\begin{align}
			&\Bigg(\int\limits_\Omega Q^{\frac{4}{4-\mu}}(|x|) |u_n|^{\frac{4q}{4-\mu}} e^{\frac{4\alpha |u_n|^2}{4-\mu}} dx \Bigg)^{\frac{4-\mu}{2}}\nonumber\\
			& \quad \ \ \ \leq \Bigg( \int\limits_\Omega Q^{\frac{4}{4-\mu}}(|x|) |u_n|^{\frac{4qv}{4-\mu}}dx \Bigg)^{\frac{4-\mu}{2v}} \Bigg( \int\limits_\Omega Q^{\frac{4}{4-\mu}}(|x|) e^{\frac{4\alpha v'|u_n|^2}{4-\mu}} dx \Bigg)^{\frac{4-\mu}{2v'}}\nonumber\\
			& \quad \ \ \  \leq C \|u_n\|^{2q}. \nonumber
		\end{align}
		Using the above estimate in \eqref{4.11}, we obtain a constant $C>0$ such that
		\begin{align}
			&\Bigg| \int\limits_\Omega \Bigg( \int\limits_\Omega \frac{Q(|y|)f(u_n(y))u_n(y)}{|x-y|^\mu}dy \Bigg) Q(|x|) f(u_n(x)) u_n(x) dx\Bigg| \leq C_3 \|u_n\|^{4-\mu} + C_4\|u_n\|^{2q} \leq C. \nonumber
		\end{align}
		Applying Cauchy-Schwarz inequality with \eqref{4.9}, 
		\begin{align*}
			&\Bigg| \int\limits_\Omega \Bigg( \int\limits_\Omega \frac{Q(|y|)F(u_n(y))}{|x-y|^\mu}dy \Bigg) Q(|x|) f(u_n(x)) u_n(x) dx\Bigg| \\
			&\leq \Bigg[ \int\limits_\Omega \Bigg( \int\limits_\Omega \frac{Q(|y|)f(u_n(y))u_n(y)}{|x-y|^\mu}dy \Bigg) Q(|x|) f(u_n(x)) u_n(x) dx\Bigg]^{\1} \times\\
			& \quad \quad \Bigg[ \int\limits_\Omega \Bigg( \int\limits_\Omega \frac{Q(|y|)F(u_n(y))}{|x-y|^\mu}dy \Bigg) Q(|x|) F(u_n(x)) dx\Bigg]^{\1}\\
			& \quad \rightarrow 0 \text{ as } n \rightarrow \infty.
		\end{align*}
		This together with $J'(u_n)u_n \to 0$ implies that $\|u_n\| \to 0$, which further implies from \eqref{4.10} that $c=0$. But the mountain-pass value $c\neq 0$, hence we get a contradiction. Therefore $u_0 \neq 0$. Furthermore, from Lemma \ref{lemma3.4} and $J'(u_n)u_n \rightarrow 0$, we get $J'(u_0)=0$.
	\end{proof}
	\begin{lem}
		\label{lem4.7}
		Assume \eqref{H1} and \eqref{AR}. Then the functional $J$ satisfies $(PS)_c$ condition for all $c< \frac{(4-\mu)\pi}{2\alpha_0}\Big( 1+\frac{2b_0}{4-\mu}\Big).$ 
	\end{lem}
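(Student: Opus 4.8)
The plan is to run the concentration-compactness argument that is standard for exponential-critical functionals, in which the ceiling $c^{*}:=\frac{(4-\mu)\pi}{2\alpha_0}\big(1+\frac{2b_0}{4-\mu}\big)$ is precisely what forbids a loss of compactness. Throughout, abbreviate the two nonlocal quantities by $\mathcal{B}(u):=\int_\Omega\Big(\int_\Omega \frac{Q(|y|)F(u(y))}{|x-y|^{\mu}}\,dy\Big)Q(|x|)F(u(x))\,dx$ and $\mathcal{A}(u):=\int_\Omega\Big(\int_\Omega \frac{Q(|y|)F(u(y))}{|x-y|^{\mu}}\,dy\Big)Q(|x|)f(u(x))u(x)\,dx$, so that $J(u)=\tfrac12\|u\|^2-\tfrac{\lambda}{2}\|u\|_2^2-\tfrac12\mathcal{B}(u)$ and $J'(u)u=\|u\|^2-\lambda\|u\|_2^2-\mathcal{A}(u)$. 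Let $\{u_n\}\subset\hr$ be a Palais-Smale sequence at a level $c<c^{*}$. By Lemma \ref{lem3.1} it is bounded, so up to a subsequence $u_n\rightharpoonup u_0$ in $\hr$, and by the compact embeddings $\hr\hookrightarrow L^p(\Omega)$, $1\le p<\infty$, also $u_n\to u_0$ in every $L^p(\Omega)$ and a.e. in $\Omega$. Testing $J'(u_n)\to 0$ against $\phi\in C^\infty_{0,rad}(\Omega)$, weak convergence disposes of the linear terms while Lemma \ref{lemma3.4} handles the nonlocal one; since $C^\infty_{0,rad}(\Omega)$ is dense in $\hr$ and $J'$ is continuous, $J'(u_0)=0$. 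Because $F\ge 0$, $Q\ge 0$ and the Riesz kernel is positive, \eqref{AR} gives $\mathcal{A}(u_0)\ge K\,\mathcal{B}(u_0)\ge 0$, and combining this with $J'(u_0)u_0=0$, i.e. $\|u_0\|^2-\lambda\|u_0\|_2^2=\mathcal{A}(u_0)$, we get $J(u_0)=\tfrac12\big(\mathcal{A}(u_0)-\mathcal{B}(u_0)\big)\ge\tfrac12(K-1)\mathcal{B}(u_0)\ge 0$.

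Next, I would locate the energy. By Lemma \ref{lem3.2} one has $\mathcal{B}(u_n)\to\mathcal{B}(u_0)$, and by the compact embedding $\hr\hookrightarrow L^2(\Omega)$, $\|u_n\|_2\to\|u_0\|_2$; substituting this into $J(u_n)\to c$ yields $\|u_n\|^2\to 2c+\lambda\|u_0\|_2^2+\mathcal{B}(u_0)$. With $v_n:=u_n-u_0\rightharpoonup 0$ in $\hr$ and $\langle v_n,u_0\rangle\to 0$, and using the identity $\|u_0\|^2=2J(u_0)+\lambda\|u_0\|_2^2+\mathcal{B}(u_0)$ from the previous paragraph, we obtain $\|v_n\|^2=\|u_n\|^2-\|u_0\|^2-2\langle v_n,u_0\rangle\to 2c-2J(u_0)\le 2c<2c^{*}=\frac{(4-\mu)\pi}{\alpha_0}\big(1+\frac{2b_0}{4-\mu}\big)$. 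This strict inequality, which rests on $J(u_0)\ge 0$, is the crux of the proof: it leaves a positive margin in the weighted Trudinger-Moser inequality. (We may assume $\liminf_n\|v_n\|>0$, for otherwise a subsequence already converges strongly.)

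The main obstacle is to turn that margin into a uniform exponential estimate along $\{u_n\}$. Using the strict inequality just obtained, fix $\alpha>\alpha_0$, $r>1$, $\delta>0$ and $\theta>1$, all sufficiently close to $\alpha_0,1,0,1$ respectively, so that $\frac{4\alpha r(1+\delta)\theta}{4-\mu}\,\limsup_n\|v_n\|^2<4\pi\big(1+\frac{2b_0}{4-\mu}\big)$. From $u_n^2\le(1+\delta)v_n^2+(1+\delta^{-1})u_0^2$, Hölder's inequality with exponents $\theta,\theta'$, Lemma \ref{lem2.2} applied to the unit vectors $v_n/\|v_n\|$, and the fact that $Q^{\frac{4}{4-\mu}}(|x|)e^{C u_0^2}\in L^1(\Omega)$ for every $C>0$ (Remark \ref{rem1}), one derives $\sup_n\int_\Omega Q^{\frac{4}{4-\mu}}(|x|)\,e^{\frac{4\alpha r}{4-\mu}u_n^2}\,dx<\infty$. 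Feeding this together with \eqref{3.1}, \eqref{3.2}, a further Hölder splitting, and the convergence of the polynomial factors supplied by Lemma \ref{lem3}, the families $\{Q^{\frac{4}{4-\mu}}|f(u_n)u_n|^{\frac{4}{4-\mu}}\}$ and $\{Q^{\frac{4}{4-\mu}}|F(u_n)|^{\frac{4}{4-\mu}}\}$ are shown to be equi-integrable on the bounded set $\Omega$; since they converge a.e. to the corresponding expressions for $u_0$, Vitali's convergence theorem gives $Qf(u_n)u_n\to Qf(u_0)u_0$ and $QF(u_n)\to QF(u_0)$ in $L^{\frac{4}{4-\mu}}(\Omega)$.

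Finally, since both factors converge in $L^{\frac{4}{4-\mu}}(\Omega)$ and stay bounded there, the Hardy-Littlewood-Sobolev inequality (Proposition \ref{Hardy} with $N=2$ and $s=r=\tfrac{4}{4-\mu}$), applied to the bilinear form $(g,h)\mapsto\int_\Omega\int_\Omega g(x)h(y)|x-y|^{-\mu}\,dx\,dy$, yields $\mathcal{A}(u_n)\to\mathcal{A}(u_0)$. Letting $n\to\infty$ in $J'(u_n)u_n=\|u_n\|^2-\lambda\|u_n\|_2^2-\mathcal{A}(u_n)=o(1)$, and using $\|u_n\|_2\to\|u_0\|_2$ and $\mathcal{A}(u_n)\to\mathcal{A}(u_0)=\|u_0\|^2-\lambda\|u_0\|_2^2$, we conclude $\|u_n\|\to\|u_0\|$; since $u_n\rightharpoonup u_0$ in the Hilbert space $\hr$ and the norms converge, $u_n\to u_0$ strongly in $\hr$, establishing the $(PS)_c$ condition for all $c<c^{*}$. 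I expect the exponential estimate of the third paragraph to be the only genuinely delicate step; the first, second and fourth paragraphs are bookkeeping with the convergence lemmas of Section \ref{sec3}, and one need only check that everything goes through verbatim in the trivial case $u_0\equiv 0$ (where $\mathcal{A}(u_0)=\mathcal{B}(u_0)=0$).
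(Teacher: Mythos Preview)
Your argument is correct, and it reaches the same conclusion as the paper, but the route is genuinely different in two places.

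\textbf{Exponential control.} The paper does not split $u_n=v_n+u_0$. Instead it normalises $v_n:=u_n/\|u_n\|$, sets $v:=u_0/\lim_n\|u_n\|$, computes $\lim_n\|u_n\|^2=\frac{2c-2J(u_0)}{1-\|v\|^2}$, and then invokes the Lions-type refinement (Lemma~\ref{lem2.3}) directly on $\{v_n\}$, which requires $0<\|v\|<1$ and hence the prior knowledge $u_0\neq 0$ supplied by Lemma~\ref{lem4.6}. Your decomposition $u_n^2\le(1+\delta)(u_n-u_0)^2+(1+\delta^{-1})u_0^2$, followed by the \emph{standard} weighted Trudinger--Moser bound (Lemma~\ref{lem2.2}) on $(u_n-u_0)/\|u_n-u_0\|$ together with Remark~\ref{rem1} for the fixed function $u_0$, is an equally valid and slightly more elementary way to get the same uniform bound $\sup_n\int_\Omega Q^{\frac{4}{4-\mu}}e^{\frac{4\alpha r}{4-\mu}u_n^2}<\infty$. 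Your route has the advantage of not needing $u_0\neq 0$ as a separate input (the case $u_0\equiv 0$ is absorbed with $v_n=u_n$), so it does not lean on Lemma~\ref{lem4.6}.

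\textbf{Passage to the limit.} The paper proves only the ``cross'' convergence $\int_\Omega\big(\int_\Omega\frac{QF(u_n)}{|x-y|^\mu}\big)Qf(u_n)(u_n-u_0)\,dx\to 0$, combines it with $J'(u_n)(u_0-u_n)\to 0$, and closes via convexity of $u\mapsto\tfrac12\|u\|^2$ plus Fatou to obtain $\|u_n\|\to\|u_0\|$. You instead push the exponential bound through Vitali to get $Qf(u_n)u_n\to Qf(u_0)u_0$ and $QF(u_n)\to QF(u_0)$ in $L^{\frac{4}{4-\mu}}(\Omega)$, and then read off $\mathcal{A}(u_n)\to\mathcal{A}(u_0)$ from the bilinear continuity of the Riesz form (Proposition~\ref{Hardy}); the identity $J'(u_n)u_n=o(1)$ then gives norm convergence directly. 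This is a cleaner, slightly stronger conclusion, at the price of the Vitali/equi-integrability bookkeeping you sketch.

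Both arguments implicitly rely on \eqref{d} through Lemmas~\ref{lem3.2} and~\ref{lemma3.4}, even though the lemma's hypotheses list only \eqref{H1} and \eqref{AR}; this is a feature of the paper's presentation, not of your proof.
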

	\begin{proof}
		Suppose $\{u_n\}$ is a Palais-Smale sequence in $\hr$. As we have seen in Lemma \ref{lem4.6}, there exists a $u_0 \in \hr\setminus\{0\}$ such that $u_n \rightharpoonup u_0$ in $\hr$, $u_n \to u_0$ in $L^p(\Omega)$ with $1 \leq p < \infty$ and $u_n \to u_0 $ a.e. in $\Omega$.  For each $n\in \mathbb N$, we define $v_n$ and $v$ as
		\begin{align*}
			v_n = \frac{u_n}{\|u_n\|} , \text{ and }  v=\frac{u_0}{\displaystyle \lim_{n\to \infty} \|u_n\|}.
		\end{align*}
		By Fatou's lemma, $0 < \|u_0\| \leq \displaystyle \liminf_{n \to \infty} \|u_n\|$, this implies that $0< \|v\| \leq 1$. If $\|v\|=1$, then $\|u_n\| \to \|u_0\|$ which together with $u_n \rightharpoonup u_0$ in $\hr$ implies that $u_n \to u_0$ in $\hr$. In this case, the proof is done. So, we assume $0< \|v\|<1$. Since $J(u_n) \to c$, from Lemma \ref{lem3.2}, we have
		\begin{align}
			1-\|v\|^2 &= 1- \frac{\|u_0\|^2}{\displaystyle\lim_{n\to \infty}\|u_n\|^2}\nonumber\\
			& = \frac{2c+ \lambda \|u_0\|^2_2 + \displaystyle\int\limits_\Omega \Bigg( \int\limits_\Omega \frac{Q(|y|)F(u_0(y))}{|x-y|^\mu}dy \Bigg) Q(|x|)F(u_0(x))dx - \|u_0\|^2}{{\displaystyle\lim_{n\to \infty}}\|u_n\|^2}.\nonumber
		\end{align}
		This implies that
		\begin{align}
			\label{4.12}
			\lim_{n\to \infty}\|u_n\|^2 = \frac{2c-2J(u_0)}{1-\|v\|^2}.
		\end{align}
		From Lemma \ref{lem4.6}, $J'(u_0) = 0$, then we have
		\begin{align}
			J(u_0)= J(u_0) - \1 J'(u_0)u_0 = \1 \int\limits_\Omega \Bigg( \int\limits_\Omega \frac{Q(|y|)F(u_0(y))}{|x-y|^\mu}dy\Bigg)Q(|x|) [f(u_0(x)) u_0(x)- F(u_0(x))] dx, \nonumber
		\end{align}
		which together with \eqref{AR}, we can say that $J(u_0) \geq 0$.
		Using this inequality with \eqref{4.12}, we obtain
		\begin{align}
			\lim_{n\to \infty} \|u_n\|^2 \leq \frac{2c}{1-\|v\|^2} < \frac{(4-\mu)\pi}{\alpha_0(1-\|v\|^2)}\Big( 1+\frac{2b_0}{4-\mu}\Big). \nonumber
		\end{align}
		Choosing $\alpha>\alpha_0$ sufficiently close to $\alpha_0$ and $p>1$ close to 1 such that $\frac{1}{p}+\frac{1}{p'} =1$ and \begin{align}
			\frac{4\alpha p \|u_n\|^2}{4-\mu} < \frac{4\pi}{1-\|v\|^2}\Bigg( 1+\frac{2b_0}{4-\mu}\Bigg).\nonumber
		\end{align}
		Then from above inequality and Lemma \ref{lem2.3}, we have
		\begin{align}
			\label{4.13}
			\sup_{n \in \mathbb{N}} \int\limits_\Omega Q^{\frac{4}{4-\mu}}(|x|)e^{\frac{4\alpha p}{4-\mu}|u_n|^2}dx &= \sup_{n \in \mathbb{N}} \int\limits_\Omega Q^{\frac{4}{4-\mu}}(|x|)e^{\frac{4\alpha p}{4-\mu}\|u_n\|^2|v_n|^2}dx < +\infty. 
		\end{align}
		Next, we claim that 
		\begin{align}
			\label{4.14}\int\limits_\Omega \Bigg( \int\limits_\Omega \frac{Q(|y|)F(u_n(y))}{|x-y|^\mu}dy\Bigg) Q(|x|) f(u_n(x)) (u_n(x)-u_0(x))dx \to 0.
		\end{align}
		To prove this claim, we use Proposition \ref{Hardy}, 
		\begin{align}
			\label{4.15}
			&\int\limits_\Omega \Bigg( \int\limits_\Omega \frac{Q(|y|)F(u_n(y))}{|x-y|^\mu}dy\Bigg) Q(|x|) f(u_n(x)) (u_n(x)-u_0(x))dx \\
			&\leq C_\mu \Bigg(\int\limits_\Omega Q^{\frac{4}{4-\mu}}(|x|) |F(u_n)|^{\frac{4}{4-\mu}}dx\Bigg)^{\frac{4-\mu}{4}} \Bigg(\int\limits_\Omega Q^{\frac{4}{4-\mu}}(|x|) |f(u_n)(u_n-u_0)|^{\frac{4}{4-\mu}} dx\Bigg)^{\frac{4-\mu}{4}}.\nonumber
		\end{align}
		We estimate first integral using \eqref{3.2}, Lemma \ref{lem3} and \eqref{4.13}, we can find a constant $\overline{C}$ satisfying 
		\begin{align}
			&\int\limits_\Omega Q^{\frac{4}{4-\mu}}(|x|) |F(u_n)|^{\frac{4}{4-\mu}}dx \nonumber\\
			&\leq C \int\limits_\Omega Q^{\frac{4}{4-\mu}}(|x|) |u_n|^2 dx + C \int\limits_\Omega Q^{\frac{4}{4-\mu}}(|x|) |u_n|^{\frac{4q}{4-\mu}} e^{\frac{4\alpha}{4-\mu}|u_n|^2}dx\nonumber\\
			& \leq C\|u_n\|^2 +C \Bigg( \int\limits_\Omega Q^{\frac{4}{4-\mu}}(|x|) |u_n|^{\frac{4qp'}{4-\mu}} dx\Bigg)^{\frac{1}{p'}} \Bigg( \int\limits_\Omega Q^{\frac{4}{4-\mu}}(|x|) e^{\frac{4\alpha p}{4-\mu}|u_n|^2}dx\Bigg)^{\frac{1}{p}}\nonumber\\
			& \leq C\|u_n\|^2 +C \|u_n\|^{\frac{4q}{4-\mu}} \Bigg( \int\limits_\Omega Q^{\frac{4}{4-\mu}}(|x|) e^{\frac{4\alpha p}{4-\mu}|u_n|^2}dx\Bigg)^{\frac{1}{p}}\nonumber\leq \overline{C} <+\infty. \nonumber
		\end{align}
		On the other hand, from \eqref{3.1} with $q>1$, Lemma \ref{lem3}, \eqref{4.13} and taking $q>2$
		\begin{align}
			&\int\limits_\Omega Q^{\frac{4}{4-\mu}}(|x|) |f(u_n)(u_n-u_0)|^{\frac{4}{4-\mu}} dx\nonumber\\
			&\leq C \int\limits_\Omega Q^{\frac{4}{4-\mu}}(|x|) |u_n|^{\frac{2(2-\mu)}{4-\mu}}|u_n-u_0|^{\frac{4}{4-\mu}}dx + C \int\limits_\Omega Q^{\frac{4}{4-\mu}}(|x|) |u_n-u_0|^{\frac{4}{4-\mu}}|u_n|^{\frac{4(q-1)}{4-\mu}} e^{\frac{4\alpha}{4-\mu}|u_n|^2}dx\nonumber\\
			&\leq C \Bigg(\int\limits_\Omega Q^{\frac{4}{4-\mu}}(|x|) |u_n|^{2}dx\Bigg)^{\frac{2-\mu}{4-\mu}} \Bigg(\int\limits_\Omega Q^{\frac{4}{4-\mu}}(|x|) |u_n-u_0|^{2}dx\Bigg)^{\frac{2}{4-\mu}}\nonumber\\
			&\quad +C \Bigg( \int\limits_\Omega Q^{\frac{4}{4-\mu}}(|x|) |u_n-u_0|^{\frac{4p'}{4-\mu}} |u_n|^{\frac{4(q-1)p'}{4-\mu}}dx\Bigg)^{\frac{1}{p'}} \Bigg( \int\limits_\Omega Q^{\frac{4}{4-\mu}}(|x|) e^{\frac{4\alpha p}{4-\mu}|u_n|^2}dx\Bigg)^{\frac{1}{p}}\nonumber\\
			& \leq C\|u_n\|^{\frac{2(2-\mu)}{4-\mu}} \|u_n-u_0\|^{\frac{4}{4-\mu}} \nonumber\\
			& \quad+ C \Bigg( \int\limits_\Omega Q^{\frac{4}{4-\mu}}(|x|) |u_n-u_0|^{\frac{8p'}{4-\mu}}dx\Bigg)^{\frac{1}{2p'}}\Bigg( |u_n|^{\frac{8(q-1)p'}{4-\mu}}dx\Bigg)^{\frac{1}{2p'}}\Bigg( \int\limits_\Omega Q^{\frac{4}{4-\mu}}(|x|) e^{\frac{4\alpha p}{4-\mu}|u_n|^2}dx\Bigg)^{\frac{1}{p}}\nonumber\\
			& \to 0 \text{ as } n \to \infty.\nonumber
		\end{align}
		Then by the above two estimates and \eqref{4.15}, we conclude the proof of claim \eqref{4.14}. This implies that $J'(u_n)(u_0-u_n) \to 0$. Then applying convexity of the functional $I(u) = \frac{\|u\|^2}{2}$ to obtain
		\begin{align}
			\1\|u_0\|^2 = I(u_0) &\geq I(u_n) + I'(u_n)(u_0-u_n)\nonumber\\
			& = I(u_n) + \int\limits_\Omega \nabla u_n \nabla (u_0-u_n)dx\nonumber
		\end{align}
		and we have 
		\begin{align}
			\1\|u_0\|^2 
			&\geq \1 \|u_n\|^2 + J'(u_n)(u_0-u_n) +\lambda \int\limits_\Omega u_n(u_0-u_n)dx\nonumber\\
			&\quad - \int\limits_\Omega \Bigg( \int\limits_\Omega \frac{Q(|y|)F(u_n)}{|x-y|^\mu}dy\Bigg) Q(|x|)f(u_n)(u_n-u_0)dx.\nonumber
		\end{align}
		Hence, we have $\displaystyle\lim_{n\to \infty} \|u_n\|^2 \leq \|u_0\|^2$. This together with Fatou's lemma implies that $\displaystyle\lim_{n\to \infty} \|u_n\|^2 = \|u_0\|^2$. Since $u_n \rightharpoonup u_0$ in $\hr$, therefore $u_n \to u_0$ in $\hr$. This completes the proof.
	\end{proof}
	\begin{proof}[\bf Proof of Theorem \ref{thm1.1}]
		Using Propositions \ref{prop4.2} and Proposition \ref{prop4.3}, there exists a Palais-Smale sequence $\{u_n\}$ for $J$ in $\hr$, which is bounded by Lemma \ref{lem3.1}. Since $\hr$ is a reflexive, there exists $u_0\in \hr$ such that $\{u_n\}\rightharpoonup u_0$ in $\hr$. From Lemma \ref{lemma3.4}, $u_0$ is a critical point of the functional in $\hr$. Next, we aim to show that $u_0$ is nontrivial. In fact, if $u_0$ is a mountain-pass critical point, then as mountain-pass level $c>0$, implies that $u_0 \not\equiv 0$. Therefore, the only thing remaining to show is that $u_0$ is a mountain-pass critical point. From Lemma \ref{lem4.7} we deduce that $J(u_0)=c$ in $\hr$, where $c$ is mountain-pass level. Since all the assumptions of Theorem \ref{thm4.1} are satisfied, it follows that there exists a nontrivial critical point $u_0$ in $\hr$. To complete the proof, we cite Theorem \ref{thm1.3} (see Appendix for the verification), and establish that $u_0$ is indeed a weak solution of equation \eqref{1.1}.
	\end{proof}
	\section{Linking case where $\lambda_k < \lambda < \lambda_{k+1}$}
	\label{sec5}
	When $\lambda>\lambda_1$, Theorem \ref{thm4.1}-(i) no longer holds, thus our previous existence approach fails. In such cases, we use the following critical point theorem known as the Linking theorem, due to A. Ambrosetti and
	P. Rabinowitz \cite{Ambrosetti}, which provides a milder version of Theorem \ref{thm4.1}-(i).
	\begin{thm}
		\label{thm5.1}
		Let $J:\mathcal{H} \rightarrow \mathbb{R}$ be a $C^1$ functional on a Banach space $(\mathcal{H}, \|\cdot\|)$ such that $\mathcal{H}=\mathcal{H}_1\oplus \mathcal{H}_2$ with $dim \mathcal{H}_1 < \infty$. If $J$ satisfies the following:
		\begin{enumerate}
			\item [(i)] There exist constants $\rho, \delta >0$ such that $J(u) \geq \delta$ for all $u\in \mathcal{H}_2$ satisfying $\|u\|=\rho$.
			\item [(ii)] There exists $z \notin \mathcal{H}_1$ with $\|z\|=1$ and $R>\rho$ such that $J(u) \leq 0$ for all $u \in \partial \mathcal Q$, where 
			\[ \mathcal Q =\{ v+sz : v \in \mathcal{H}_1, \|v\|\leq R \text{ and } 0 \leq s \leq R\}.\]
			\item [(iii)] There exists some $\beta>0$ such that $J$ satisfies the $(PS)_c$ for $c\in (0,\beta)$.
			Then $c$ is defined as 
			\[ c= \inf_{\gamma\in \Gamma} \max_{u\in \mathcal Q} J(\gamma(u)), \]
			where $\Gamma = \{ \gamma \in C(\overline{\mathcal Q}, \mathcal{H}): \gamma(u)=u, \text{ if } u \in \partial \mathcal Q$\}, is a critical value of $J$.
		\end{enumerate}
	\end{thm}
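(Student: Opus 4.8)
The plan is to prove Theorem~\ref{thm5.1} by the classical two-step scheme for minimax theorems: first a topological \emph{linking lemma}, then a \emph{deformation argument}. Throughout, write $\pi_1:\mathcal{H}\to\mathcal{H}_1$ and $\pi_2:\mathcal{H}\to\mathcal{H}_2$ for the projections associated with $\mathcal{H}=\mathcal{H}_1\oplus\mathcal{H}_2$, set $S_\rho:=\{u\in\mathcal{H}_2:\|u\|=\rho\}$, and observe that $c$ is finite, since $c\leq\max_{\overline{\mathcal{Q}}}J<\infty$: $\overline{\mathcal{Q}}$ is a bounded, hence compact, subset of the finite-dimensional subspace $Y:=\mathcal{H}_1\oplus\R z$ (we use $z\in\mathcal{H}_2$, as in the intended application), and $J$ is continuous.

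\textbf{Step 1 (Linking).} I would show that $\gamma(\overline{\mathcal{Q}})\cap S_\rho\neq\emptyset$ for every $\gamma\in\Gamma$. Identify $Y$ with $\mathcal{H}_1\times\R$ via $v+sz\mapsto(v,s)$, so that $\mathcal{Q}$ becomes the bounded open set $\{(v,s):\|v\|<R,\ 0<s<R\}$ of $Y$, and define $\Psi:\overline{\mathcal{Q}}\to Y$ by $\Psi(u):=\big(\pi_1(\gamma(u)),\ \|\pi_2(\gamma(u))\|\big)$. Since $\gamma=\mathrm{id}$ on $\partial\mathcal{Q}$, for $u=v+sz\in\partial\mathcal{Q}$ one has $\Psi(u)=(v,s)$, i.e. $\Psi|_{\partial\mathcal{Q}}$ is the inclusion of $\partial\mathcal{Q}$ into $Y$; checking the three faces of $\partial\mathcal{Q}$ (where $s=0$, where $s=R$, and where $\|v\|=R$) shows that the value $p_0:=(0,\rho)\in Y$ is \emph{not} attained on $\partial\mathcal{Q}$, because $0<\rho<R$. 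Hence the Brouwer degree $\deg(\Psi,\mathcal{Q},p_0)$ is well defined, and since it depends only on $\Psi|_{\partial\mathcal{Q}}=\mathrm{id}_{\partial\mathcal{Q}}$ while $p_0\in\mathcal{Q}$, the normalization property gives $\deg(\Psi,\mathcal{Q},p_0)=1\neq0$. Therefore there is $u\in\mathcal{Q}$ with $\Psi(u)=p_0$, i.e. $\pi_1(\gamma(u))=0$ and $\|\gamma(u)\|=\|\pi_2(\gamma(u))\|=\rho$, so $\gamma(u)\in S_\rho\subset\mathcal{H}_2$. By hypothesis (i), $\max_{\overline{\mathcal{Q}}}J\circ\gamma\geq J(\gamma(u))\geq\delta$, and taking the infimum over $\Gamma$ yields $c\geq\delta>0$. (In the applications one separately checks $c<\beta$ so that (iii) applies at the level $c$.)

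\textbf{Step 2 (Deformation).} Assume, for contradiction, that $c$ is not a critical value of $J$. Since $(PS)_c$ holds (by (iii), using $c\in(0,\beta)$), the quantitative deformation lemma yields numbers $0<\varepsilon'<\varepsilon<c$ and a continuous map $\eta:[0,1]\times\mathcal{H}\to\mathcal{H}$ such that $\eta(0,\cdot)=\mathrm{id}$, $t\mapsto J(\eta(t,u))$ is nonincreasing, $\eta(t,u)=u$ whenever $J(u)\notin(c-\varepsilon,c+\varepsilon)$, and $\eta\big(1,\{J\leq c+\varepsilon'\}\big)\subseteq\{J\leq c-\varepsilon'\}$. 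By the definition of $c$ pick $\gamma\in\Gamma$ with $\max_{\overline{\mathcal{Q}}}J\circ\gamma\leq c+\varepsilon'$, and set $\widetilde{\gamma}:=\eta(1,\gamma(\cdot))$. For $u\in\partial\mathcal{Q}$ hypothesis (ii) gives $J(\gamma(u))=J(u)\leq0<c-\varepsilon$, so $\eta(1,\cdot)$ leaves $\gamma(u)$ fixed; hence $\widetilde{\gamma}=\mathrm{id}$ on $\partial\mathcal{Q}$, i.e. $\widetilde{\gamma}\in\Gamma$. But $\max_{\overline{\mathcal{Q}}}J\circ\widetilde{\gamma}\leq c-\varepsilon'<c$, contradicting the definition of $c$. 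Therefore $c$ is a critical value of $J$.

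The part I expect to be the main obstacle is \textbf{Step 1}: organizing the finite-dimensional auxiliary map $\Psi$ so that $\Psi|_{\partial\mathcal{Q}}$ is literally the identity and $p_0$ avoids $\Psi(\partial\mathcal{Q})$, and then extracting the intersection with $S_\rho$ from $\deg(\Psi,\mathcal{Q},p_0)\neq0$; if one only assumes $z\notin\mathcal{H}_1$ (rather than $z\in\mathcal{H}_2$), the same argument works after bookkeeping the component $\pi_1 z$. Step 2 is routine once the deformation lemma is granted, the single point of care being that $\eta$ fixes $\partial\mathcal{Q}$ pointwise, which is guaranteed by the separation $\sup_{\partial\mathcal{Q}}J\leq0<c-\varepsilon$.
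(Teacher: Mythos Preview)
Your proof is correct and follows the classical route (Brouwer-degree linking plus quantitative deformation) that one finds in standard references for this result. Note, however, that the paper does \emph{not} prove Theorem~\ref{thm5.1}: it is stated as the Linking Theorem ``due to A.~Ambrosetti and P.~Rabinowitz~\cite{Ambrosetti}'' and used as a black box, so there is no proof in the paper to compare against. Your argument is exactly what one would supply if asked to fill in that citation; the only mild remark is that the degree-theoretic Step~1 implicitly uses the linear homotopy $t\Psi+(1-t)\mathrm{id}$ (which agrees with $\mathrm{id}$ on $\partial\mathcal{Q}$) to transfer $\deg(\Psi,\mathcal{Q},p_0)$ to $\deg(\mathrm{id},\mathcal{Q},p_0)=1$, rather than literal ``dependence only on boundary values,'' but this is the standard reading.
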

	In the following propositions, we show the above geometry.
	\begin{prop}
		\label{prop5.2}
		Let $\lambda_k < \lambda< \lambda_{k+1}$ and $f$ satisfies \eqref{H1}. Then there exists $\delta, \rho >0$ such that $J(u) \geq \delta$, for $\|u\|=\rho$ and $u\in H_{k,r}^\perp(\Omega)$.
	\end{prop}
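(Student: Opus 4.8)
\textbf{Proof proposal for Proposition~\ref{prop5.2}.}

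The plan is to mirror the argument of Proposition~\ref{prop4.2}, simply replacing the bottom eigenvalue $\lambda_1$ by $\lambda_{k+1}$ and working on the subspace $H_{k,r}^\perp(\Omega)$ instead of the whole space $\hr$. The one structural point to notice is that, by construction, $H_{k,r}^\perp(\Omega)=\hr\cap H_k^\perp(\Omega)\subset H_k^\perp(\Omega)$, so the variational characterization \eqref{1.7} applies directly to every $u\in H_{k,r}^\perp(\Omega)$ and gives
\[
\|u\|_2^2\le \frac{1}{\lambda_{k+1}}\|u\|^2\qquad\text{for all }u\in H_{k,r}^\perp(\Omega),
\]
and since $\lambda<\lambda_{k+1}$ the coefficient $1-\lambda/\lambda_{k+1}$ is strictly positive.

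First I would estimate the nonlocal term exactly as in the proof of well-definedness of $J$ and of Proposition~\ref{prop4.2}: apply the Hardy--Littlewood--Sobolev inequality (Proposition~\ref{Hardy}) with $s=r=\frac{4}{4-\mu}$, insert the growth bound \eqref{3.2} (with $q>1$ and $\alpha>\alpha_0$), control $\int_\Omega Q^{4/(4-\mu)}(|x|)|u|^2\,dx$ by $\|u\|^2$ using Lemma~\ref{lem3} with $p=\frac{4-\mu}{2}$, and split off the exponential part by Hölder's inequality with conjugate exponents $v,v'$, using Lemma~\ref{lem3} with $p=qv$ together with the weighted Trudinger--Moser inequality (Lemma~\ref{lem2.2}). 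Choosing $v$ close to $1$, $\alpha$ close to $\alpha_0$, and $\rho=\|u\|$ small enough that $\frac{4\alpha v'}{4-\mu}\rho^2\le 4\pi\big(1+\frac{2b_0}{4-\mu}\big)$, this yields constants $C_5,C_6>0$ (independent of $u$ on the sphere $\|u\|=\rho$) with
\[
\int\limits_\Omega\Bigg(\int\limits_\Omega\frac{Q(|y|)F(u(y))}{|x-y|^\mu}\,dy\Bigg)Q(|x|)F(u(x))\,dx\le C_5\|u\|^{4-\mu}+C_6\|u\|^{2q}.
\]

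Combining this with the eigenvalue inequality above gives, for $u\in H_{k,r}^\perp(\Omega)$,
\[
J(u)\ge \frac12\Big(1-\frac{\lambda}{\lambda_{k+1}}\Big)\|u\|^2-\frac{C_5}{2}\|u\|^{4-\mu}-\frac{C_6}{2}\|u\|^{2q}.
\]
Since $4-\mu>2$ and $2q>2$, the scalar function $g(\rho)=\frac12\big(1-\frac{\lambda}{\lambda_{k+1}}\big)\rho^2-\frac{C_5}{2}\rho^{4-\mu}-\frac{C_6}{2}\rho^{2q}$ satisfies $g(0)=0$ and $g(\rho)>0$ for all sufficiently small $\rho>0$; fixing such a $\rho$ (also compatible with the Trudinger--Moser threshold used above) gives $\delta:=g(\rho)>0$ with $J(u)\ge\delta$ whenever $u\in H_{k,r}^\perp(\Omega)$ and $\|u\|=\rho$.

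The argument is essentially routine once the machinery of Sections~\ref{sec2}--\ref{sec4} is in place; the only point that needs a little care is reconciling the two constraints on $\rho$ — the one forced by the exponential threshold in Lemma~\ref{lem2.2} and the one needed to absorb the super-quadratic remainder into the positive quadratic term — but both are met by taking $\rho$ small, so there is no genuine obstacle here beyond bookkeeping. (The analogue of Proposition~\ref{prop4.3} producing the linking set $\mathcal{Q}$, and the verification of the $(PS)_c$ condition for the relevant minimax level, will be handled separately.)
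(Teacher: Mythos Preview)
Your proposal is correct and follows essentially the same approach as the paper: the paper's proof simply says ``proceeding as similar to \eqref{3} and using the characterization of $\lambda_{k+1}$ given in \eqref{1.7}'' to arrive at $J(u)\ge \frac12(1-\lambda/\lambda_{k+1})\|u\|^2-C_5\|u\|^{4-\mu}-C_6\|u\|^{2q}$ and then concludes for small $\rho$. You have spelled out exactly those steps in more detail, and your remark about reconciling the Trudinger--Moser smallness constraint with the polynomial absorption is a nice clarification the paper leaves implicit.
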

	\begin{proof}
		Let $u\in H_{k,r}^\perp(\Omega)$, proceeding as similar to \eqref{3} and using the characterization of $\lambda_{k+1}$ given in \eqref{1.7}, we have
		\begin{align}
			J(u) &\geq \frac{1}{2} \|u\|^2 -\frac{\lambda}{2} \|u\|^2_2 -C_5\|u\|^{4-\mu} -C_6 \|u\|^{2q}\nonumber\\
			& \geq \frac{1}{2} \Bigg(1 - \frac{\lambda}{\lambda_{k+1}}\Bigg)\|u\|^2- C_5\|u\|^{4-\mu} -C_6 \|u\|^{2q}\nonumber\\
			& \geq C\|u\|^2- C_5\|u\|^{4-\mu} -C_6 \|u\|^{2q},\nonumber
		\end{align}
		this implies that for sufficiently small $\|u\|=\rho>0$, there exists $\delta>0$ such that $J(u) \geq \delta$.
	\end{proof}
	\begin{prop}
		\label{prop5.3}
		Let $\lambda_k < \lambda < \lambda_{k+1}$ and \eqref{H1}, \eqref{AR} holds. Define $\mathcal Q = \{ v+sz : v\in H_{k,r}(\Omega), \|v\| \leq R \text{ and } 0 \leq s \leq R, \text{ for some } R> \rho\}$, where $\rho$ is given in Proposition \ref{prop5.2} and $z \in H_{k,r}^\perp(\Omega)$ with $\|z\|=1$. Then $J(u) \leq 0$ for all $u\in \partial \mathcal Q$.
	\end{prop}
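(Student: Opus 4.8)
The plan is to verify $J(u)\le 0$ separately on the three faces of $\partial\mathcal Q$: the face $\{s=0\}$ (where $u=v$, $\|v\|\le R$), the face $\{\|v\|=R,\ 0\le s\le R\}$, and the face $\{s=R,\ \|v\|\le R\}$. On the first face the estimate is immediate: since $u=v\in H_{k,r}(\Omega)\subset H_k(\Omega)$, the characterization \eqref{1.8} gives $\|v\|^2\le\lambda_k\|v\|_2^2$, hence $\frac12\|v\|^2-\frac\lambda2\|v\|_2^2\le\frac12\bigl(1-\tfrac{\lambda}{\lambda_k}\bigr)\|v\|^2\le 0$ because $\lambda>\lambda_k$; moreover $f$ vanishes on $(-\infty,0]$ and is nonnegative on $(0,\infty)$ by \eqref{H1}--\eqref{AR}, so $F\ge0$ and the Choquard term in \eqref{J} is nonnegative, giving $J(v)\le 0$ for every $R>\rho$.

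For the remaining two faces I would observe that every such $u$ lies in the finite dimensional space $W:=H_{k,r}(\Omega)\oplus\R z$ and, since $\langle v,z\rangle=0$, satisfies $\|u\|^2=\|v\|^2+s^2\ge R^2$; hence it suffices to show that $J(u)\to-\infty$ as $\|u\|\to\infty$ with $u\in W$ and then take $R$ large. To get super-quadratic decay of the Choquard term $\mathcal N(u)$ I would use that $\Omega$ is bounded, so that $|x-y|^{-\mu}\ge(\operatorname{diam}\Omega)^{-\mu}=:c_\Omega>0$ on $\Omega\times\Omega$ and thus $\mathcal N(u)\ge\frac{c_\Omega}{2}\bigl(\int_\Omega Q(|x|)F(u(x))\,dx\bigr)^2$; by \eqref{AR} one has $F(s)\ge c_0 s^{K}-c_1$ for $s\ge0$, and $Q\in L^1(\Omega)$ because $b_0>-\tfrac{4-\mu}{2}>-2$ and $Q$ is continuous on $(0,\infty)$ with $\Omega$ bounded, so $\int_\Omega QF(u)\ge c_0\int_\Omega Q|u|^{K}-c_1\|Q\|_{L^1(\Omega)}$; finally, on the finite dimensional $W$ all norms are equivalent, in particular $u\mapsto\bigl(\int_\Omega Q|u|^{K}\bigr)^{1/K}$ is a norm comparable to $\|\cdot\|$, whence $\mathcal N(u)\ge c\|u\|^{2K}-C$ for $\|u\|$ large. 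Since $\frac12\|u\|^2-\frac\lambda2\|u\|_2^2\le\frac12\|u\|^2$ and $2K>2$, this yields $J(u)\le\frac12\|u\|^2-c\|u\|^{2K}+C\to-\infty$, and the claim follows with $R$ large enough.

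The main obstacle is the degeneracy caused by $F\equiv0$ on $(-\infty,0]$: the estimate for $\int_\Omega QF(u)$ must really be written with the positive part $u^{+}$ in place of $u$, and $u\mapsto\|u^{+}\|_{L^{K}_{Q}}$ is \emph{not} a norm, so the coercivity estimate above degenerates along rays pointing into the cone $\{u\le0\}\cap W$. This is the linking counterpart of the difficulty already encountered in Proposition \ref{prop4.3}, and I would resolve it by a compactness argument on the (compact) unit sphere $S_W$ of $W$: along directions $w$ with $F(w)\not\equiv0$ the super-quadratic lower bound for $\mathcal N(tw)$ described above — equivalently the estimate $\mathcal N(tw)\ge\mathcal N(w)\,t^{2K}$ obtained exactly as in the proof of Proposition \ref{prop4.3} — applies uniformly, whereas along directions with $F(w)\equiv0$ the Choquard term vanishes but $\frac12\|w\|^2-\frac\lambda2\|w\|_2^2\le 0$, once $z\in H_{k,r}^\perp(\Omega)$ with $\|z\|=1$ is chosen so that no ray $v+sz$ with $s>0$ stays inside $\{u\le0\}$, which is the freedom in the choice of $z$ permitted by Theorem \ref{thm5.1}(ii). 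Combining the two cases over $S_W$ produces a single $R$ for which $J\le 0$ on all of $\partial\mathcal Q$.
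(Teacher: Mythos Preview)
Your three-face split and the argument on $\{s=0\}$ match the paper exactly. For $\mathcal Q_2\cup\mathcal Q_3$ the paper follows the same outline --- super-quadratic growth of the Choquard term on the finite-dimensional space $W=H_{k,r}(\Omega)\oplus\mathbb R z$, combined with $\|u\|\ge R$ --- but obtains the growth via the estimate $I(tu_0)\ge I\bigl(u_0/\|u_0\|\bigr)\,t^{2K}$ from Proposition~\ref{prop4.3} rather than your diameter bound plus the pointwise inequality $F(s)\ge c_0s^K-c_1$ on $[0,\infty)$; both routes lead to the same conclusion.

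You are right that the sign degeneracy is a genuine issue: the super-quadratic constant (either $I(u_0/\|u_0\|)$ in the paper or $\mathcal N(w)$ in your version) vanishes along directions $w\in S_W$ with $w\le 0$ a.e., and the paper's proof simply ignores this --- it passes from ``$J(tu_0)\to-\infty$ for fixed $u_0$'' to ``$J(u)<0$ once $\|u\|\ge R$'' without addressing uniformity. Your proposed remedy, however, steps outside the proposition as stated: $z\in H_{k,r}^\perp(\Omega)$ is given arbitrarily here, so you are not free to impose that no ray $v+sz$ with $s>0$ lies in $\{u\le 0\}$. That freedom only becomes available in Section~5.1, where the specific choice $z=z_n=W_n/\|W_n\|$ is made, and Lemma~\ref{lem5.4}(ii) (showing $W_n$ is large and positive on $B_{1/n}$) does yield your condition for that particular $z$. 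Finally, your ``combining the two cases over $S_W$'' sentence hides real work: since $\mathcal N(w)\to 0$ as $w$ approaches the cone $\{w\le 0\}\cap S_W$, the two regimes do not glue by taking the worse of two uniform thresholds. One needs an explicit compactness argument on the closed half-sphere $S_W\cap\{s\ge 0\}$: assume $\|u_n\|\to\infty$ with $J(u_n)>0$, extract $w_n:=u_n/\|u_n\|\to w_0$, and treat separately the case $w_0^+\not\equiv 0$ (where $\mathcal N(w_n)\to\mathcal N(w_0)>0$) and the case $w_0\le 0$ a.e.\ (where, under your condition on $z$, necessarily $w_0\in H_{k,r}$, so the strictly negative quadratic part persists for $w_n$ by continuity).
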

	\begin{proof}
		For some $R>0$, we split $\partial \mathcal Q$ into following three parts:\\
		$$ \mathcal Q_1 =\{ u\in H_{k,r}(\Omega): \|u\| \leq R\}$$
		$$ \mathcal Q_2= \{ u+sz: u\in H_{k,r}(\Omega), \|u\|=R \text{ and } 0\leq s\leq R\}$$
		$$\mathcal Q_3 = \{ u+Rz: u\in H_{k,r}(\Omega), \|u\|\leq R\}.$$
		Case 1. If $u\in \mathcal Q_1$, this implies $u\in H_{k,r}(\Omega)$ and by characterization of $\lambda_k$ given in \eqref{1.8} together with the fact that $F(s)\geq 0$, we have
		\begin{equation*}
			J(u) \leq \frac{1}{2}\Bigg( 1-\frac{\lambda}{\lambda_k}\Bigg)\|u\|^2 -\frac{1}{2} \int\limits\limits_\Omega \Bigg( \int\limits\limits_\Omega \frac{Q(|y|)F(u(y))}{|x-y|^\mu}dy\Bigg) Q(|x|)F(u(x))dx\leq \frac{1}{2}\Bigg( 1-\frac{\lambda}{\lambda_k}\Bigg)R^2<0,
		\end{equation*}
		for any choice of $R>0$.\\
		Before verifying the claim on $\mathcal Q_2, \mathcal Q_3$, let us fix some $u_0 \in H_{k,r}(\Omega)$ and define a map $\phi: \R \rightarrow \R$ as $\phi(t) = J(tu_0)$. From \eqref{1}, we have
		\begin{equation*}
			\phi(t) \leq \frac{t^2}{2} \|u_0\|^2 -\frac{\lambda t^2}{2} \|u_0\|^2_2 -C t^{2K} \|u_0\|^{2K}, \ \text{where } K>1,
		\end{equation*}
		which implies that $\phi(t) \rightarrow-\infty$ as $t\rightarrow\infty$.\\
		Case 2. If $u\in \mathcal Q_2$, there exists $v\in H_{k,r}(\Omega)$ and $0\leq s \leq R$ such that $u=v+sz$. Moreover,
		\begin{align*}
			\|u\|^2 = \|v+sz\|^2 = \|v\|^2 + s^2\|z\|^2 \geq \|v\|^2 =R^2.
		\end{align*}
		Therefore, if we choose $R>0$ sufficiently large, we have $J(u)<0$. \\
		Case 3. Now, if $u\in \mathcal Q_3$, then there exists $v\in H_{k,r}(\Omega)$ such that $u=v+Rz$. Moreover,
		\begin{equation*}
			\|u\|^2 = \|v+Rz\|^2 = \|v\|^2 +R^2\|z\|^2 \geq R^2.
		\end{equation*}
		For $R>0$ large enough we have $J(u)<0$. 
	\end{proof}
	\subsection{The minimax level}
	We have to choose a $z\in H_{k,r}^\perp(\Omega)$ such that $\|z\|=1$ and $J(u) < \frac{(4-\mu)\pi}{2\alpha_0}\Big( 1+\frac{2b_0}{4-\mu}\Big), \ \forall \ u \in Q.$ Let $P_k: \hr \rightarrow H_{k,r}^\perp(\Omega)$ be the orthogonal projection. Let us define 
	\begin{equation} \label{5.1}
		W_n(x) = P_k(M_n(x))
	\end{equation} 
	Since $H_{k,r}$ is a finite dimensional subspace, then there exists $A_0>0$ and $B_0>0$ such that
	\begin{equation}
		\label{5.2}
		\left\{ 
		\begin{aligned}
			&\|u\|^2 \leq A_0 \|u\|^2_2 \ \text{ and }\\
			&\|u\|_\infty \leq \frac{B_0}{B} \|u\|_2 
		\end{aligned} 
		\  \ \ \forall\ \ \ u\in H_k,
		\right.
	\end{equation}
	where $B >0$ such that $\|M_n\|_2 \leq \frac{B}{\sqrt{\log{n}}}$ \ $\forall \ n\in \mathbb{N}$.
	\begin{lem}
		\label{lem5.4}
		Let $W_n$ be defined in \eqref{5.1}. Then the following estimates hold:
		\begin{enumerate}
			\item [(i)] $1 - \frac{A_0}{\log{n}} \leq \|W_n\|^2 \leq 1$.
			\item [(ii)] $W_n(x) \geq \left\{ 
			\begin{aligned}
				&\frac{-B_0}{\sqrt{\log{n}}},  &&\ \Omega\setminus B_{\frac{1}{n}}(0)\\
				& \frac{1}{\sqrt{2\pi}}\sqrt{\log{n}}- \frac{B_0}{\sqrt{\log{n}}}, && \ B_{\frac{1}{n}}(0)
			\end{aligned}\right. $
		\end{enumerate}
	\end{lem}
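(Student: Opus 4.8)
The plan is to control the ``correction term'' $Z_n := M_n - W_n = (\mathrm{Id}-P_k)M_n\in H_{k,r}(\Omega)$ using only the already established facts that $\|M_n\|=1$ and $\|M_n\|_2=O(1/\sqrt{\log n})$, since $W_n$ differs from $M_n$ exactly by $Z_n$. Because $P_k$ is the orthogonal projection of $\hr$ onto $H_{k,r}^\perp(\Omega)$ for the Dirichlet inner product $\langle\cdot,\cdot\rangle$, the Pythagorean identity gives
\[
1=\|M_n\|^2=\|W_n\|^2+\|Z_n\|^2,
\]
so $\|W_n\|^2\le 1$ and the lower bound in (i) is equivalent to $\|Z_n\|^2\le A_0/\log n$. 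Similarly, once $\|Z_n\|_\infty\le B_0/\sqrt{\log n}$ is known, (ii) follows at once from $W_n(x)=M_n(x)-Z_n(x)$: on $\Omega\setminus B_{1/n}(0)$ one has $M_n\ge 0$, and on $B_{1/n}(0)$ one has $M_n\equiv\frac{1}{\sqrt{2\pi}}\sqrt{\log n}$.

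The only delicate point is to pass from the Dirichlet-orthogonal projection $P_k$ to the $L^2$-orthogonal projection, so as to obtain $\|Z_n\|_2\le\|M_n\|_2$. Using that $\Omega$ is $\mathcal{O}(2)$-invariant, angular averaging commutes with $-\Delta$, so each eigenspace of $(-\Delta,\h)$ decomposes $L^2$-orthogonally into its (at most one-dimensional) radial part and a complement of functions with vanishing angular mean. One may therefore choose the $L^2$-orthonormal eigenbasis $\{\phi_j\}$ so that every $\phi_j$ is either radial or has zero angular average; then for a radial $u$, its Dirichlet-orthogonal $H_k$-component equals $\sum_{j\le k,\ \phi_j\ \mathrm{radial}}\big(\int_\Omega u\,\phi_j\,dx\big)\phi_j$, which is again radial. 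This is precisely the identification $H_{k,r}(\Omega)=\hr\cap H_k(\Omega)$ already used in Section \ref{sec3}, and it shows that $Z_n$ is nothing but the $L^2$-orthogonal projection of $M_n$ onto $H_{k,r}(\Omega)$; in particular $\|Z_n\|_2\le\|M_n\|_2$.

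With this in hand, (i) follows from the first inequality in \eqref{5.2} (which is \eqref{1.8}, i.e.\ $A_0=\lambda_k$) together with the explicit estimate $\|M_n\|_2^2\le\frac{1}{4\log n}$ recorded above:
\[
\|Z_n\|^2\le A_0\|Z_n\|_2^2\le A_0\|M_n\|_2^2\le\frac{A_0}{\log n},
\]
hence $\|W_n\|^2=1-\|Z_n\|^2\ge 1-\frac{A_0}{\log n}$. For (ii), the second inequality in \eqref{5.2} and $\|M_n\|_2\le\frac{B}{\sqrt{\log n}}$ give
\[
\|Z_n\|_\infty\le\frac{B_0}{B}\,\|Z_n\|_2\le\frac{B_0}{B}\,\|M_n\|_2\le\frac{B_0}{\sqrt{\log n}},
\]
and inserting $|Z_n(x)|\le\|Z_n\|_\infty$ into $W_n(x)=M_n(x)-Z_n(x)$ on the two regions $B_{1/n}(0)$ and $\Omega\setminus B_{1/n}(0)$ yields exactly the two stated bounds. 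I expect the main obstacle to be the middle step — justifying that the Dirichlet-orthogonal projection $P_k$ does not increase the $L^2$-norm of $M_n$; everything else is routine bookkeeping with the properties of $M_n$ and the finite-dimensional norm equivalences \eqref{5.2}.
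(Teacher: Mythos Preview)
Your proof is correct and follows essentially the same route as the paper's: both use the Pythagorean identity $\|M_n\|^2=\|W_n\|^2+\|Z_n\|^2$, the finite-dimensional estimate $\|Z_n\|^2\le A_0\|Z_n\|_2^2$ from \eqref{5.2}, the bound $\|Z_n\|_2\le\|M_n\|_2$, and then $\|Z_n\|_\infty\le\frac{B_0}{B}\|Z_n\|_2\le\frac{B_0}{\sqrt{\log n}}$ for part~(ii). The only difference is that the paper simply writes $\|M_n\|_2^2=\|P_kM_n\|_2^2+\|(I-P_k)M_n\|_2^2$ without comment, whereas you supply the justification (via the eigenbasis being chosen radial/zero-angular-mean) that the Dirichlet-orthogonal splitting $H_{k,r}\oplus H_{k,r}^\perp$ is also $L^2$-orthogonal---so your argument is in fact slightly more careful.
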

	\begin{proof} By simple computations, we have
		\begin{align}
			\|W_n\|^2 &= \|M_n\|^2 - \|(I-P_k)M_n\|^2 \ \text{ and } (I-P_k)M_n \in H_{k,r}(\Omega) \nonumber\\
			& \leq \|M_n\|^2 = 1. \nonumber 
		\end{align}
		On the other hand, from \eqref{5.2}, we have $\|(I-P_k)M_n\|^2 \leq A_0 \|(I-P_k)M_n\|^2_2$. So,
		\begin{align}
			\label{5.3}
			\|W_n\|^2 &\geq \|M_n\|^2 - A_0 \|(I-P_k)M_n\|_2^2 \nonumber\\
			&= 1 - A_0\|(I-P_k)M_n\|^2_2.
		\end{align}
		By simple calculations, we have
		\begin{align}
			\|M_n\|^2_2 & = \|P_k(M_n)+ (I-P_k)M_n\|^2_2\nonumber\\
			&= \|P_k(M_n)\|^2_2 + \|(I-P_k)M_n\|_2^2\nonumber\\
			& \geq \|(I-P_k)M_n\|^2_2 \nonumber.
		\end{align}
		Using this in \eqref{5.3}, we have
		\begin{align*}
			\|W_n\|^2 &\geq 1 - A_0\|M_n\|^2_2\nonumber\\
			& \geq 1 - A_0 \Bigg( \frac{1}{4\log{n}}- \frac{1}{4n^2\log{n}}- \frac{1}{2n^2}\Bigg) \nonumber\\
			& \geq 1 - \frac{A_0}{\log{n}}.
		\end{align*}
		This completes the proof of $(i)$.
		As $M_n \geq 0$ in $\Omega$ and $M_n = \frac{1}{\sqrt{2\pi}}\sqrt{\log{n}}$ in $|x|<\frac{1}{n}$, we have
		\begin{align*}
			W_n(x) &= M_n(x)- (I-P_k)(M_n(x))\\
			&\geq \left\{ \begin{aligned}
				&-\|(I-P_k)M_n\|_\infty , && \mbox{if} \ \Omega\setminus B_{\frac{1}{n}}(0) \\
				& \frac{1}{\sqrt{2\pi}}\sqrt{\log{n}} - \|(I-P_k)M_n\|_\infty, && \mbox{if}\  B_{\frac{1}{n}}(0)
			\end{aligned}\right.\\
			&\geq \left\{ \begin{aligned}
				&-\frac{B_0}{B}\|(I-P_k)M_n\|_2 , && \mbox{if} \  \Omega\setminus B_{\frac{1}{n}}(0)\\
				& \frac{1}{\sqrt{2\pi}}\sqrt{\log{n}} - \frac{B_0}{B}\|(I-P_k)M_n\|_2, && \mbox{if}\  B_{\frac{1}{n}}(0)
			\end{aligned}\right.
		\end{align*}	
		where we used \eqref{5.2}. Since $\|(I-P_k)M_n\|_2 \leq \|M_n\|_2$ and by the definition of $B$, i.e. $\|M_n\|_2 \leq \frac{B}{\sqrt{\log{n}}}$, we can get
		\begin{align*}
			W_n(x)	&\geq \left\{ \begin{aligned}
				&-\frac{B_0}{\sqrt{\log{n}}} , && \mbox{if} \ \Omega\setminus B_{\frac{1}{n}}(0)\\
				& \frac{1}{\sqrt{2\pi}}\sqrt{\log{n}} - \frac{B_0}{\sqrt{\log{n}}}, && \mbox{if}\  B_{\frac{1}{n}}(0)
			\end{aligned}\right.
		\end{align*}
	\end{proof}
	
	We define $z_n(x)=\frac{W_n(x)}{\|W_n(x)\|},\ \mathcal Q_n = \{ v+sz_n: v\in H_{k,r}(\Omega), \|v\|\leq R \text{ and } 0 \leq s \leq R, \text{ for some } R> \rho \}$ and the minimax level of $J$ as follows:
	\begin{equation}
		\label{5.4}
		0<c(n) =\inf_{\gamma \in \Gamma_n} \max_{w\in \mathcal Q_n} J(\gamma(w)),
	\end{equation}
	where $\Gamma_n = \{ \eta \in C(\overline{\mathcal Q}_n, H): \eta(w)=w \text{ if } w \in \partial \mathcal Q_n\}$.
	
	The following proposition is a crucial estimate for critical problems. To ensure compactness properties for the functional 
	$J$, the minimax value obtained from the linking geometry must not exceed a specific constant. Specifically, we can locate a critical point of $J$ at the level $c(n)$ if we establish that 
	$c(n)< \frac{(4-\mu)\pi}{2\alpha_0}\Big(1+\frac{2b_0}{4-\mu}\Big).$
	In Proposition \ref{prop5}, we have proved the minimax level of the mountain-pass type using the sequence $\{s_n\}$. However, in this case, we need to consider a sequence of the type $\{v_n+s_n z_n\}$, where $v_n \in H_{k,r}$. The presence of $v_n$ makes the arguments more complicated than in the previous case. Precisely, we have the following bound on $c(n)$.
	
	\begin{prop}
		Let $c(n)$ be given as in \eqref{5.4} and assumptions \eqref{H1}-\eqref{H4} hold. Then for some $n$, $c(n)< \frac{(4-\mu)\pi}{2\alpha_0}\Big(1+\frac{2b_0}{4-\mu}\Big).$
	\end{prop}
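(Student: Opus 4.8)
The plan is to adapt the argument of Proposition \ref{prop5} to the linking framework. Taking the identity map as a competitor in \eqref{5.4} gives $c(n)\le \max_{w\in\overline{\mathcal{Q}}_n}J(w)$, so it suffices to exhibit one $n$ for which $\max_{\overline{\mathcal{Q}}_n}J<\Theta$, where I abbreviate $\Theta:=\frac{(4-\mu)\pi}{2\alpha_0}\bigl(1+\frac{2b_0}{4-\mu}\bigr)$. Suppose, for contradiction, that $\max_{\overline{\mathcal{Q}}_n}J\ge\Theta$ for every $n$. Since $\overline{\mathcal{Q}}_n$ is a compact subset of the finite-dimensional space $H_{k,r}(\Omega)\oplus\R z_n$ and, by Proposition \ref{prop5.3}, $J\le 0<\Theta$ on $\partial\mathcal{Q}_n$, the maximum is attained at an interior point $w_n=v_n+s_n z_n$ with $v_n\in H_{k,r}(\Omega)$, $\|v_n\|\le R$ and $0\le s_n\le R$.

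Being an interior maximizer of $J$ over a subset of $H_{k,r}(\Omega)\oplus\R z_n$, the point $w_n$ satisfies $J'(w_n)\psi=0$ for all $\psi\in H_{k,r}(\Omega)$ and $J'(w_n)z_n=0$, hence $J'(w_n)w_n=0$. Since $\lambda>\lambda_k$, the characterization \eqref{1.8} yields $\|v_n\|^2-\lambda\|v_n\|_2^2\le 0$; combining this with $\|z_n\|=1$ and the $H^1_0$- and $L^2$-orthogonality of $v_n$ and $z_n$ gives $\|w_n\|^2-\lambda\|w_n\|_2^2\le s_n^2$. Inserting $\psi=w_n$ into $J'(w_n)w_n=0$ then produces
\[ s_n^2\ \ge\ \int_\Omega\Bigl(\int_\Omega\frac{Q(|y|)F(w_n(y))}{|x-y|^\mu}\,dy\Bigr)Q(|x|)f(w_n(x))\,w_n(x)\,dx, \]
while $F\ge 0$ and $J(w_n)\ge\Theta$ force, through the same quadratic estimate, $s_n^2\ge 2\Theta$. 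In particular $s_n$ stays in a fixed compact subinterval of $(0,\infty)$.

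Next I would bound the double integral from below by restricting to $B_{1/n}(0)\times B_{1/n}(0)$. By Lemma \ref{lem5.4}(ii) and $\|W_n\|\le 1$, one has $z_n\ge \frac{\sqrt{\log n}}{\sqrt{2\pi}}-\frac{B_0}{\sqrt{\log n}}$ on $B_{1/n}$, while $v_n$ lies in the finite-dimensional space $H_{k,r}(\Omega)$, so $\|v_n\|_\infty\le C\|v_n\|\le CR$ by \eqref{5.2}; hence $w_n\ge t_n:=s_n\bigl(\frac{\sqrt{\log n}}{\sqrt{2\pi}}-\frac{B_0}{\sqrt{\log n}}\bigr)-CR$ on $B_{1/n}$, and $t_n\to+\infty$. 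Using $F(s)f(s)s\ge M_0^{-1}(\beta_0-\varepsilon)^2 s^{v+1}e^{2\alpha_0 s^2}$ for $s$ large (a consequence of \eqref{d} and \eqref{H4}, exactly as in Proposition \ref{prop5}), the monotonicity of $s\mapsto s^{v+1}e^{2\alpha_0 s^2}$, the lower bound $Q(|x|)\ge c_0|x|^{b_0}$ near the origin coming from \eqref{Q}, and the kernel estimate $\int_{B_{1/n}}\int_{B_{1/n}}\frac{|x|^{b_0}|y|^{b_0}}{|x-y|^\mu}\,dx\,dy\ge C_1 n^{-(4+2b_0-\mu)}$, I would arrive at $s_n^2\ge C\,t_n^{v+1}\,e^{2\alpha_0 t_n^2}\,n^{-(4+2b_0-\mu)}$. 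Passing to a subsequence $s_n\to s_0\ge\sqrt{2\Theta}$: if $s_0^2>2\Theta$ the exponent $\frac{\alpha_0 s_n^2}{\pi}-(4+2b_0-\mu)$ of $n$ stays bounded away from $0$ and the right-hand side diverges, which is impossible; so $s_0^2=2\Theta$, and re-examining the inequality with $t_n^2=\frac{s_n^2\log n}{2\pi}(1+o(1))$ the surviving factor is of the order $(\log n)^{(v+1)/2}$, which again forces divergence against $s_n^2\to s_0^2$ finite — a contradiction. Hence $\max_{\overline{\mathcal{Q}}_n}J<\Theta$ for some $n$, i.e. $c(n)<\Theta$.

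The step I expect to be the main obstacle is precisely the one where the component $v_n\in H_{k,r}(\Omega)$ enters the lower bound for $w_n$ on $B_{1/n}$. In Proposition \ref{prop5} the test family is one-dimensional and $s M_n\equiv s\sqrt{\log n}/\sqrt{2\pi}$ is constant on $B_{1/n}$, so $t_n^2$ equals $\frac{s_n^2\log n}{2\pi}$ exactly and the exponent bookkeeping is immediate; here $w_n$ is only controlled on $B_{1/n}$ up to an $O(1)$ error coming from $v_n$, and one must show that this error is negligible at the scale $\sqrt{\log n}$ so that $t_n^2=\frac{s_n^2\log n}{2\pi}(1+o(1))$ in a way strong enough that the same counting closes the contradiction. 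The relevant tools are the equivalence of norms on $H_{k,r}(\Omega)$ together with the quantitative estimates of Lemma \ref{lem5.4} and \eqref{5.2}, the sign information $\|v_n\|^2-\lambda\|v_n\|_2^2\le 0$, and the fact that $s_n$ remains bounded and bounded away from $0$. Carefully controlling this interaction term, rather than the exponential growth itself, is where the genuine work of the proposition lies.
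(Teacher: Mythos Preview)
Your setup is correct and cleaner than the paper's in one respect: you use $\|v_n\|\le R$ and $s_n\le R$ directly from $w_n\in\mathcal{Q}_n$, and your inequality $\|w_n\|^2-\lambda\|w_n\|_2^2\le s_n^2$ (via $\lambda>\lambda_k$ and orthogonality) yielding both $s_n^2\ge 2\Theta$ and $s_n^2\ge\int\!\!\int\ldots f(w_n)w_n$ is valid. The gap is exactly where you flagged it, but it is fatal for the argument as written. With $\|v_n\|_\infty\le CR$ you only get $t_n=\frac{s_n\sqrt{\log n}}{\sqrt{2\pi}}-O(1)$, hence
\[
2\alpha_0 t_n^2=\frac{\alpha_0 s_n^2}{\pi}\log n-\kappa\sqrt{\log n}+O(1),\qquad \kappa=\frac{4\alpha_0 s_n\,CR}{\sqrt{2\pi}}>0,
\]
so when $s_n^2\to 2\Theta$ the right-hand side of your key inequality behaves like $(\log n)^{(v+1)/2}e^{-\kappa\sqrt{\log n}}\to 0$, and no contradiction follows. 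Your claim that ``the surviving factor is of order $(\log n)^{(v+1)/2}$'' is precisely where this $e^{-\kappa\sqrt{\log n}}$ has been dropped; the $O(1)$ error is \emph{not} negligible at the scale needed.

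The paper closes this gap by an extra step you are missing: it shows $v_n\to 0$ in $H_{k,r}(\Omega)$. Using the convergence Lemma~\ref{lem3.2} (Remark~\ref{rem3.1}) for the nonlocal $F$-term together with $u_n\to v_0$ a.e.\ and $\|z_n\|_2\to 0$, one passes to the limit in $J(w_n)\ge\Theta$ to obtain $J(v_0)+\tfrac{s_0^2}{2}\ge\Theta$; since $v_0\in H_{k,r}(\Omega)$ and $\lambda>\lambda_k$ force $J(v_0)\le 0$, this yields $s_0^2\ge 2\Theta$, and combined with the exponential estimate one gets $s_0^2=2\Theta$ and then $J(v_0)=0$, which (again by $\lambda>\lambda_k$ and $F\ge 0$) forces $v_0=0$. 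Only after knowing $\|v_n\|_\infty\to 0$ can one absorb the $v_n$-contribution into a parameter $\delta=\delta_n\to 0$ in the lower bound $u_n\ge\frac{s_n\sqrt{\log n}}{\sqrt{2\pi}}(1-\delta_n)$ on $B_{1/n}$ and rerun the exponent bookkeeping to reach the contradiction. Your listed ``relevant tools'' (norm equivalence on $H_{k,r}$, Lemma~\ref{lem5.4}, \eqref{5.2}, the sign of $\|v_n\|^2-\lambda\|v_n\|_2^2$) do not by themselves give $v_n\to 0$; the compactness input from Lemma~\ref{lem3.2} is essential.
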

	\begin{proof}Based on the definition of $c(n)$ and $id\in \Gamma_n$, it suffices to prove that $\max\{ J(v+sz_n): v \in H_{k,r}(\Omega), \|v\| \leq R, 0 \leq s \leq R\} < \frac{(4-\mu)\pi}{2\alpha_0}\Big(1+\frac{2b_0}{4-\mu}\Big)$. 
		On the contrary, let us assume this condition is not satisfied. Then for all $n \in \mathbb{N}$
		\[ \max\{ J(v+sz_n): v\in H_{k,r}, \|v\| \leq R, 0 \leq s \leq R\} \geq \frac{(4-\mu)\pi}{2\alpha_0}\Big(1+\frac{2b_0}{4-\mu}\Big).\]
		But note that for $s \geq R$, we have $J(v+sz_n)\leq0$. Hence, due to the compactness of $H_{k,r} \cap \overline{B_R}$, for each $n$, there exist $s_n>0$ and $v_n \in H_{k,r}$ such that
		\[ J(v_n+s_n z_n) = \max\{ J(v+sz_n): v\in H_{k,r}, \|v\| \leq R, 0 \leq s \leq R\}.\]
		This further implies that
		\begin{equation}
			\label{5.5}
			J(v_n+s_nz_n) \geq \frac{(4-\mu)\pi}{2\alpha_0}\Big(1+\frac{2b_0}{4-\mu}\Big).
		\end{equation}
		Let $u_n = v_n+s_nz_n$, then $J'(u_n)=0$ and we get
		\begin{equation}
			\label{5.6}
			\|u_n\|^2 -\lambda \|u_n\|^2_2 - \int\limits_\Omega \Bigg( \int\limits_\Omega \frac{Q(|y|)F(u_n(y))}{|x-y|^\mu}dy\Bigg) Q(|x|)f(u_n(x))u_n(x)dx =0.
		\end{equation}
		We complete the proof in the following two steps:\\
		Step 1. In this step, we prove that $\{v_n\}$ and $\{s_n\}$ are bounded sequences. 
		
		We have the following two distinct possibilities:
		\begin{enumerate}
			\item [(i)] $\frac{s_n}{\|v_n\|} \geq C_0,$ for some $C_0>0$ uniformly in $n$.
			\item [(ii)] $\frac{s_n}{\|v_n\|} \rightarrow 0$ in $\R$, up to a subsequence as $n \rightarrow \infty$. 
		\end{enumerate}
		Suppose $(i)$ holds. Then, the boundedness of $\{s_n\}$ implies the boundedness of $\{v_n\}$ as $\|v_n\| \leq \frac{s_n}{C_0}.$ Thus, it is sufficient to prove that the sequence $\{s_n\}$ is bounded. Condition $(i)$ implies that there exists a constant $C>0$ such that
		\begin{equation}
			\|u_n\| = \|v_n+s_nz_n\| \leq \|v_n\|+s_n\|z_n\| \leq \frac{s_n}{C_0} + s_n \leq \sqrt{C}s_n. \nonumber
		\end{equation}
		Using this in equation \eqref{5.6}, we get
		\begin{align}
			Cs_n^2 &\geq \int\limits\limits_\Omega \Big( \int\limits\limits_\Omega \frac{Q(|y|)F(u_n(y))}{|x-y|^\mu}dy\Big)Q(|x|)f(u_n(x)) u_n(x)dx \nonumber\\
			& \geq \int\limits\limits_{B_{\frac{1}{n}}} \Big( \int\limits\limits_{B_\frac{1}{n}} \frac{Q(|y|)F(u_n(y))}{|x-y|^\mu}dy\Big) Q(|x|)f(u_n(x))u_n(x)dx. \nonumber
		\end{align}
		The assumption \eqref{d} implies that, there exists $M_0>0$ and $s_0>0$ such that $f(s)s \geq M_0^{-1}s^{v+1} F(s)$ for all $s\geq s_0$. From \eqref{H4}, there exists $\varepsilon \in (0, \beta_0)$ such that $F(s) \geq (\beta_0-\varepsilon)e^{\alpha_0 s^2}$ for all $s \geq s_\varepsilon$. Choosing $\overline{s} = \max\{s_0, s_\varepsilon\}$, we have
		\begin{align}
			Cs_n^2 
			& \geq \frac{ (\beta_0-\varepsilon)^2}{M_0}\int\limits\limits_{B_{\frac{1}{n}} \cap \{ |u_n| \geq \overline{s}\}} \Big( \int\limits\limits_{B_\frac{1}{n} \cap \{ |u_n| \geq \overline{s}\}} \frac{|y|^{b_0}e^{\alpha_0 |u_n(y)|^2}}{|x-y|^\mu} dy\Big) |x|^{b_0}|u_n(x)|^{v+1} e^{\alpha_0 |u_n(x)|^2}dx. \label{5.7}
		\end{align}
		To estimate the above integral, we use Lemma \ref{lem5.4} $(i)$, $\|W_n(x)\|^2 \leq 1$ and from $(ii)$, in $B_{\frac{1}{n}}$ for $n$ large enough, we have
		\begin{align*}
			u_n(x) &= v_n(x) + s_n z_n(x) = v_n(x) + s_n \frac{W_n(x)}{\|W_n(x)\|}\\
			& \geq s_n \Big( \frac{1}{\sqrt{2\pi}}\sqrt{\log{n}} - \frac{B_0}{\log{n}}\Big) \Bigg( \frac{v_n(x)}{s_n\Big(\frac{1}{\sqrt{2\pi}}\sqrt{\log{n}}- \frac{B_0}{\log{n}}\Big)} + 1\Bigg)\\
			& \geq s_n \Big( \frac{1}{\sqrt{2\pi}}\sqrt{\log{n}} - \frac{B_0}{\log{n}}\Big) \Bigg( 1- \frac{ \|v_n\|_\infty}{s_n\Big(\frac{1}{\sqrt{2\pi}}\sqrt{\log{n}}- \frac{B_0}{\log{n}}\Big)} \Bigg).
		\end{align*}
		Since $H_{k,r}$ is a finite dimensional subspace, then there is a $C>0$ satisfying $\|v\|_\infty \leq C \|v\|$ for all $v\in H_{k,r}$. This implies that there exist $\delta', \delta'' \in (0,1)$, such that 
		\begin{align*}
			u_n(x)& \geq s_n \Big( \frac{1}{\sqrt{2\pi}}\sqrt{\log{n}} - \frac{B_0}{\log{n}}\Big)\Bigg( 1- \frac{ C\|v_n\|}{s_n\Big(\frac{1}{\sqrt{2\pi}}\sqrt{\log{n}}- \frac{B_0}{\log{n}}\Big)} \Bigg)\\
			& \geq s_n \Big( \frac{1}{\sqrt{2\pi}}\sqrt{\log{n}} - \frac{B_0}{\log{n}}\Big) \Bigg( 1- \frac{ C}{C_0\Big(\frac{1}{\sqrt{2\pi}}\sqrt{\log{n}}- \frac{B_0}{\log{n}}\Big)} \Bigg)\\
			& \geq \frac{s_n \sqrt{\log{n}}}{\sqrt{2\pi}} \Bigg( 1- \frac{ C}{C_0\Big(\frac{1}{\sqrt{2\pi}}\sqrt{\log{n}}- \frac{B_0}{\log{n}}\Big)} \Bigg)- \frac{B_0}{\log{n}} \Bigg( 1- \frac{ C}{C_0\Big(\frac{1}{\sqrt{2\pi}}\sqrt{\log{n}}- \frac{B_0}{\log{n}}\Big)} \Bigg)\\
			& \geq \frac{s_n \sqrt{\log{n}}}{\sqrt{2\pi}} (1-\delta') - \delta''.
		\end{align*}
		Choosing $ \delta \in (\delta', 1)$ such that $\frac{s_n \sqrt{\log{n}}}{\sqrt{2\pi}} (\delta-\delta') > \delta''$ for $n$ is large enough, then we have
		\begin{align*}
			u_n(x) &\geq \frac{s_n \sqrt{\log{n}}}{\sqrt{2\pi}} (1-\delta) + \frac{s_n \sqrt{\log{n}}}{\sqrt{2\pi}} (\delta-\delta') - \delta''\\
			& \geq \frac{s_n \sqrt{\log{n}}}{\sqrt{2\pi}} (1-\delta).
		\end{align*}
		
		Since $\frac{s_n \sqrt{\log{n}}}{\sqrt{2\pi}}( 1- \delta)$ is arbitrary large, we can take $\frac{s_n \sqrt{\log{n}}}{\sqrt{2\pi}}( 1- \delta) > \overline{s}$ for large $n$. Therefore, $B_{\frac{1}{n}} \subset \Big\{|u_n| \geq \frac{s_n \sqrt{\log{n}}}{\sqrt{2\pi}} ( 1- \delta) \Big\}$.
		From equation \eqref{5.7},
		\begin{align}
			Cs_n^2 &\geq \frac{ (\beta_0-\varepsilon)^2}{M_0 n^{2b_0}}\int\limits\limits_{B_{\frac{1}{n}}} \Big( \int\limits\limits_{B_\frac{1}{n}} \frac{e^{\frac{\alpha_0(1-\delta)^2 s_n^2 \log{n}}{2\pi}}}{|x-y|^\mu} dy\Big) \Big( \frac{s_n \sqrt{\log{n}}}{\sqrt{2\pi}} (1-\delta)\Big)^{v+1} e^{\frac{\alpha_0(1-\delta)^2 s_n^2 \log{n}}{2\pi}}dx \nonumber\\
			&\geq \frac{(\beta_0-\varepsilon)^2 e^{\frac{\alpha_0(1-\delta)^2 s_n^2 \log{n}}{\pi}}}{M_0 n^{2b_0}}\Big( \frac{s_n \sqrt{\log{n}}}{\sqrt{2\pi}} (1-\delta)\Big)^{v+1} \int\limits\limits_{B_{\frac{1}{n}}} \Big( \int\limits\limits_{B_\frac{1}{n}} \frac{1}{|x-y|^\mu} dy\Big) dx\nonumber  \\
			&\geq \frac{4\pi^2}{(2-\mu)(3-\mu)(4-\mu)} M_0^{-1} (\beta_0-\varepsilon)^2 e^{\frac{\alpha_0(1-\delta)^2 s_n^2 \log{n}}{\pi}}	\Big( \frac{s_n \sqrt{\log{n}}}{\sqrt{2\pi}} (1-\delta)\Big)^{v+1} n^{\mu-2b_0-4}. \nonumber
		\end{align}
		We get a constant $C_0= C_0(\mu, M_0, \overline{s}, \beta_0, \varepsilon)>0$ satisfying
		\begin{align}
			& s_n^2\geq C_0 (1-\delta)^{v+1} s_n^{v+1} e^{[\alpha_0 (1-\delta)^2 s_n^2 \pi^{-1}-(4+2b_0-\mu)]\log{n} + \frac{v+1}{2} \log{\log{n}}}. \label{5.8}
		\end{align}
		Since $ \frac{v+1}{2} \log{\log{n}} >0$, then
		\begin{equation}
			\label{5.9}
			s_n^2 \geq C_0 (1-\delta)^{v+1} s_n^{v+1} e^{[\alpha_0 (1-\delta)^2 s_n^2 \pi^{-1}-(4+2b_0-\mu)]\log{n}}. 
		\end{equation}
		By simple computations, we have 
		\begin{align}
			\label{5.10}
			\frac{(1-v)\log{s_n}}{s_n^2} \geq & \frac{\log{C_0(1-\delta)^{v+1}}}{s_n^2} + \left[\alpha_0 (1-\delta)^2 \pi^{-1}-\frac{(4+2b_0-\mu)}{s_n^2} \right]\log{n}.
		\end{align}
		If $s_n \rightarrow \infty$ in \eqref{5.10}, then we obtain a contradiction. Hence $\{s_n\}$ is a bounded sequence, so is $\{v_n\}$.
		
		Next, we assume that $(ii)$ occurs. Then, $s_n\leq \|v_n\|$, this gives that $\|u_n\|= \|v_n+ s_n z_n\| \leq \|v_n\|+ s_n \|z_n\|\leq 2 \|v_n\|$. This implis that if the sequence $\{v_n\}$ is bounded in $\hr$, then the sequence $\{s_n\}$ is bounded in $\R$. Therefore, our goal is to prove that $\{v_n\}$ is bounded in $\hr$. Let us assume that this is not true. Then, up to a subsequence $\|v_n\| \rightarrow \infty$. Let  $d= \sup\{ |x-y|: x,y \in \Omega\}$ denotes diameter of $\Omega$. From \eqref{5.6}, we have
		\begin{align}
			1 &\geq \frac{1}{\|u_n\|^2}\int\limits\limits_\Omega \Big( \int\limits\limits_\Omega \frac{Q(|y|)F(u_n(y))}{|x-y|^\mu}dy\Big) Q(|x|)f(u_n(x))u_n(x)\ dx \nonumber\\
			& \geq \frac{1}{4d^\mu \|v_n\|^2} \int\limits\limits_{ B_{\frac{1}{n}}} \Big( \int\limits\limits_{ B_{\frac{1}{n}}} |y|^{b_0}F(u_n(y))dy\Big) |x|^{b_0}f(u_n(x))u_n(x)dx. \nonumber 
		\end{align}
		Using \eqref{d} and \eqref{H4}, we can find $\varepsilon \in (0, \beta_0)$ and $\overline{s} = \max\{s_0, s_\varepsilon\}>0$ such that
		\begin{align}
			1 & \geq \frac{\overline{s}^{v+1}(\beta_0 - \varepsilon)^2}{4M_0d^\mu n^{2b_0} \|v_n\|^2} \int\limits\limits_{ B_{\frac{1}{n}} \cap \{|u_n| \geq \overline{s}\}} \Big( \int\limits\limits_{ B_{\frac{1}{n}} \cap \{|u_n| \geq \overline{s}\}} e^{\alpha_0 u_n^2(y)}dy\Big) e^{\alpha_0 u_n^2(x)}\ dx. \label{5.11}
		\end{align}
		Note that,
		\[ \frac{u_n}{\|v_n\|} \chi_{ B_{\frac{1}{n}} \cap \{|u_n| \geq \overline{s}\}} = \frac{v_n}{\|v_n\|} + \frac{s_n}{\|v_n\|}z_n - \frac{u_n}{\|v_n\|} \chi_{ B_{\frac{1}{n}} \cap \{|u_n| < \overline{s}\}}.\]
		Since $\frac{s_n}{\|v_n\|} \rightarrow 0$ in $\R$, $z_n \rightarrow 0$ a.e. in $\Omega$ and $\frac{u_n}{\|v_n\|} \rightarrow 0$ in $ B_{\frac{1}{n}} \cap \{|u_n| < \overline{s}\}$, then there exists $v_0 \in H_{k,r}(\Omega)$ such that 
		\begin{equation*}
			\frac{u_n(x)}{\|v_n\|} \chi_{ B_{\frac{1}{n}} \cap \{ |u_n| \geq \overline{s}\}} \rightarrow \overline{v} \text{ a.e. in } \Omega,
		\end{equation*}
		with $\frac{v_n}{\|v_n\|} \rightarrow \overline{v}$ and $\|\overline{v}\|=1$. From \eqref{5.11} and $\|v_n\| \to \infty$ as $n\to \infty$, 
		\begin{equation*}
			1\geq \frac{\overline{s}^{v+1}(\beta_0 - \varepsilon)^2}{4M_0d^\mu n^{2b_0}\|v_n\|^2} \int\limits\limits_\Omega \int\limits\limits_\Omega e^{\alpha_0 \|v_n\|^2 \Bigg( \frac{u_n(x)}{\|v_n\|} \chi_{\Omega \cap \{ |u_n| \geq \overline{s}\}}\Bigg)^2} e^{\alpha_0 \|v_n\|^2 \Bigg( \frac{u_n(x)}{\|v_n\|} \chi_{\Omega \cap \{ |u_n| \geq \overline{s}\}}\Bigg)^2}dx dy\rightarrow \infty, 
		\end{equation*}
		we get a contradiction. Hence, both $\{s_n\}$ and $\{v_n\}$ are bounded. \\
		Step 2. As we have proved that $\{s_n\}$ and $\{v_n\}$ are bounded, we can assume that up to a subsequence, there exists $v_0 \in \hr$ and $s_0\in \R$ such that $v_n \rightarrow v_0$ and $s_n \rightarrow s_0$. In this step, we prove that $v_0=0$ and $s_0^2 = \frac{(4-\mu)\pi}{\alpha_0}\Big( 1+\frac{2b_0}{4-\mu}\Big)$.
		
		Observe that since $M_n \rightharpoonup 0$, we have $ z_n \rightharpoonup 0$ and this implies $z_n \to 0$ a.e. in $\Omega$. Therefore, 
		\[u_n=v_n+s_n z_n \to v_0 \text{ a.e. in } \Omega.\]
		From \eqref{5.6} and Lemma \ref{lem3.1}, we have
		\begin{align*} \int\limits\limits_\Omega \Big( \int\limits\limits_\Omega \frac{Q(|y|)F(u_n(y))}{|x-y|^\mu}dy \Big) Q(|x|)f(u_n(x))u_n(x)dx = \|u_n\|^2 - \lambda\|u_n\|^2_2\leq  \|u_n\|^2 \leq C.
		\end{align*}
		By Remark \ref{rem3.1}, we have 
		\begin{equation} 
			\label{5.12}
			\int\limits\limits_\Omega \Big( \int\limits\limits_\Omega \frac{Q(|y|)F(u_n(y))}{|x-y|^\mu}dy \Big) Q(|x|)F(u_n(x)) dx \rightarrow \int\limits\limits_\Omega \Big( \int\limits\limits_\Omega \frac{Q(|y|)F(v_0(y))}{|x-y|^\mu}dy \Big) Q(|x|)F(v_0(x))dx.
		\end{equation}
		Since $v_n \rightarrow v_0$ in $\hr$, $\|z_n\|_2 \rightarrow 0$ and $s_n\to s_0$. Therefore, from \eqref{5.5}, \eqref{5.12} and the characterization of $\lambda_k$, we obtain
		\begin{align*}
			\frac{(4-\mu)\pi}{2\alpha_0} \Big( 1+\frac{2b_0}{4-\mu}\Big)&\leq \frac{1}{2} \|v_n\|^2 + \frac{s_n^2}{2} - \frac{\lambda}{2} \|v_n\|^2_2+ \frac{\lambda}{2} s_n^2 \|z_n\|_2^2\\
			&\quad- \frac{1}{2} \int\limits\limits_\Omega \Bigg( \int\limits\limits_\Omega \frac{Q(|y|)F(v_0(y))}{|x-y|^\mu}dy \Bigg) Q(|x|)F(v_0(x))dx\\
			& = \frac{1}{2} \|v_0\|^2 + \frac{s_0^2}{2} - \frac{\lambda}{2} \|v_0\|^2_2 \\
			&\quad- \frac{1}{2} \int\limits\limits_\Omega \Bigg( \int\limits\limits_\Omega \frac{Q(|y|)F(v_0(y))}{|x-y|^\mu}dy \Bigg) Q(|x|)F(v_0(x))dx.
		\end{align*}
		By the definition of the functional $J$ given in \eqref{J}, we have
		\begin{align}
			\label{6.1}
			J(v_0) + \frac{s_0^2}{2} \geq 	\frac{(4-\mu)\pi}{2\alpha_0} \Big( 1+\frac{2b_0}{4-\mu}\Big).
		\end{align}
		Since $J(v_0) \leq 0$ for all $v_0 \in H_{k,r}$, from Proposition \ref{5.3}. We have  $s_0^2 \geq \frac{(4-\mu)\pi}{\alpha_0} \Big( 1+\frac{2b_0}{4-\mu}\Big)$ and we know that $\|v_n\|\leq C$, for some $C>0$, this implies that $(ii)$ is not possible. Hence only $(i)$ is true, then from \eqref{5.9}
		\[ s_n^2 \geq C_0(1-\delta)^{v+1} s_n^{v+1} e^{[\alpha_0 (1-\delta)^2 s_n^2 \pi^{-1}-(4+2b_0-\mu)]\log{n}}.\]
		This implies that $s_0^2 \leq \frac{(4-\mu)\pi}{\alpha_0 (1-\delta)^2} \Big(1+\frac{2b_0}{4-\mu}\Big)$, otherwise we get a contradiction as $n\to \infty$. Taking $\delta$ very small close to 0, then we have $s_0^2 \leq \frac{(4-\mu)\pi}{\alpha_0}\Big(1+\frac{2b_0}{4-\mu}\Big)$. Hence $s_0^2 = \frac{(4-\mu)\pi}{\alpha_0}\Big(1+\frac{2b_0}{4-\mu}\Big)$.\\
		Next, we show that $v_0 \equiv 0$. From \eqref{6.1}, $J(v_0) \geq 0$ but $J(v_0) \leq 0$ for all $v_0 \in H_{k,r}$. Thus, $J(v_0)=0$. 
		
		By the characterization of $\lambda_k$, we have
		\begin{align*}
			0= J(v_0) & \leq \frac{1}{2} \Big(1 - \frac{\lambda}{\lambda_k}\Big)\|v_0\|^2 - \int\limits\limits_\Omega \Bigg( \int\limits\limits_\Omega \frac{Q(|y|)F(v_0(y))}{|x-y|^\mu}dy\Bigg) Q(|x|)F(v_0(x))dx.
		\end{align*}
		Using assumption \eqref{H1} and \eqref{AR}, we have $F(s)\geq 0$ for all $s\geq 0$. This together with $\lambda > \lambda_k$, we get
		\begin{align*}
			0 \leq \frac{1}{2} \Big(1 - \frac{\lambda}{\lambda_k}\Big)\|v_0\|^2 <0,
		\end{align*}
		Hence, $\|v_0\|=0$. This completes our claim.
		
		To complete the proof, note that from Step 2, up to a subsequence, we have $v_n \rightarrow 0$ in $H_{k,r}$ and $s_n \rightarrow s_0$. Then from equation \eqref{5.8}, letting $\delta \rightarrow 0^+$,
		\begin{align*}
			s_n^2 \geq C_0 s_n^{v+1}e^{[ \alpha_0 s_n^2 \pi^{-1} - (4+2b_0-\mu)]\log{n} + \frac{v+1}{2}\log{\log{n}}}.
		\end{align*}
		Now taking $n\rightarrow \infty$, then we get a contradiction. This completes the proof of the proposition.
	\end{proof}
	\begin{proof}[\bf Proof of Theorem \ref{thm1.1}] The Propositions \ref{prop5.2} and \ref{prop5.3} are used to derive the proofs of Theorem \ref{thm5.1}-$(i)$ and $(ii)$ satisfied by the functional $J$ in $\hr$. Consequently, there exists a Palais-Smale sequence for $J$ at level $c$ in $\hr$. From Lemma \ref{lem3.1}, there exists $u_0 \in \hr$ such that, up to a subsequence, $u_n \rightharpoonup u_0$ in $\hr$. Since $c>0$, by Lemma \ref{lem4.6}, $u_0$ is nontrivial. This fact, together with Lemma \ref{lemma3.4}, implies $J'(u_0)=0$. According to Lemma \ref{lem4.7}, $J$ satisfies Theorem \ref{thm5.1}-(iii), i.e. $J(u_0)=c$ with $c < \frac{(4-\mu)\pi}{2\alpha_0}\Big( 1+\frac{2b_0}{4-\mu}\Big)$. Hence, Theorem \ref{thm5.1} implies that $u_0$ is a critical point of the functional $J$ in $\hr$. By using Theorem \ref{thm1.3} (we refer to Appendix for the proof), we conclude that $u_0$ is a critical point of $J$ in $\h$. So, we finish the proof of Theorem \ref{thm1.1}.
	\end{proof}
	\section*{Appendix}
	In this section, we show that the functional $J$ is $\mathcal{O}(2)$-invariant, where $\mathcal{O}(2)$ is defined in Section \ref{sec1}. For this, we need to assume that $\Omega$ is invariant with respect to $\mathcal{O}(2)$, then using change of variable and $|det (h^{-1})|=1$, we have
	\begin{align*}
		J(hu)  & = \int\limits_\Omega \int\limits_\Omega \frac{ Q(|y|) Q(|x|) F(hu(y)) F(hu(x))}{|x-y|^\mu}dy dx\\
		&=\int\limits_\Omega \int\limits_\Omega \frac{ Q(|y|) Q(|x|) F(u(h^{-1}y)) F(u(h^{-1}x))}{|x-y|^\mu}dy dx\\
		&= \frac{1}{|det (h^{-1})|^2} \int\limits_{H(\Omega)} \int\limits_{H(\Omega)} \frac{ Q(|h(y)|) Q(|h(x)|) F(u(y)) F(u(x))}{|h(x)-h(y)|^\mu}dy dx\\
		&=\int\limits_{\Omega} \int\limits_{\Omega} \frac{ Q(|h(y)|) Q(|h(x)|) F(u(y)) F(u(x))}{|h(x)-h(y)|^\mu}dy dx.
	\end{align*}
	Since $h$ is linear and it is isometry, then
	\[ |h(x)-h(y)|= |x-y| \text{ and } |h(x)| = |x|\]
	From the above two inequality, we conclude that 
	\begin{align*}
		J(hu) =\int\limits_{\Omega} \int\limits_{\Omega} \frac{ Q(|y)|) Q(|x)|) F(u(y)) F(u(x))}{|x-y|^\mu}dy dx = J(u).
	\end{align*}
	Hence $J$ is $\mathcal{O}(2)$-invariant, so by the principle of symmetric criticality, $u$ is a critical point of $J$ in $\h$, i.e. $u$ is a weak solution of \eqref{1.1}.

	\section*{Acknowlegement} 
	The first author acknowledges the financial aid from CSIR, Government of India, File No. 09/1237(15789)/2022-EMR-I. The research of the second author is supported by the Science and Engineering Research Board, Government of India, Grant No. MTR/2022/000495.
	
\end{document}